\newtheorem{thm}{Theorem}[section]
\newtheorem{definition}[thm]{Definition}
\newtheorem{lemma}[thm]{Lemma}
\newtheorem{remark}[thm]{Remark}
\newtheorem{assumption}{Assumption}
\newcommand{\dd}{\textrm{d}}
\newcommand{\Diff}{\textup{D}}
\newcommand{\Lop}{\textup{L}}
\newcommand{\N}{\textup{N}}
\newcommand{\M}{\textup{M}}
\newcommand{\eps}{\varepsilon}
\newcommand{\R}{\mathbb{R}}
\newcommand{\real}{\textrm{Re}\,}
\newcommand{\imag}{\textrm{Im}\,}
\newcommand{\transpose}{\textnormal{T}}
\newcommand{\me}{\mathrm{e}}
	\title{A Formal Geometric Blow-up Method\\for Pattern Forming Systems}
	\author{S.~Jelbart\footnote{Corresponding author. Email: jelbart@ma.tum.de}$^{,\dagger}$ \ \& C.~Kuehn\footnote{Department of Mathematics, Technical University of Munich, Garching near Munich, Bavaria 85748, Germany. SJ and CK acknowledge funding from the SFB/TRR 109 Discretization and Geometry in Dynamics grant. CK was additionally supported by Lichtenberg Professorship of the VolkswagenStiftung.}}
\begin{document}
	
	\maketitle
	
	\begin{abstract}
		We extend and apply a recently developed approach to the study of \textit{dynamic bifurcations} in PDEs based on the geometric blow-up method. We show that this approach, which has so far only been applied to study a dynamic Turing bifurcation in a cubic Swift-Hohenberg equation, can be coupled with a fast-slow extension of the \textit{method of multiple scales}. This leads to a formal but systematic method, which can be viewed as a fast-slow generalisation of the formal part of classical modulation theory. We demonstrate the utility and versatility of this method by using it to derive \textit{modulation equations}, i.e.~simpler closed form equations which govern the dynamics of the formal approximations near the underlying bifurcation point, in the context of model equations with dynamic bifurcations of (i) Turing, (ii) Hopf, (iii) Turing-Hopf, and (iv) stationary long-wave type. The modulation equations have a familiar form: They are of real Ginzburg-Landau (GL), complex GL, coupled complex GL and Cahn-Hilliard type respectively. In contrast to the modulation equations derived in classical modulation theory, however, they have time-dependent coefficients induced by the slow parameter drift, they depend on spatial and temporal scales which scale in a dependent and non-trivial way, and the geometry of the space in which they are posed is non-trivial due to the blow-up transformation. The formal derivation of the modulation equations provides the first steps toward the rigorous treatment of these challenging problems, which remains for future work.
		%
	\end{abstract}

	\bigskip

	\noindent {\small \textbf{Keywords:} Geometric blow-up, Modulation theory, Amplitude equations, Singular perturbation theory, Ginzburg-Landau equation.}
	
	\noindent {\small \textbf{MSC2020:} Primary 35B25, 35B32, 35B36; Secondary 37L99, 37G10}
	
	\section{Introduction}
	\label{sec:introduction}
	
	The goal of this article is to demonstrate the utility and versatility of the geometric blow-up method as a means for studying dynamic bifurcations in PDEs. We shall focus on the description of `essentially PDE' phenomena, with a particular emphasis on bifurcations associated to the emergence of spatial, temporal and spatio-temporal patterns in model problems which are obtained as `fast-slow extensions' of static bifurcation problems where the control parameter is allowed to vary slowly in time.
	
	From a mathematical point of view, all of the model problems we consider can be written in the general form
	\begin{equation}
		\label{eq:general_pde_intro}
		\partial_t \bm u = \left(\M + \Lop \right) \bm u + \N(\bm u, \mu, \eps) , \qquad
		\dot \mu = \eps ,
	\end{equation}
	where $t \geq 0$, $\bm u = \bm u(\bm x,t) \in \R^N$, $\M$ is a real-valued $N \times N$ constant coefficient matrix, $\Lop$ is a differential operator, $\N$ is a (sufficiently smooth) nonlinear operator, $\mu = \mu(t) \in \R$, $\dot \mu = \dd \mu / \dd t$ and $\eps \in [0,\eps_0]$ is a small perturbation parameter. We assume that the spatial domain $\Omega \subseteq \R^n$ is unbounded in at least one direction, which is a common assumption in pattern forming problems for which the characteristic (spatial, temporal or spatio-temporal) wavelength of the pattern is small in comparison to the length scale of the domain. System \eqref{eq:general_pde_intro} can be viewed as a `fast-slow extension' of a parameter-dependent PDE because the slow variable $\mu(t)$ is frozen in the limiting problem \eqref{eq:general_pde_intro}$|_{\eps = 0}$, referred to herein as the \textit{static subsystem}, and can therefore be treated as a control parameter. We are primarily interested in systems \eqref{eq:general_pde_intro} for which the static subsystem undergoes a bifurcation as $\mu$ is varied over some critical value $\mu_c$, in which case we say that the fast-slow system \eqref{eq:general_pde_intro} with $0 < \eps \ll 1$ undergoes a \textit{dynamic bifurcation}. We shall consider dynamic bifurcations in model problems with and underlying static bifurcation of Turing, Hopf, Turing-Hopf and long wave type. These are in direct correspondence with the four generic possibilities which appear in the linear classification of Cross \& Hohenberg \cite{Cross1993}; see also \cite{Frohoff2023} for a recent classification which also takes the presence or absence of conservation laws into account.
	
	One of the main analytical challenges in the static setting, i.e.~with regard to system \eqref{eq:general_pde_intro}$|_{\eps = 0}$, stems from the unbounded spatial domain. For the applications of interest here, this implies a crossing of \textit{continuous} spectra into the right-half plane as $\mu$ is varied over $\mu_c$. As a consequence, classical reductions to either finite or infinite ODE systems based on center manifold theory \cite{Haragus2010,Vanderbauwhede1992} are not directly applicable. In this case, the instabilities are characterised by a continuous band of unstable modes. An established alternative is to use \textit{modulation theory}, which can be seen as an attempt to generalise a large part of the approach in center manifold theory to the case of systems with continuous spectra \cite{Schneider2017}. The overall structure of a typical argument in modulation theory is as follows:
	\begin{itemize}
		\item[(I)] Introduce a multi-scale ansatz based on spectral information in the unstable but weakly nonlinear regime $|\mu - \mu_c| \ll 1$;
		\item[(II)] Derive a simpler \textit{modulation equation} using formal asymptotic arguments;
		\item[(III)] Show that solutions to the original problem are attracted to and well approximated by solutions to the modulation equation.
	\end{itemize}
	Step (I) is motivated by the expectation that the dynamics close to the onset of instability (i.e.~close to the bifurcation point) should be primarily driven by the interaction of modes in an asymptotically small (but still uncountably infinite) subset of modes contained in a ball around the unstable band(s). The modulation equation derived using formal methods in Step (II) will still have uncountably many degrees of freedom, and will therefore in general be a PDE, but it should in some sense be `simpler' given that the vast majority of modes have been pushed into higher orders by the structure of the multi-scale ansatz from Step (I). The aim in Step (III) is to show that the modulation equation plays a similar role to the reduced equations on a center manifold. In particular, the well-known approximation and attractivity properties of center manifolds are necessary for the vast majority of applications. These properties need to be checked explicitly in order to `justify' or `validate' Steps (I)-(II), and this has been done for a wide range of applications; see \cite{Schneider2017} and the many references therein. Although we shall focus primarily on Steps (I)-(II) in the fast-slow setting in this work, it is worthy to emphasise the importance of Step (III), particularly given that a growing number of case studies show that Step (III) can fail even if the modulation equation in Step (II) is derived in a formally correct way \cite{Schneider1995a,Schneider2005,Schneider2015}. 
	
	\
	
	The primary contribution of the present article is to generalise Steps (I)-(II) for fast-slow problems in the general form \eqref{eq:general_pde_intro} (although we also expect the methods to apply outside of this class). In order to generalise Step (I), we build upon recent work in \cite{Jelbart2022} where it was shown for a dynamic Turing instability in a Swift-Hohenberg equation that the relevant multi-scale ansatz can be formulated as a geometric blow-up transformation. In this approach, a dynamic generalisation of the classical ansatz is obtained by replacing the usual small parameter with a time-dependent variable $r(t)$ which measures the distance of solutions from the so-called \textit{blow-up manifold}, which replaces the static bifurcation point in an auxiliary phase space referred to as the \textit{blown-up space}. This space tends to have a more complicated geometry, but improved dynamical properties which allow for a more effective application of certain linearisation techniques. An important feature of this approach is that the simple rescalings of time and space have to be replaced by more complicated relationships which allow for the ratio of different spatial and temporal scales to change as a function of $r(t)$. This is similar to the use of time-dependent transformations of space and time in \textit{dynamic renormalization} approaches \cite{Bricmont1995,Chapman2021,Siettos2003}. 
	The approach developed in \cite{Jelbart2022} and this article is novel for PDEs, but its feasibility is supported to a certain extent by a long history of successes with regard to the study of dynamic bifurcations in finite-dimensional fast-slow systems using the geometric blow-up method; see \cite{Dumortier1996,Krupa2001a,Krupa2001c,Krupa2001b,Szmolyan2001,Szmolyan2004} for important early works and \cite{Jardon2019b} for a recent survey.
	
	In order to generalise Step (II), we will show that the geometric blow-up approach proposed in Step (I) can be coupled to the classical formal asymptotic approach known as the \textit{method of multiple scales} \cite{Kevorkian2012,Kuramoto1984,Nayfeh2008}. The main task is to show that a formally correct generalisation of this method to the fast-slow setting is possible. In order to do so, we need to account for a number of complications including (i) the fact that the usual perturbation parameter has been replaced by a time-dependent variable, (ii) the fact that the simple rescalings of space and time are more complicated due to (i), and (iii) the fact that we work in a non-trivial geometry (the blown-up space). Although each of these complications may be expected to lead to (potentially significant) complications in applications, we will see that structurally, they do not pose a significant obstacle to a systematic extension to the fast-slow setting.
	
	\
	
	We will focus on the 
	general system \eqref{eq:general_pde_intro}, and four different model problems within this class. The `output' in each case, is a (generally non-autonomous) modulation equation which describes the evolution of the formal leading order approximation in the blown-up space. The model problems are obtained as fast-slow extensions of the following:
	\begin{itemize}
		\item[(M1)] A cubic Swift-Hohenberg equation with a Turing instability;
		\item[(M2)] A two-component Brusselator (reaction-diffusion) system with a Hopf bifurcation;
		\item[(M3)] A system of coupled Kuramoto-Sivashinsky equations with a Turing-Hopf bifurcation;
		\item[(M4)] A two-component, two-dimensional Navier-Stokes problem with a conservative long wave bifurcation.
	\end{itemize}
	In each case, we rely on existing calculations and results for the static counterparts to these problems. See \cite{Collet1990,Eckhaus1993,Jelbart2022,Kirrmann1992} for (M1), \cite{Glansdorff1971,Schneider1998} for (M2), \cite{Schneider1997,Schneider2017} for (M3) and \cite{Meshalkin1961,Nepomniashchii1976,Schneider1999b} for (M4). Our methods allow for the derivation of a real Ginzburg-Landau (GL) equation in the first case, a complex GL equation in the second case, a system of generalised complex GL equations in the third case, and a Cahn-Hilliard equation in the fourth case. The general form of the modulation equation is in each case closely related to the corresponding equation in static modulation theory, except for a number of important distinctions. In the fast-slow setting, additional terms arise due to the parameter drift, the coefficients of the modulation equations can depend on time, the relationship between spatial and temporal scales is more complicated, and the equations are posed in the non-trivial geometry of the blow-up space.
	
	\
	
	There is a growing literature on dynamic bifurcations in PDEs, and there are many important works which have made significant contributions on problems which are similar to or closely related to the core topic of this article. For an alternative approach to geometric blow-up which has recently been developed for fast-slow PDEs on bounded domains we refer to \cite{Engel2022,Engel2020}. We refer to \cite{Avitabile2020} for rigorous results on a range of dynamic bifurcations which satisfy the spectral gap requirement of the center manifold theory in \cite{Haragus2010,Vanderbauwhede1992}. Rigorous results on the exchange of stability and/or delayed stability loss phenomenon have been derived using upper and lower solutions for scalar PDEs in \cite{Butuzov2002b,Butuzov2000,Butuzov2001,Butuzov2002,Kaper2018,Nefedov2003}, and we refer to \cite{Bilinsky2018,Goh2022,Kaper2018,Kaper2021} for detailed formal and numerical studies of delayed Hopf bifurcation. Finally, travelling wave dynamics induced by the slow passage through pitchfork and fold type bifurcations have recently been analysed in \cite{Goh2022}.
	
	
	
	
	\
	
	The remainder of the article is structured as follows: In Section \ref{sec:types_of_Turing_instablity} we introduce basic notions, assumptions and definitions for the four (static) bifurcations of interest. In Section \ref{sec:dynamic_Turing_instability} we introduce four particular model problems (one for each bifurcation type), show that they undergo a particular type of dynamic bifurcation, and present the known modulation approximation for the underlying static problem. In Section \ref{sec:geometric_blow-up_and_the_modulation_equations} we outline and apply the geometric blow-up approach for the general class of equations under consideration, and couple it to the method of multiple scales. In Section \ref{sec:model_problems} we apply this method in order to derive modulation equations for each of the model problems. We conclude with a summary and discussion of our findings in Section \ref{sec:conclusion_and_outlook}.

	\section{Types of Turing instability: Four bifurcations}
	\label{sec:types_of_Turing_instablity}
	
	In this section we introduce the class of (static) systems of interest, along with defining conditions for the four types of bifurcations for which we consider dynamic generalisations in later sections. These are in direct correspondence with the four basic bifurcation types distinguished in the linear classification of Cross \& Hohenberg \cite[Ch.~IIIb]{Cross1993}. 
	
	We shall be interested in dynamic extensions of a parameter-dependent PDEs of the form
	\begin{equation}
		\label{eq:general_pde}
		\partial_t \bm{u} = (\M + \Lop) 
		\bm{u} + \N(\bm{u}, \mu) , 
	\end{equation}
	where $t \geq 0$, $\bm u = \bm u(\bm x,t) \in \R^N$, $\M$ is an $N \times N$ matrix with constant coefficients in $\R$ and $\mu \in \R$ is a control parameter. The operator $\Lop = \textup{diag}\ (\Lop_1,\ldots,\Lop_N)$ is an order $m \in \mathbb N$ linear differential operator with either constant or $\bm x$-dependent coefficients, i.e.~the components are given by
	\begin{equation}
		\label{eq:L}
		\Lop_j
		= \sum_{|\alpha| \leq m} a_{j}^{(\alpha)}(\bm x) \Diff_{\bm x}^\alpha 
		\qquad
		j = 1,\ldots,N,
	\end{equation}
	where $\alpha = (\alpha_1,\ldots,\alpha_n)$ is a multi-index, $|\alpha| = \alpha_1 + \ldots + \alpha_n$, $a_{j}^{(\alpha)} \in C^\infty(\widetilde X, \R)$ for some open domain $\widetilde X$ in $\mathbb R^n$, and
	$\Diff_{\bm x}^\alpha :=
		\partial^{|\alpha|} /
		\partial {x_1}^{\alpha_1} \partial {x_2}^{\alpha_2} \dots \partial x_n^{\alpha_n}$.
	The operator
	$\N$, 
	which may also depend on $\bm x$, $\Diff_{\bm x} \bm u$ and higher spatial derivatives, is nonlinear in $(\bm u, \mu)$ and satisfies 
	\begin{equation}
		\label{eq:N_cond}
		\N(\bm 0, \mu) = \bm 0 
	\end{equation}
	for all $\mu \in I$ and $\bm x \in \Omega \subseteq \R^n$, where $I$ is a neighbourhood of $0$ in $\R$. We assume that the spatial domain $\Omega$ is unbounded in at least one direction. More precisely, $\bm x = (x_1, x_2, \ldots, x_n) \in \Omega = \R^p \times \widetilde \Omega \subseteq \R^n$ for $p \in \{1,\ldots,n\}$, where $\widetilde \Omega \subset \R^{n-p}$ is bounded if $p < n$ and empty if $p = n$. 
	
	\begin{remark}
		Since the calculations presented herein are primarily formal, we do not specify the phase space. Boundary conditions on $\widetilde \Omega$ are also not specified at this point. Periodic boundary conditions are assumed for the only model problems with $\widetilde \Omega \neq \emptyset$ considered in later sections.
	\end{remark}
	
	A number of the preceding assumptions, which imply the existence of a spatially homogeneous steady state $\bm u(\bm x,t) = \bm u^\ast(t) \equiv 0$, are summarised in the following.
	
	\begin{assumption}
		\label{ass:steady_state}
		$\bm u(\bm x,t) = \bm u^\ast(t) \equiv \bm 0$ defines a spatially homogeneous steady state for \eqref{eq:general_pde} for all $\mu \in I \subseteq \R$, where $I$ is a neighbourhood of zero. The eigenvalue problem obtained by linearisation about $\bm u^\ast$ is well-defined and given by
		\begin{equation}
			\label{eq:eigenvalue_problem}
			\left( \M + \widehat{\Lop} + \N'(\bm 0,\mu) \right) \widehat{\bm v}_j = \lambda_j(\bm \xi,\mu) \widehat{\bm v}_j ,
		\end{equation}
		where $\widehat{\Lop}$ is the diagonal matrix with entries $\sum_{|\alpha| \leq m} a_{j}^{(\alpha)}(\bm x) i^{|\alpha|} \xi^\alpha$, 
		$\bm \xi = (\bm \xi_{ub}, \bm \xi_b) \in \R^p \times \mathbb Z^{n-p}$ is the wave vector, $\xi^\alpha = \xi_1^{\alpha_1} \xi_2^{\alpha_2} \cdots \xi_n^{\alpha_n}$,
		$\N'(\bm 0,\mu)$ is the Frech\'et derivative/Jacobian of $\N$ with respect to $\bm u$ at $\bm u^\ast$, and $\widehat{\bm v}_j 
		\in \mathbb C^N$ is the eigenfunction. 
	\end{assumption}
	
	
	Assumption \ref{ass:steady_state} is not restrictive at all for the applications we have in mind: The eigenvalue problem \eqref{eq:eigenvalue_problem} can be obtained after Fourier transform in space, and it is well-defined for as long as $\N$ is sufficiently regular in $\bm u$ (it should be Frech\'et differentiable at $\bm u^\ast$ so that $\N'(\bm 0, \mu)$ exists). For the applications of interest in this work, the unbounded spatial domain $\Omega = \R^p \times \widetilde \Omega$ implies that the spectrum defined by \eqref{eq:eigenvalue_problem} is continuous.
	
	\begin{assumption}
		\label{ass:symmetry}
		Let
		\begin{equation}
			\label{eq:xi}
			\xi := 
			\begin{cases}
				\bm \xi_{ub} \in \R, & p = 1, \\
				|\bm \xi_{ub}| \in \R_{\geq 0}, & p \geq 2.
			\end{cases}
		\end{equation}
		The maps $\xi \mapsto \real \lambda_j(\xi,\mu)$ are well-defined for all $j \in \mathbb N_{\geq 0}$, and their graphs define $p$-dimensional surfaces which are ordered according to
		%
		$\real \lambda_j(\xi,\mu) \geq \real \lambda_{j+1}(\xi,\mu)$. 
	\end{assumption}
	
	
	Let $\bm x := (\bm x_{ub}, \bm x_b) \in \R^p \times \widetilde \Omega$, where the subscripts `ub' and `u' highlight that we have separated ``unbounded" and ``bounded" directions respectively. Assumption \ref{ass:symmetry} can be expected to hold under suitable boundary conditions on $\widetilde \Omega$ if the linear operator $\widehat \Lop + \N'(\bm 0,\mu)$, and therefore $\M + \widehat \Lop + \N'(\bm 0, \mu)$ in \eqref{eq:eigenvalue_problem}, is translation invariant in $\bm x_{ub}$, since this leads to a countable number of eigenvalue surfaces corresponding to eigenfunctions proportional to $\me^{i \bm \xi_{ub} \cdot \bm x_{ub} + \lambda_j t}$. We include it here in order to avoid having to make the additional restrictions on $\Lop$, $\N$ and the boundary conditions for $\widetilde \Omega$ explicit at this point.
	
	\
	
	It follows from Assumptions \ref{ass:steady_state}-\ref{ass:symmetry} that the steady state $\bm u^\ast$ undergoes an instability when the surface defined by $\xi \mapsto \real \lambda_1(\xi,\mu)$ crosses zero for some $\xi = \xi_c \in \R$ under variation in $\mu$ past some critical value $\mu_c \in I$. Note that this implies a crossing at $\xi = -\xi_c$, too, since \eqref{eq:general_pde} is real-valued, which implies that $\lambda_j(-\xi,\mu) = \overline{\lambda_j}(\xi,\mu)$ and therefore $\real \lambda_j(\xi,\mu) = \real \lambda_j(-\xi,\mu)$. Moreover, equation \eqref{eq:xi} forces $\xi_c = 0$ if $p \geq 2$; see Remark \ref{rem:symmetry} below. For simplicity, we assume that such an instability occurs for $\mu_c = 0$.
	
	\begin{assumption}
		\label{ass:bifurcation}
		The steady state $\bm u^\ast$ undergoes an instability as $\mu$ is varied over $\mu_c = 0$. Specifically, we assume that the following conditions are satisfied: 
		\begin{itemize}
			\item[(i)] $\mu < 0 \ \implies \ \real \lambda_1(\xi,\mu) < 0$ for 
			all $\xi \in \R$;
			\item[(ii)] $\mu = 0 \ \implies \ \real \lambda_1(\pm \xi_c, 0) = 0$ and $\real \lambda_1(\xi,0) < 0$ for all $\xi \neq \xi_c$;
			\item[(iii)] $\mu > 0 \ \implies \ \real \lambda_1(\xi,\mu) > 0$ for all $\xi$ such that $|\xi| \in (\xi_-(\delta), \xi_+(\delta))$, where $\xi_\pm(\delta)$ are continuous functions depending on $\mu =: \delta^2 \ll 1$ satisfying $\xi_-(0) = \xi_+(0)$. Moreover, $\real \lambda_1(\xi_\pm(\delta),\mu) = 0$ and $\real \lambda_1(\xi,\mu) < 0$ for all $\xi$  such that $|\xi| \notin [\xi_-(\delta), \xi_+(\delta)]$.
		\end{itemize}
	\end{assumption}
	
	Assumption \ref{ass:bifurcation} implies that $\bm u^\ast$ becomes unstable as $\mu$ is varied over zero. Since the spectrum is continuous, there is an entire band of unstable modes with wavenumbers $\xi \in (\xi_-(\delta), \xi_+(\delta))$ when $\mu > 0$. On the linear level, there are four basic types of instability, depending on whether or not $\xi_c$ and/or $\omega_c := \imag \lambda_1(\xi_c,0)$ are zero; we refer again to \cite[Ch.~IIIb]{Cross1993} for details.
	
	\begin{figure}[t!]
		\centering
		\includegraphics[width=0.45\textwidth]{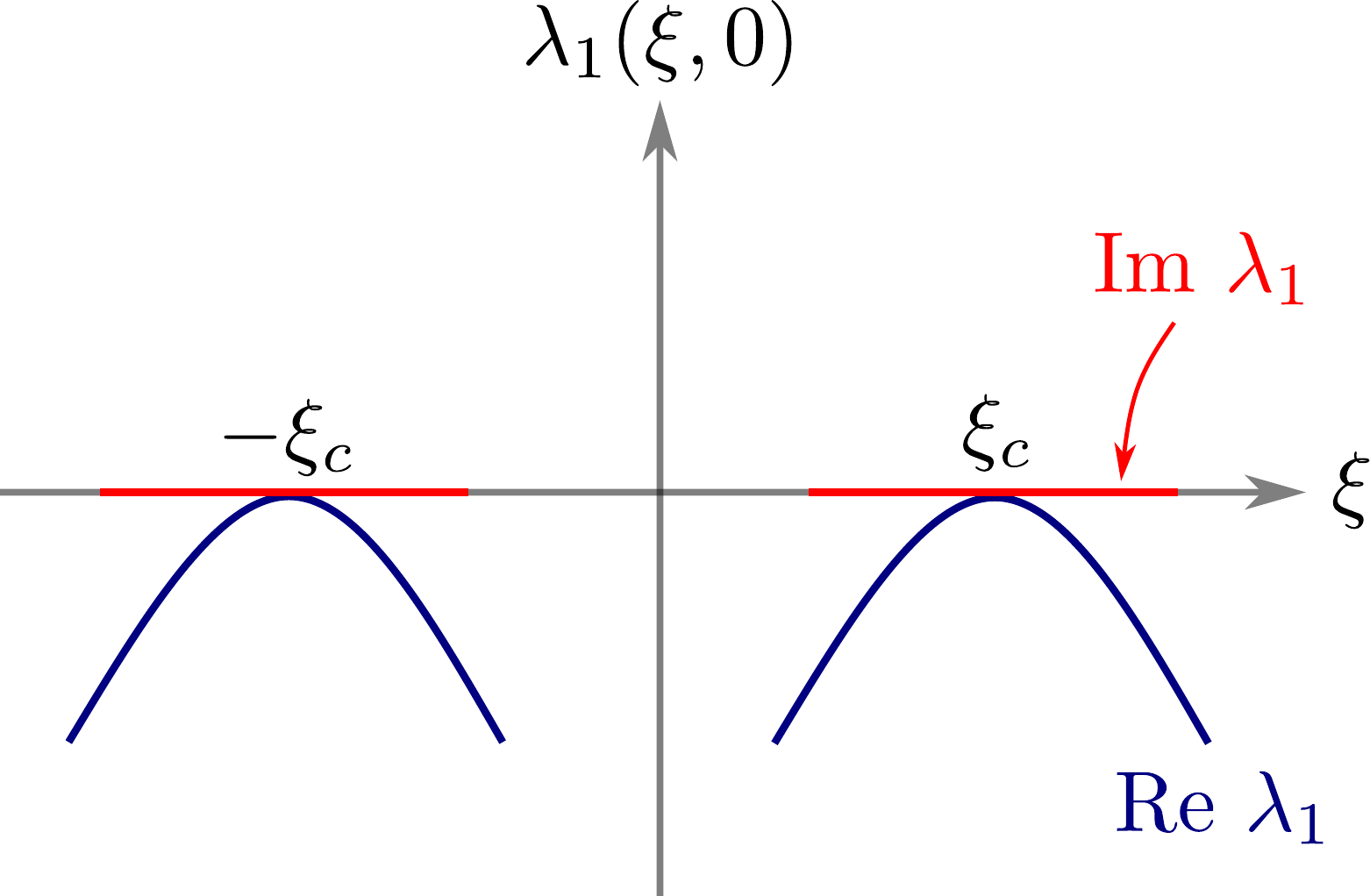} \ \ \ \ 
		\includegraphics[width=0.45\textwidth]{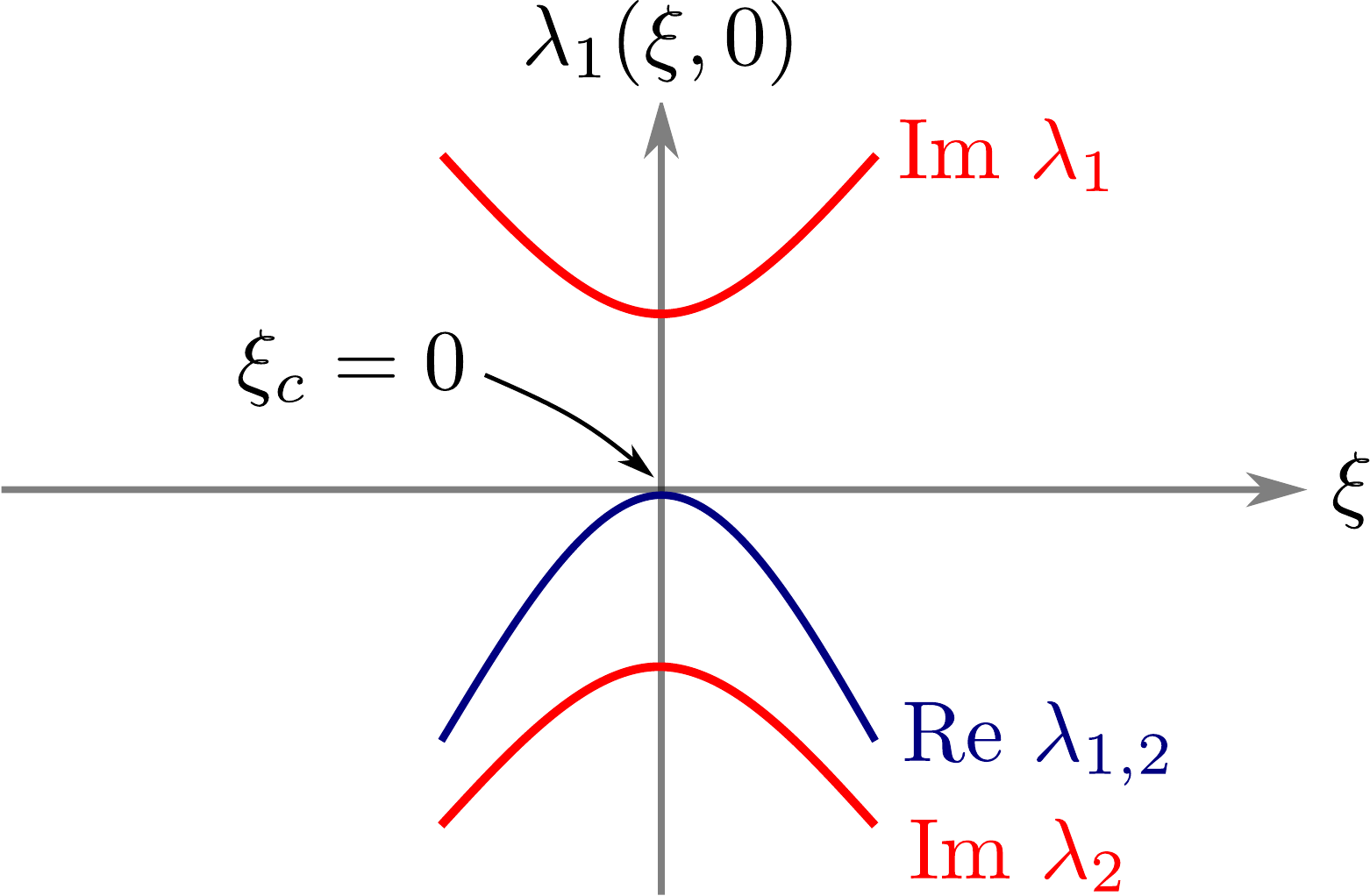}
		
		\
		
		\includegraphics[width=0.45\textwidth]{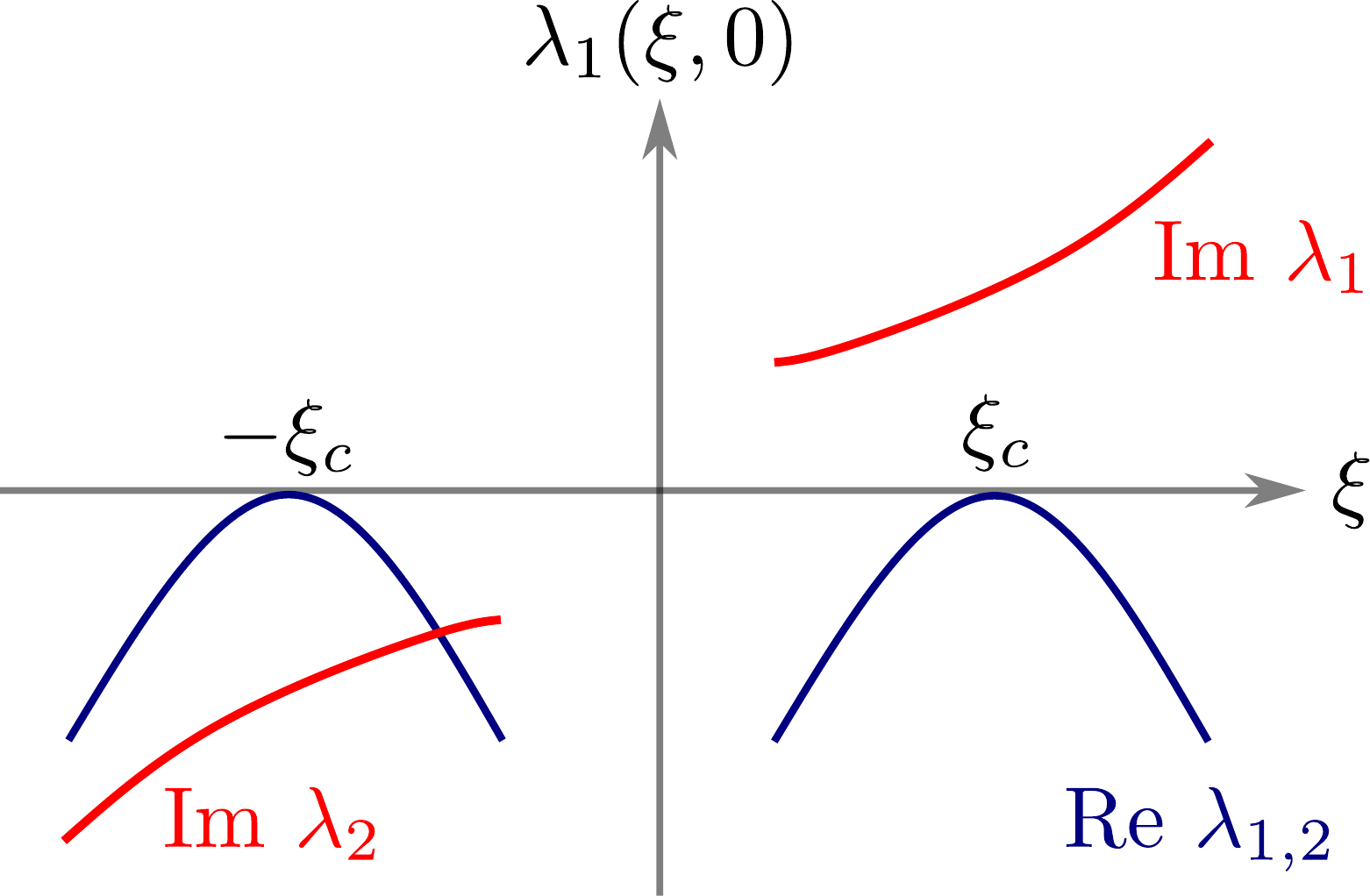} \ \ \ \ 
		\includegraphics[width=0.45\textwidth]{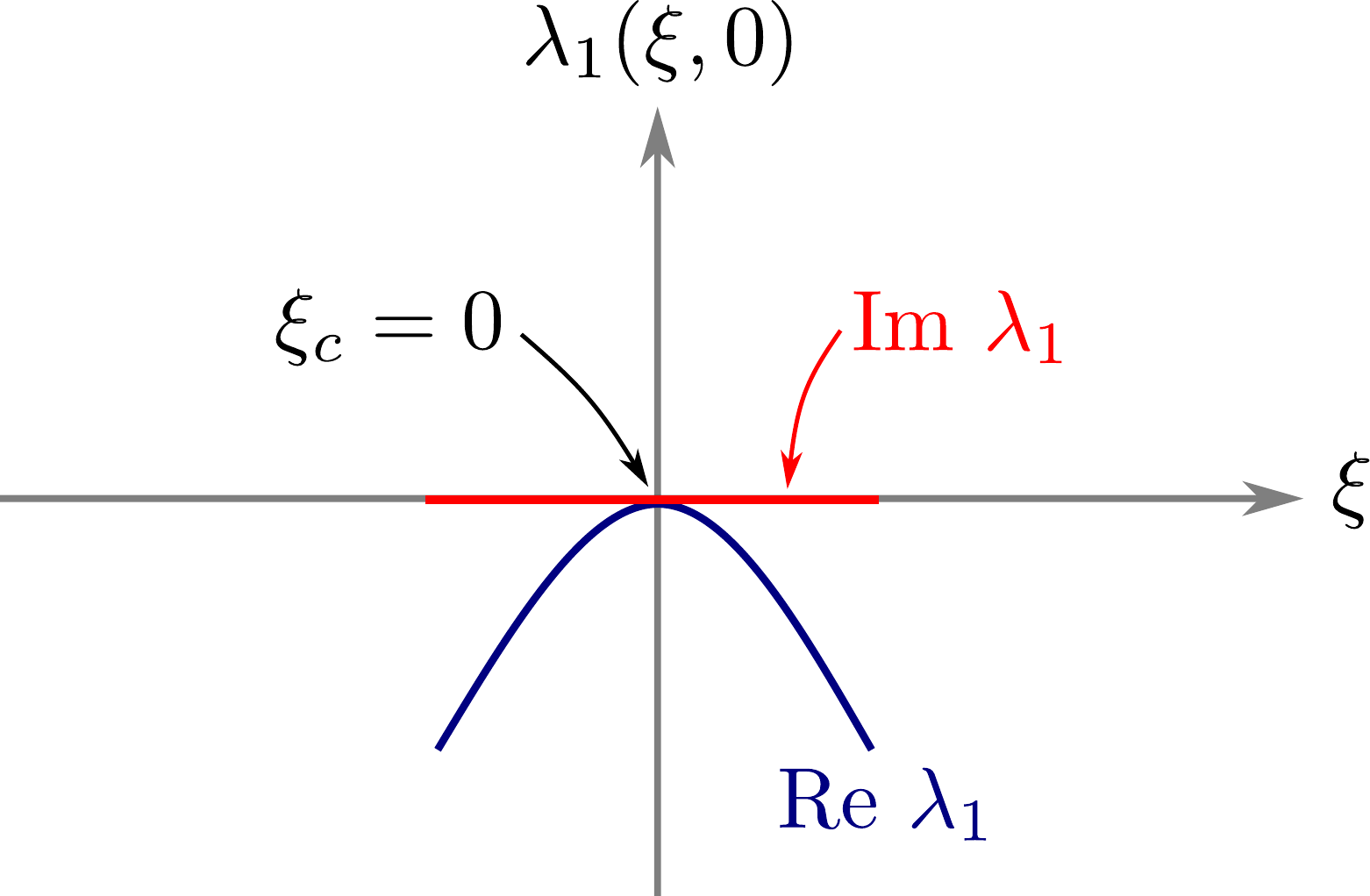}
		\caption{Real and imaginary parts of the leading eigenvalue $\lambda_1(\xi,0)$ at a bifurcation of Turing type (top left), Hopf type (top right), Turing-Hopf type (bottom left) and long-wave type (bottom right); cf.~Definition \ref{def:bifurcations_static}. For visual purposes, we sketch the situation for the case $x \in \Omega = \R$. Note that $\lambda_1(-\xi,0) = \overline{\lambda_1}(\xi,0)$ because \eqref{eq:general_pde} is real-valued.}
		\label{fig:spectra}
	\end{figure}
	
	\begin{definition}
		\label{def:bifurcations_static}
		Consider the PDE \eqref{eq:general_pde} under Assumptions \ref{ass:steady_state}-\ref{ass:bifurcation}. We say that there is a bifurcation with critical modes $\xi = \pm \xi_c$ at $\mu = 0$ of
		\begin{enumerate}
			\item Turing type if $\xi_c \neq 0$ and $\omega_c = 0$; 
			\item Hopf type if $\xi_c = 0$ and $\omega_c \neq 0$; 
			\item Turing-Hopf type if $\xi_c \neq 0$ and $\omega_c \neq 0$; 
			\item Long-wave type if $\xi_c = 0$ and $\omega_c = 0$. 
		\end{enumerate}
	\end{definition}
	
	A representative spectrum for each bifurcation is shown in Figure \ref{fig:spectra}. 
	Turing, Hopf and Turing-Hopf type bifurcations play an important role in facilitating the onset of spatial, temporal and spatio-temporal patterns respectively. Long-wave bifurcations are not directly associated with pattern formation, since (on the linear level) there is no oscillatory component in either space or time. We refer to the well-known references \cite{Cross1993,Hoyle2006,Schneider2017} for additional details, since we are not focused on pattern formation per se in this work.
	
	
	\begin{remark}
		\label{rem:symmetry}
		Assumption \ref{ass:symmetry} restricts us to spatial domains with one unbounded direction for Turing and Turing-Hopf type bifurcations, since $\xi_c \neq 0$ implies  that $-\xi_c < 0$, which is only possible for $p = 1$ due to \eqref{eq:xi}. For bifurcations of Hopf and long-wave type we may take $p > 1$, in which case the graph of $\real \lambda_1$ is a $p$-dimensional rotationally symmetric version of the corresponding curves shown in Figure \ref{fig:spectra}. 
	\end{remark}
	
	\begin{remark}
		\label{rem:center_manifold_theory}
		The presence of a continuous band of unstable modes prohibits the reduction to a finite or countably infinite system of ODEs. In particular, classical center manifold approaches such as those developed and applied in e.g.~\cite{Avitabile2020,Haragus2010,Vanderbauwhede1992}, which rely on a spectral gap property, do not apply. See however \cite{Roberts2015,Roberts2022} for a center manifold based approach to the study of (static and dynamic) bifurcations without a spectral gap which utilises a spectral gap in a larger ensemble system into which the original problem is embedded.
	\end{remark}
	
	\begin{remark}
		\label{rem:spatial_dimensions}
		In the preceding discussion we restricted to the case that $\mu \in \R$. Multi-parameter extensions to the case $\mu \in \R^l$ with $l > 1$ are straightforward, given that we consider codimension-1 bifurcations.
	\end{remark}

	\section{Dynamic Turing instability: Definitions and model problems}
	\label{sec:dynamic_Turing_instability}
	
	We turn now to \textit{dynamic} bifurcation problems, which may be obtained as simple fast-slow extensions of the static bifurcation problems described in Section \ref{sec:types_of_Turing_instablity}. More precisely, we consider systems of the form
	\begin{equation}
		\label{eq:dynamic_general_form}
		\partial_t \bm u = \left(\M + \Lop \right) \bm u + \N(\bm u, \mu, \eps) , \qquad
		\dot \mu = \eps ,
	\end{equation}	
	where $t \geq 0$, $\bm u = \bm u(\bm x,t) \in \R^N$, $\mu = \mu(t) \in \R$, $\dot \mu = \dd \mu / \dd t$ and $\eps \in [0,\eps_0]$ is a small perturbation parameter. The operators $\M$, $\Lop$ and $\N$ are defined similarly to Section \ref{sec:types_of_Turing_instablity}, 
	except that $\N$ is also allowed to depend continuously on $\eps$. We also require that
	\begin{equation}
		\label{eq:N_cond_eps}
		\N(\bm 0, \mu, 0) = \bm 0 ,
	\end{equation}
	for all $\mu \in I$; cf.~\eqref{eq:N_cond}. As before, we assume that $\bm x \in \Omega = \R^p \times \widetilde \Omega \subseteq \R^d$ where $p \geq 1$ and $\widetilde \Omega$ is bounded (or empty), i.e.~there is at least one unbounded direction.

	\begin{definition}
		\label{def:static_subsystem}
		The limiting problem obtained after taking $\eps \to 0$, i.e.
		\begin{equation}
			\label{eq:static_general_form}
			\partial_t \bm u = \left(\M + \Lop \right) \bm u + \N(\bm u,\mu,0) ,
		\end{equation}
		where $\mu = \mu(0) = const.$, is called the \textit{static subsystem}.
	\end{definition}
	
	The `static' terminology reflects the fact that the slow variable $\mu(t) \in \R$ in \eqref{eq:dynamic_general_form} is frozen in \eqref{eq:static_general_form}, and considered as a control parameter. Note that equation \eqref{eq:static_general_form} is in the general form \eqref{eq:general_pde}. We are interested in situations where the static subsystem undergoes one of the four bifurcations described in Section \ref{sec:types_of_Turing_instablity}. 
	
	\begin{definition}
		\label{def:bifurcations_dynamic}
		Consider system \eqref{eq:dynamic_general_form} and assume that the static subsystem \eqref{eq:static_general_form} satisfies Assumptions \ref{ass:steady_state}-\ref{ass:bifurcation}. We say that system \eqref{eq:dynamic_general_form} has a \textit{dynamic bifurcation} of Turing, Hopf, Turing-Hopf, or long-wave type with critical modes $\xi = \pm \xi_c$ at $\mu = \mu_c$ if the static subsystem has a bifurcation of the corresponding type.
	\end{definition}
	
	In the following we introduce and identify dynamic bifurcations of each type in specific model problems, which we refer to as (M1)-(M4) as in Section \ref{sec:introduction}.

	\subsection{(M1): Dynamic Turing bifurcation}
	\label{sub:case_i_dynamic_proper_Turing_bifurcation}
	
	Dynamic Turing bifurcation in a Swift-Hohenberg equation with slow parameter drift been studied using geometric blow-up in \cite{Jelbart2022}. In its simplest form, the problem is given by
	\begin{equation}
		\label{eq:dynamic_sh}
			\partial_t u = - (1 + \partial_x^2)^2 u + \mu u - u^3 , \qquad
			\dot \mu = \eps ,
	\end{equation}
	with $u(x,t) \in \R$ and $x \in \Omega = \R$.
	
	
	System \eqref{eq:dynamic_sh} is already in the form \eqref{eq:dynamic_general_form}, with
	\[
	\M = 0, \qquad 
	\Lop = - (1 + \partial_x^2)^2 , \qquad 
	\N(u,\mu,\eps) = \mu u - u^3 .
	\]
	The static subsystem is the well-known cubic Swift-Hohenberg equation
	\begin{equation}
		\label{eq:static_sh}
		\partial_t u = - (1 + \partial_x^2)^2 u + \mu u - u^3 ,
	\end{equation}
	with control parameter $\mu \in \R$.
	Direct calculations show that equation \eqref{eq:static_sh} satisfies Assumption \ref{ass:steady_state}. In particular, the eigenvalue problem \eqref{eq:eigenvalue_problem} is satisfied with
	\begin{equation}
		\label{eq:spectrum_sh}
		\lambda(\xi, \mu) = -(1-\xi^2)^2 + \mu, \qquad \xi \in \R,
	\end{equation}
	and eigenfunction $\hat v = \me^{i x}$. Equation \eqref{eq:spectrum_sh} can be used to verify Assumptions \ref{ass:symmetry}-\ref{ass:bifurcation} directly. Assumption \ref{ass:symmetry} is immediate, as are the properties (i)-(ii) in Assumption \ref{ass:bifurcation} (here $\xi_c = 1$). Asymptotic expressions for $\xi_\pm(\delta)$ can be obtained by solving the equation $\lambda(\xi_\pm(\delta),\delta^2) = 0$, which yields
	\begin{equation}
		\label{eq:xi_pm_sh}
		\xi_\pm(\delta) = 1 \pm \frac{1}{2} \delta + O(\delta^2) , \qquad \text{as $\delta\rightarrow 0$.}
	\end{equation}
	Thus Assumption \ref{ass:bifurcation} (iii) is satisfied with $\xi_c = 1 \neq 0$ and $\omega_c = 0$. 
	Thus by Definition \ref{def:bifurcations_static}, the static subsystem \eqref{eq:static_sh} has a Turing bifurcation with critical modes $\xi = \pm \xi_c = \pm 1$ at $\mu = 0$. Correspondingly, by Definition \ref{def:bifurcations_dynamic}, system \eqref{eq:dynamic_sh} has a dynamic Turing bifurcation for these values.
	
	
	\
	
	The dynamics of the static subsystem \eqref{eq:static_sh} with $0 < \mu = \delta^2 \ll 1$ are known to be well-approximated for time-scales $t \sim \delta^{-2}$ by multi-scale solutions of the form
	\begin{equation}
		\label{eq:approximation_sh}
		u(x,t) = \delta \psi(x,X,T) = \delta \left(A(X,T) \me^{ix} + c.c.\right) + O(\delta^2) ,
	\end{equation}
	where $X = \delta x$, $T = \delta^2 t$, and $A(X,T) \in \mathbb C$ obeys the real GL equation
	\begin{equation}
		\label{eq:modulation_eqn_static_sh}
		\partial_T A = 4 \partial_X^2 A + A - 3 A |A|^2 .
	\end{equation}
	There are many results on the validity of the real GL equation \eqref{eq:modulation_eqn_static_sh} as a modulation equation for the static Swift-Hohenberg problem \eqref{eq:static_sh}. The most important of these are the approximation results in \cite{Collet1990,Kirrmann1992,VanHarten1991}, which ensure the closeness of solutions to \eqref{eq:static_sh} and \eqref{eq:modulation_eqn_static_sh} over a specified time interval, and the attractivity results in \cite{Eckhaus1993,Mielke1995,Schneider1995b}, which ensure that all solutions with initial conditions of size $O(\delta)$ are approximated by GL solutions after transient dynamics. Taken together, these results can be seen as replacing the usual approximation and attractivity properties of a center manifold. In fact, the approximation and attractivity properties have been proposed as criteria for a modulation equations to be viewed as generalisation of the situation in which solutions are approximated by a reduced system on a center manifold, to the case of problems with continuous spectrum \cite{Schneider1998,Schneider1999}.
	
	\begin{remark}
		For $p=1$, the real GL equation
		$
		\partial_T A = c_1 \partial_X^2 A + c_2 A + c_3 A |A|^2 
		$
		is as a universal modulation equation in the weakly nonlinear regime $0 < \mu \ll 1$ associated with a Turing instability at $\mu = 0$. In other words, it can be derived under generic assumptions, and its particular form in applications varies only via the values taken by the coefficients $c_1, c_2, c_3 \in \R$. If $c_3 < 0$, the GL equation can be used to prove the global existence of small (with respect to $L^\infty$) solutions in the `original problem' as $\mu \to 0^+$ \cite{Schneider1994b}, and the upper-semicontinuity of the original system attractor towards the GL attractor was shown in \cite{Mielke1995}. See \cite{Schneider2017} for details and further references.
	\end{remark}
	
	\begin{remark}
		\label{rem:scaling}
		For the modulation approximation \eqref{eq:approximation_sh} and all other model problems considered in this work, the relevant rescalings have the form $X = \delta^\gamma x$ and $T = \delta^\beta t$, for real exponents $\gamma, \beta > 0$. Values for $\gamma$ and $\beta$ can be directly inferred from the shape of the spectrum in the weakly nonlinear regime $0 < \mu \ll 1$. Specifically, $\delta^\gamma$ is the length scale of the band(s) of unstable modes $\xi$, and $\delta^\beta$ is the asymptotic order of magnitude of $\real \lambda_1(\xi_c,\mu)$. More precisely, $\gamma, \beta > 0$ are determined via the relations $| \xi_+(\delta) - \xi_-(\delta) | = O(\delta^\gamma)$ and $\real \lambda_1(\xi_c,\delta^2) = O(\delta^\beta)$.
	\end{remark}
	

	\subsection{(M2): Dynamic Hopf bifurcation}
	\label{sub:case_iv_dynamic_Hopf_bifurcation}
	
	We consider the slow passage through a Hopf bifurcation (a.k.a.~dynamic Hopf bifurcation) in a dynamic Brusselator system
	\begin{equation}
		\label{eq:dynamic_b_1}
		\begin{split}
			\partial_t \tilde u &= d_1 \Delta \tilde u + a - (b + 1) \tilde u + \tilde u^2 , \\
			\partial_t \tilde v &= d_2 \Delta \tilde v + b \tilde u - \tilde u^2 \tilde v ,  \\
			\dot b &= \eps ,
		\end{split}
	\end{equation}
	where $\bm x = (x_1,\ldots,x_n) \in \Omega = \R^n$, $\bm u(\bm x,t) = (\tilde u(\bm x,t), \tilde v(\bm x,t)) \in \R^2$, $b(t) \in \R$, $\Delta = \sum_{j=1}^n \partial_{x_j}^2$ and $d_1, d_2, a > 0$ are fixed parameters. Note that we do not restrict the spatial dimension $n$. The static subsystem
	\begin{equation}
		\label{eq:static_b_1}
		\begin{split}
			\partial_t \tilde u &= d_1 \Delta \tilde u + a - (b + 1) \tilde u + \tilde u^2 , \\
			\partial_t \tilde v &= d_2 \Delta \tilde v + b \tilde u - \tilde u^2 \tilde v ,  
		\end{split}
	\end{equation}
	for which $b$ is a control parameter, has a unique spatially homogeneous steady state $(\tilde u^\ast, \tilde v^\ast) = (a, b/a)$. This well-known reaction-diffusion system was proposed by the Brussels school as a model for certain kinds of chemical reactions \cite{Glansdorff1971}. Direct calculations in \cite[App.~B]{Kuramoto1984} show that the eigenvalue problem obtained after translating $(\tilde u^\ast, \tilde v^\ast)$ to the origin via $(u, v) = (\tilde u - a, \tilde v - b/a)$ and linearising about $\bm u^\ast = (0,0)$ has eigenfunctions $\widehat{\bm v} = (a_\xi, b_\xi)^\transpose \me^{i \bm \xi \cdot \bm x}$, where $\bm \xi = (\xi_1, \ldots, \xi_n) \in \R^n$, with eigenvalues $\lambda_{1,2}$ which satisfy
	\[
	\lambda_{1,2}^2 + \sigma(\xi,b) \lambda_{1,2} + \kappa(\xi,b) = 0,
	\]
	where $\xi = |\bm \xi|$ and
	\[
	\sigma(\xi,b) = 1 + a^2 - b + (d_1 + d_2) \xi^2 , \quad
	\kappa(\xi,b) = a^2 + (a^2 d_1 + (1 - b) d_2) \xi^2 + d_1 d_2 \xi^4 .
	\]
	An instability of the steady state $\bm u^\ast$ can occur if either (i) $\sigma(\xi,b) = 0$ or (ii) $\kappa(\xi,b) = 0$ for some $\xi = \xi_c$ and $b = b_c$ under variation in $b$ (recall that $b$ is a control parameter when $\eps = 0$). We are interested in the former case, which can be shown to occur at $\xi_c=0$ for $b_c = 1 + a^2$. The latter case is ruled out if
	\[
	\sqrt{\frac{d_1}{d_2}} > \frac{1}{a} \left( \sqrt{1 + a^2} - 1 \right) .
	\]
	We note that $d_1 > d_2$ would be sufficient, since the right-hand side is less than $1$. The graph of $\xi \mapsto \lambda_1(\xi,b)$ defines an $n$-dimensional surface, which for $b = b_c$ looks like a rotationally symmetric version of Fig.~\ref{fig:spectra}(d).
	
	In order to study the dynamics near the instability, we make one last coordinate transformation $\mu = (b - b_c) / b_c$. Together with the coordinate translation described above, this leads to
	\begin{equation}
		\label{eq:dynamic_b_2}
		\begin{split}
			\partial_t u &= d_1 \Delta u + a^2 u + a^2 v + (1 + a^2) u \mu + f(u,v,\mu) , \\
			\partial_t v &= d_2 \Delta v - (1 + a^2) u - a^2 v - (1 + a^2) u \mu - f(u,v,\mu) - \eps (1 + a^2) / a ,  \\
			\dot \mu &= \eps ,
		\end{split}
	\end{equation}
	where $f(u,v,\mu) := ( (1 + a^2) (1 + \mu) / a) u^2 + 2 a u v + u^2 v$. System \eqref{eq:dynamic_b_2} is in the general form \eqref{eq:dynamic_general_form} with
	\begin{equation}
		\label{eq:LM_b}
		\M = 
		\begin{pmatrix}
			a^2 & a^2 \\
			-(1 + a^2) & -a^2
		\end{pmatrix} ,
		\qquad
		\Lop = \textup{diag}\ (d_1, d_2) \Delta , 
	\end{equation}
	and
	\[
	\N = 
	\begin{pmatrix}
		(1 + a^2) u \mu + f(u,v,\mu) \\
		- (1 + a^2) u \mu - f(u,v,\mu) - \eps (1 + a^2) / a
	\end{pmatrix} .
	\]
	The preceding observations imply that the static subsystem
	\begin{equation}
		\label{eq:static_b_2}
		\begin{split}
			\partial_t u &= d_1 \Delta u + a^2 u + a^2 v + (1 + a^2) u \mu + f(u,v,\mu) , \\
			\partial_t v &= d_2 \Delta v - (1 + a^2) u - a^2 v - (1 + a^2) u \mu - f(u,v,\mu) ,
		\end{split}
	\end{equation}
	satisfies Assumptions \ref{ass:steady_state}-\ref{ass:bifurcation} (the eigenvalues $\lambda_{1,2}$ are invariant under the coordinate translations) with $\xi_\pm(\delta) = \pm ((1 + a^2) / (d_1 + d_2)) \delta + O(\delta^2)$, $\xi_c = 0$ and $\omega_c = a \neq 0$. 
	It follows that $\bm u^\ast = (0,0)$ undergoes a Hopf bifurcation at $\mu = 0$ in system \eqref{eq:static_b_2}, and a dynamic Hopf bifurcation in system \eqref{eq:dynamic_b_2}.
	
	\
	
	A formal modulation reduction for a class of reaction-diffusion systems containing the static subsystem \eqref{eq:static_b_2} can be found in \cite{Kuramoto1984} (the static Brusselator system \eqref{eq:static_b_1} in particular is treated in \cite[App.~B]{Kuramoto1984}). 
	The calculations therein lead to an approximation for $0 < \mu = \delta^2 \ll 1$ of the form
	\begin{equation}
		\label{eq:approximation_b}
		\begin{pmatrix}
			u(\bm x,t) \\
			v(\bm x,t)
		\end{pmatrix}
		= \delta \left( A(\bm X, T) \me^{iat} \bm \varphi
		+ c.c.\right) + O(\delta^2) , 
	\end{equation}
	where $\bm X = \delta \bm x$, $T = \delta^2 t$, $\bm \varphi = (1 , -1 + ia^{-1})^\transpose$ is the neutral eigenvector at criticality, and $A(\bm X, T) \in \mathbb C$ obeys the complex GL equation
	\begin{equation}
		\label{eq:modulation_eqn_static_b}
		\partial_T A = c_1 \Delta_{X} A + c_2 A - c_3 A |A|^2 
	\end{equation}
	where $\Delta_X := \sum_{j=1}^n \partial_{X_j}^2$, with coefficients
	\begin{equation}
		\label{eq:nu_i}
				c_1 = \frac{d_1 + d_2 - ia (d_1 - d_2)}{2}, \
				c_2 = \frac{1 + a^2}{2} , \ 
				c_3 = \frac{1}{2} \left( \frac{2 + a^2}{a^2} + i \frac{4 - 7a^2 + 	4a^4}{3a^3} \right) .
	\end{equation}
	Approximation and attractivity results for the modulation equation \eqref{eq:modulation_eqn_static_b} have been proven in \cite{Schneider1998}.
	
	\begin{remark}
		For $p = 1$, the complex GL equation \eqref{eq:modulation_eqn_static_b} is a universal modulation equation which governs the dynamics of temporally oscillating systems in the weakly nonlinear regime associated to a Hopf type instability. The static complex GL equation has been studied extensively; we refer to \cite{Aranson2002,Cross1993,Hoyle2006,Kuramoto1984,Schneider2017} and the references therein. More recently, the authors in \cite{Goh2022} studied the slow passage through a Hopf bifurcation in the complex GL equation itself by considering the case in which $\real c_2$ increases slowly in time.
	\end{remark}

	\subsection{(M3): Dynamic Turing-Hopf bifurcation}
	\label{sub:case_ii_dynamic_Turing-Hopf_bifurcation}
	
	We consider a dynamic Turing-Hopf bifurcation in a system of coupled Kuramoto–Sivashinsky equations
	\begin{equation}
		\label{eq:dynamic_ks}
		\begin{split}
			\partial_t u &= -(1 + \partial_x^2)^2 u - \partial_x u + \mu u + \partial_x (u^2 + uv + v^2) , \\
			\partial_t v &= -(1 + \partial_x^2)^2 v + \partial_x v + \mu v + \partial_x (u^2 + uv + v^2) , \\
			\dot \mu &= \eps ,
		\end{split}
	\end{equation}
	where $x \in \Omega = \R$, $\bm u(x,t) = (u(x,t), v(x,t)) \in \R^2$, $\mu(t) \in \R$ and $0 < \eps \ll 1$. The static subsystem \eqref{eq:dynamic_ks}$|_{\eps = 0}$ appears in \cite{Schneider2017} as a simplified version of the system considered in \cite{Schneider1997}. Both can be viewed as extensions of the classical Kuramoto-Sivashinsky equation $\partial_t u = -(1+\partial_x^2)^2 u + \mu u - \partial_x(u^2)$, which was originally derived to model flame propagation \cite{Kuramoto1980,Kuramoto1976,Sivashinsky1977}.
	
	System \eqref{eq:dynamic_ks} is already in the form \eqref{eq:dynamic_general_form}, with
	\begin{equation}
		\label{eq:LM_ks}
		\M = \mathbb O_{2,2}, \qquad 
		\Lop = - (1 + \partial_x^2)^2 \mathbb{I}_2 + \partial_x
		\begin{pmatrix}
			- 1 & 0 \\
			0 & 1
		\end{pmatrix} , 
	\end{equation}
	where $\mathbb O_{2,2}$ and $\mathbb{I}_2$ denote $2 \times 2$ zero and identity matrices respectively, and
	\[
	\N(\bm u,\mu,\eps) = 
	\left( \mu u + \partial_x (u^2 + uv + v^2) \right)
	\begin{pmatrix}
		1 \\
		1
	\end{pmatrix}.
	\]
	The static subsystem is given by 
	\begin{equation}
		\label{eq:static_ks}
		\begin{split}
			\partial_t u &= -(1 + \partial_x^2)^2 u - \partial_x u + \mu u + \partial_x (u^2 + uv + v^2) , \\
			\partial_t v &= -(1 + \partial_x^2)^2 v + \partial_x v + \mu v + \partial_x (u^2 + uv + v^2) .
		\end{split}
	\end{equation}
	Direct calculations show that Assumption \ref{ass:steady_state} is satisfied, and that linearisation about $\bm u^\ast = (0,0)$ leads to eigenfunctions $\widehat{\bm v} = (a_\xi , b_\xi)^\transpose \me^{i \xi x}$ with $a_\xi, b_\xi \in \mathbb C$, $\xi \in \R$ and two eigenvalues curves $\xi \mapsto \lambda_{1,2}(\xi,\mu)$ defined by
	\[
	\lambda_{1,2}(\xi,\mu) = -(1-\xi^2)^2 \mp i \xi + \mu, \qquad \xi \in \R.
	\]
	This expression can be used to verify Assumptions \ref{ass:symmetry}-\ref{ass:bifurcation}. Assumption \ref{ass:symmetry} is easily verified. For Assumption \ref{ass:bifurcation}, note that $\real \lambda_1$ coincides with the eigenvalue \eqref{eq:spectrum_sh} for the Swift-Hohenberg problem. Thus Assumption \ref{ass:bifurcation} is satisfied with $\xi_\pm(\delta)$ given by \eqref{eq:xi_pm_sh}. Since $\xi_c = 1 \neq 0$ and $\omega_c = - 1$, 
	we conclude that the static subsystem \eqref{eq:static_ks} has a Turing-Hopf bifurcation with critical modes $\xi = \pm \xi_c = \pm 1$ at $\mu = 0$, and that system \eqref{eq:dynamic_ks} has a dynamic Turing-Hopf bifurcation.
	
	\
	
	A modulation reduction 
	for the static subsystem \eqref{eq:static_ks} can be found in \cite{Schneider1997}. If we set $0 < \mu = \delta^2 \ll 1$, then the relevant ansatz is
	\begin{equation}
		\label{eq:approximation_ks}
		\begin{split}
			u &= \delta A_L(X_L, T) \me^{i (x - t)} + \delta^2 A_2 (X_L, T) \me^{2 i (x - t)} 
			+ \frac{1}{2} \delta^2 A_0^0 (X_L, T) + c.c., \\
			v &= \delta B_R(X_R, T) \me^{i (x + t)} + \delta^2 B_2 (X_R, T) \me^{2 i (x + t)} 
			+ \frac{1}{2} \delta^2 B_0^0 (X_R, T) + c.c.,
		\end{split}
	\end{equation}
	where $X_L = \delta (x + t)$, $X_R = \delta (x - t)$, $T = \delta^2 t$, and the $L,R$ notation emphasises the fact that $A_L$, $B_R$ denote amplitudes of left, right moving wave trains respectively. The following system of coupled complex GL-type equations can be derived:
	\begin{equation}
		\label{eq:modulation_eqn_static_ks}
		\begin{split}
			\partial_T A_L &= 4 \partial_{X_L} A_L + A_L - \tilde \gamma_1 A_L |A_L|^2 - \tilde \gamma_2 A_L |B_R|^2 , \\
			\partial_T B_R &= 4 \partial_{X_R} B_R + B_R - \tilde \gamma_1 B_R |B_R|^2 - \tilde \gamma_2 B_R |A_L|^2 ,
		\end{split}
	\end{equation}
	with constant coefficients $\tilde \gamma_1, \tilde \gamma_2 \in \mathbb C$ (see \cite{Schneider1997} or \cite[Ch.~10.8]{Schneider2017} for exact values). 
	A classical complex GL equation can only be obtained if $A_L=0$ or $B_R=0$ (both subspaces are invariant). An approximation theorem for system \eqref{eq:modulation_eqn_static_ks} is given in \cite{Schneider1997}. 
		
		
		\begin{remark}
			\label{rem:averaging}
			The derivation of system \eqref{eq:modulation_eqn_static_ks} involves the use of normal form transformations and averaging theory in order to push highly oscillatory in time terms back to higher orders in $\delta$.
		\end{remark}
		
		\begin{remark}
			For $p = 1$, coupled systems of generalised complex GL equations arise generically after modulation reduction in the weakly nonlinear regime associated to a Turing-Hopf bifurcation \cite{Cross1993,Frohoff2023}. One important example is the case of Taylor-Couette flow induced by the Navier-Stokes equations between strongly rotating cylinders; see \cite{Schneider1999} for a derivation of the (coupled complex GL type) modulation equations and corresponding approximation results.
		\end{remark}
		
		\subsection{(M4): Conserved dynamic long-wave bifurcation}
		\label{sub:case_iii_dynamic_stationary_long_wave_bifurcation}
		
		We consider a dynamic extension of a planar fluid flow problem appearing in \cite{Meshalkin1961}, where the statement of the problem is attributed to Kolmogorov. Subsequent treatments appeared in \cite{Nepomniashchii1976,Schneider1999b}. 
		
		The problem arises under the assumption of external forcing parallel to the $x$-axis, given by $\bm F(t,y) = A(t) \cos y \bm e_x$, where $\bm e_x = (1,0)^\transpose$. The primary difference in our formulation is that we allow the forcing amplitude to vary slowly in time according to $\dot A(t) = \tilde \eps$, where $0 < \tilde \eps = (\rho \nu^2 / \pi^3) \eps \ll 1$ and $\rho$, $\nu$ denote the constant density, viscosity of the fluid respectively. We assume that $\bm x = (x,y) \in \Omega = \R \times \widetilde \Omega = \R \times (-\pi, \pi)$, and require that the velocity field $\bm U = (u,v)$ and pressure $p$ satisfy the Navier-Stokes equations for an incompressible flow. This leads to
		\begin{equation}
			\label{eq:dynamic_ns_original}
				\partial_t \bm U = - \nabla p - (\bm U \cdot \nabla) \bm U + \Delta \bm U + \mathcal R \cos y \bm e_x , \qquad
				\dot {\mathcal R} = \eps , 
		\end{equation}
		where $\mathcal R(t) = A(t) \pi^3 / \rho \nu^2$ denotes the Reynolds number, together with the incompressibility and zero mean flow conditions
		\[
		\nabla \cdot \bm U = 0 , \qquad
		[u]_{\widetilde \Omega}(x) = \frac{1}{| \widetilde \Omega |} \int_{\widetilde \Omega} u(x,y) dy = 0 ,
		\]
		respectively (the latter is imposed as an additional constraint). We assume periodic boundary conditions for both $\bm U$ and $p$ in the $y$-direction.
		
		
		The static subsystem obtained by setting $\eps = 0$ in system \eqref{eq:dynamic_ns_original} is
		\begin{equation}
			\label{eq:static_ns_original}
			\partial_t \bm U = - \nabla p - (\bm U \cdot \nabla) \bm U + \Delta \bm U + \mathcal R \cos y \bm e_x ,
		\end{equation}
		which has a trivial steady state
		\[
		\bm U^\ast = \left( u^\ast, v^\ast \right) = (\mathcal R \sin y, 0) , \qquad p^\ast = const.
		\]
		The calculations in \cite{Meshalkin1961,Nepomniashchii1976} show that an instability appears at the critical Reynolds number $\mathcal R = \mathcal R^\ast = \sqrt 2$ (as can also be seen from the expression for the leading eigenvalue $\lambda(\xi,\mathcal R')$ below). Setting $\bm U' = (u',v') := (u - u^\ast,v - v^\ast)$, $p' = p - p^\ast$ and $\mathcal R' = \mathcal R - \mathcal R^\ast$ leads to the following system with an instability of the trivial solution $(u',v') = (0,0)$ at $p' = \mathcal R' = 0$:
		\begin{equation}
			\label{eq:dynamic_ns}
				\partial_t \bm U' = - (\bm U' \cdot \nabla) \bm U' - \nabla p' + \Delta \bm U' - (\mathcal R' + \mathcal R^\ast) \textup{W} \bm U' + \eps \bm Q , \qquad
				\dot{\mathcal R'} = \eps ,
		\end{equation}
		where
		\[
		\textup{W} = 
		\begin{pmatrix}
			\sin y \ \partial_x & \cos y \\
			0 & \sin y \ \partial_x
		\end{pmatrix}
		, \qquad 
		\bm Q =
		\begin{pmatrix}
			- \sin y \\
			0
		\end{pmatrix} ,
		\]
		together with
		\begin{equation}
			\label{eq:ns_constraints}
			\nabla \cdot \bm U' = 0 , \qquad
			[u']_{\widetilde \Omega} = \frac{1}{| \widetilde \Omega |} \int_{\widetilde \Omega} u'(x,y) dy = 0 .
		\end{equation}
		In order to write this in the general form \eqref{eq:dynamic_general_form}, we append the trivial equation $\partial_t p = 0$. This allows us to write the equations in the form \eqref{eq:dynamic_general_form} with $\bm u = (\bm U', p)^\transpose$,
		\begin{equation}
			\label{eq:LM_ns}
			\M = 
			\begin{pmatrix}
				0 & - \mathcal R^\ast \cos y & 0 \\
				\ & \mathbb O_{2,3} & \ 
			\end{pmatrix} , 
			\qquad 
			\Lop = 
			\begin{pmatrix}
				- \mathcal R^\ast \sin y \mathbb I_2 \partial_x + \Delta & \nabla^\transpose \\
				\mathbb O_{1,2} & 0
			\end{pmatrix} ,
		\end{equation}
		where $\nabla^\transpose = (\partial_x, \partial_y)^\transpose$ and $\mathbb O_{j,k}$ denotes the $j \times k$ zero matrix, and
		\begin{equation}
			\label{eq:N_ns}
			\N(\bm U',p',\mathcal R',\eps) = 
			\begin{pmatrix}
				- (\bm U' \cdot \nabla) \bm U' 
				- \mathcal R' \textup{W} \bm U' + \eps \bm Q \\
				0
			\end{pmatrix} .
		\end{equation}
		The static subsystem for $\eps = 0$, i.e.
		\begin{equation}
			\label{eq:static_ns}
			\partial_t \bm U' = - (\bm U' \cdot \nabla) \bm U' - \nabla p' + \Delta \bm U' - (\mathcal R' + \mathcal R^\ast) \textup{W} \bm U' , \qquad 
			\partial_t p = 0,
		\end{equation}
		satisfies Assumption \ref{ass:steady_state} with $\bm u^\ast = ({\bm U'}^\ast, p^\ast) = (\bm 0, 0)$. In particular, the leading eigenvalue curve has the following expansion about $\xi=0$:
		\[
		\lambda(\xi,\mathcal R') = - \left(1 - \frac{(\mathcal R' + \mathcal R^\ast)^2}{2}\right) \xi^2 - (\mathcal R' + \mathcal R^\ast)^2 \left( 1 + \frac{(\mathcal R' + \mathcal R^\ast)^2}{4} \right) \xi^4 + O(\xi^6) ,
		\]
		see e.g.~\cite{Nepomniashchii1976,Schneider1999b}. Note that $\xi_c = 0$. The ordering requirement $\real \lambda_{j+1} \geq \real \lambda_j$ in Assumption \ref{ass:symmetry} follows from the fact that we impose periodic boundary conditions on $\widetilde \Omega$ together with the translation invariance of the linear operator $\M + \Lop$ in $x$.
		
		\
		
		The present problem is, however, distinguished from the others considered in this work due to the fact that Assumption \ref{ass:bifurcation} (i) and (iii) are violated. This is because $\lambda(0,\mathcal R') = 0$ for all $\mathcal R' \in \R$. This situation is typical in the presence of a \textit{conservation law} \cite{Cross1993,Frohoff2023}. It arises in the present case because the incompressibility condition is equivalent to the conservation of mass, since
		\[
		\rho = const. \ \implies \ \frac{\partial \rho}{\partial t} + \rho \nabla \cdot \bm U' = \rho \nabla \cdot \bm U' = 0 .
		\]
		The static subsystem \eqref{eq:static_ns} satisfies a slightly weaker version of Assumption \ref{ass:bifurcation} for which the conditions (i) and (iii) are replaced by
		\begin{enumerate}
			\item[(i)$'$] $\mu < 0 \ \implies \ \real \lambda_1(\xi,\mu) < 0$ for 
			all $\xi \neq 0$;
			\item[(iii)$'$] $\mu > 0 \ \implies \ \real \lambda_1(\xi,\mu) > 0$ for all $\xi$ such that $|\xi| \in (\xi_-(\delta), 0) \cup (0, \xi_+(\delta))$, where $\xi_\pm(\delta)$ are continuous functions depending on $\mu =: \delta^2 \ll 1$ satisfying $\xi_-(0) = \xi_+(0)$. Moreover, $\real \lambda_1(\xi_\pm(\delta),\mu) = 0$ and $\real \lambda_1(\xi,\mu) < 0$ for all $\xi$ such that $|\xi| \notin [\xi_-(\delta), \xi_+(\delta)]$.
		\end{enumerate} 
		Direct calculations show that the modified Assumption \ref{ass:bifurcation}$'$ is satisfied with $\xi_\pm(\delta) = \pm \sqrt{2 / 3} \delta + O(\delta^2)$. Since $\xi_c = \omega_c = 0$ and the static subsystem \eqref{eq:static_ns} satisfies Assumptions \ref{ass:steady_state}, \ref{ass:symmetry} and \ref{ass:bifurcation}$'$, we say that there is a \textit{conserved long-wave bifurcation} at $\mathcal R' = 0$ and, therefore, that system \eqref{eq:dynamic_ns} undergoes a \textit{conserved dynamic long-wave bifurcation}.
		
		\
		
		
		A modulation reduction for the static subsystem \eqref{eq:static_ns} can be found in \cite{Nepomniashchii1976}. The idea is to expand in powers of $\mathcal R' = \delta^2 \ll 1$ as follows:
		\begin{equation}
			\label{eq:approximation_ns}
			\begin{split}
				u'(x,y,t) &= \delta \sum_{j=0}^\infty u^{(j)}(X,y,T) \delta^j , \\
				v'(x,y,t) &= \delta \sum_{j=0}^\infty v^{(j)}(X,y,T) \delta^j , \\
				p'(x,y) &= \delta^2 \sum_{j=0}^\infty p^{(j)}(X,y) \delta^j , 
			\end{split}
		\end{equation}
		where $X = \delta x$ and $T = \delta^4 t$. The formal calculations leads to an approximation of the form
		\[
		\begin{pmatrix}
			u'(x,y,t) \\
			v'(x,y,t)
		\end{pmatrix}
		= 
		\delta A(X,T) \bm \varphi
		+ O(\delta^2) ,
		\]
		where $\bm \varphi = (-\sqrt 2 \cos y, 1)^\transpose$ and the function $A(X,T) = v^{(0)}(X,T) \in \R$ satisfies a modulation equation of Cahn-Hilliard type:
		\begin{equation}
			\label{eq:modulation_eqn_static_ch}
			\partial_T A = - 3 \partial_X^4 A - 3 \partial_X^2 A + \frac{2}{3} \partial_X^2 \left( A^3 \right) .
		\end{equation}
		Approximation and attractivity results for the modulation equation \eqref{eq:modulation_eqn_static_ch} have been proven in \cite{Schneider1999b}.

		\section{Geometric blow-up and the method of multiple scales}
		\label{sec:geometric_blow-up_and_the_modulation_equations}
		
		In this section we apply the geometric blow-up method developed for PDEs in \cite{Jelbart2022} to the general system \eqref{eq:dynamic_general_form}. We need to extend and generalise the method, which has so far only been developed for the study of dynamic Turing bifurcations, so that it can also be used to study dynamic bifurcations of Hopf, Turing-Hopf and long-wave type. This is achieved in two steps:
		\begin{enumerate}
			\item[(I)] We apply a weighted geometric blow-up transformation centered at $\bm u = \bm u^\ast = \bm 0$ and $\mu = \eps = 0$, without imposing any specific structure on $\bm u$ other than $\bm u = O(r)$, where $r \geq 0$ measures the distance from the blow-up manifold.
			\item[(II)] We apply a dynamic variant of the method of multiple scales to the blown-up problem, in order to formally reduce the equations to simpler equations of modulation type.
		\end{enumerate}
		As noted above, Step (I) has already been achieved for the Swift-Hohenberg problem in \cite{Jelbart2022}. The primary contribution of the present article is to show that Step (II) provides a natural and systematic setting for extending this work so that it can be applied to study a larger class of bifurcations, and therefore to the emergence of spatial, temporal and spatio-temporal patterns in the fast-slow setting. It is worthy to emphasise at this stage that the method of multiple scales is a purely \textit{formal} asymptotic method (see e.g.~\cite{Kevorkian2012,Nayfeh2008}). We therefore only obtain formal results by the approach outlined by the Steps (I)-(II) above. In the absence of a spectral gap and center manifold type theorems, however, this is to be expected. For a rigorous treatment, a third step is necessary:
		\begin{enumerate}
			\item[(III)] Show that solutions of the original system \eqref{eq:general_pde} and the modulation equation obtained via Steps (I)-(II) are `close'.
		\end{enumerate}
		Since the modulation equation obtained via Steps (I)-(II) is posed in the blown-up space, the error estimates must also be obtained in the blown-up space. We are primarily focused on Steps (I)-(II) in this work. Step (III), which has already been carried out for the Swift-Hohenberg equation in \cite{Jelbart2022}, is left for future work.
		
		\begin{remark}
			Steps (I)-(III) are in direct correspondence with Steps (I)-(III) from Section \ref{sec:introduction}.
		\end{remark}
		
		\begin{remark}
			The aim in Step (III) is to prove the so-called `approximation property' in the fast-slow setting. The other important property for a modulation equation -- attractivity -- is expected to hold for large spaces of initial conditions in the strongly attracting regime $\mu(0) < -c < 0$. Since this is the typical situation in fast-slow applications, we omit the additional requirement in Step (III).
		\end{remark}

		\subsection{Step (I): Geometric blow-up}
		\label{sub:geometric_blow_up}
		
		Consider the general system \eqref{eq:dynamic_general_form}. In order to apply the geometric blow-up method, we first extend the system by including $\eps$ as an additional variable, i.e.~we consider
		\begin{equation}
			\label{eq:dynamic_general_extended}
			\partial_t \bm u = \left( \M + \Lop \right) \bm u + \N(\bm u, \mu, \eps) , \qquad
			\dot \mu = \eps , \qquad
			\dot \eps = 0 .
		\end{equation}
		We consider only cases for which the static equation with $\eps = 0$, i.e.~equation \eqref{eq:static_general_form}, has a bifurcation of the stationary state $\bm u = \bm 0$ at $\mu = 0$ which is known to be well-approximated for $0 < \mu = \delta^2 \ll 1$ by a static multi-scale ansatz of the form
		\begin{equation}
			\label{eq:static_ansatz}
			\bm u(\bm x,t) = \delta \bm \psi(\ \cdots, \bm X, T) ,
		\end{equation}
		where $\bm X = (\delta \bm x_{ub}, \bm x_b)$, $T = \delta^\beta t$ for some positive integer $\beta$, and the `$\cdots$' notation indicates that $\bm \psi$ may or may not depend on the original space $\bm x$ and time $t$. Based on the form of the multi-scale ansatz for the model problems presented in Section \ref{sec:dynamic_Turing_instability}, we are interested in all four possibilities:
		\begin{equation}
			\label{eq:psi_options}
			\bm \psi(x, \bm X, T) , \qquad
			\bm \psi(t, \bm X, T) , \qquad
			\bm \psi(x, t, \bm X, T) , \qquad
			\bm \psi(\bm X, T) ,
		\end{equation}
		recall equations \eqref{eq:approximation_sh}, \eqref{eq:approximation_b}, \eqref{eq:approximation_ks} and \eqref{eq:approximation_ns} respectively.
		
		The static transformation \eqref{eq:static_ansatz} can be `made dynamic' by reformulating it in terms of a geometric blow-up transformation
		\[
		\Phi : [0,\infty) \times \mathcal X \times S^1 \to \mathcal X \times \R \times [0,\eps_0] 
		\]
		defined by
		\begin{equation}
			\label{eq:general_blow-up}
			r(\bar t) \geq 0, \ \bm \psi(\cdots, \cdot, \bar t) \in \mathcal X, \ (\bar \mu, \bar \eps) \in S^1 \mapsto
			\begin{cases}
				\bm u(\cdot,t) = r(\bar t) \bm \psi(\cdots, \cdot,\bar t) , \\
				\mu = r(\bar t)^2 \bar \mu(\bar t) , \\
				\eps = r(\bar t)^{2 + \beta} \bar \eps(\bar t) ,
			\end{cases}
		\end{equation}
		where $\mathcal X$ is a suitable Banach space (we need not specify it here as we focus purely on the formal asymptotic analysis), $S^1$ is the unit circle in $\R^2$, and 
		the so-called \textit{desingularized} space $\bar{\bm x}$ and time $\bar t$ are defined via
		\begin{equation}
			\label{eq:general_desingularization}
			\partial_{x_j} = 
			\begin{cases}
				r(\bar t) \partial_{\bar x_j} , & j = 1, \ldots, p, \\
				\partial_{x_j} , & j = p+1, \ldots, n , 
			\end{cases}
			\qquad 
			\partial_t = r(\bar t)^\beta \partial_{\bar t} ,
		\end{equation}
		where $\beta$ is the same positive integer as in \eqref{eq:static_ansatz}. Note that only the $x_j$ corresponding to unbounded spatial directions are desingularized. Geometrically, the set $\{\mu = \eps = 0\}$ is blown up to the `cylinder' $\{r = 0\} \times \mathcal X \times S^1$. This is sketched in Figure \ref{fig:blow_up}; see also the caption for additional explanations.
		
		\begin{figure}[t!]
			\centering
			\includegraphics[width=0.99\textwidth]{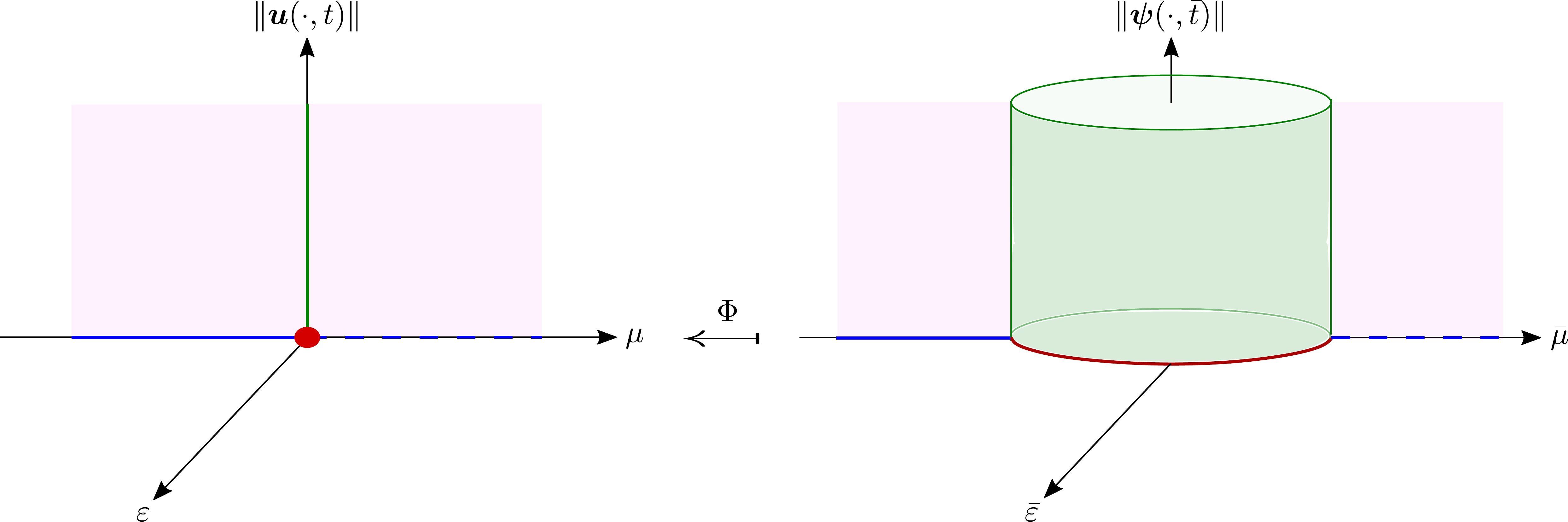}
			\caption{The geometry before (left) and after (right) applying the blow-up map $\Phi$ defined by \eqref{eq:general_blow-up}-\eqref{eq:general_desingularization}. Here $\| \cdot \|$ denotes the norm in the (unspecified) Banach space $\mathcal X$. The set $\{\mu = \eps = 0\}$ (green) in the original space is blown-up to the `cylinder' (shaded green) in the blown-up space. Both $\{\mu < 0, \eps = 0 \}$ and $\{\mu > 0, \eps = 0 \}$, which are invariant under $\Phi$, and their preimages are shown in shaded magenta. The linearly stable and unstable branches $\{ (\bm u, \mu, \eps) = (\bm u^\ast,\mu,0) : \mu < 0 \}$ and $\{ (\bm u, \mu, \eps) = (\bm u^\ast,\mu,0) : \mu > 0 \}$ respectively, as well as their preimages under $\Phi$, are indicated by blue and dashed blue lines respectively. The singularity at the origin and its preimage under $\Phi$ are shown in red.}
			\label{fig:blow_up}
		\end{figure}
		
		The form of the blow-up map defined by \eqref{eq:general_blow-up}-\eqref{eq:general_desingularization} is determined by two factors:
		\begin{itemize}
			\item The form of the corresponding static ansatz \eqref{eq:static_ansatz};
			\item The shape of the spectrum associated to the static subsystem when $\mu = \delta^2 \ll 1$.
		\end{itemize}
		The form of the static ansatz for $\bm u$ determines the form of the dynamic ansatz, because one simply replaces the small parameter $\delta$ with the time-dependent function $r(\bar t) \geq 0$ which measures the radius of solutions from the static bifurcation point. 
		Since $\delta$ has been replaced by a time-dependent variable $r(\bar t)$, the simple rescalings $\bm X = (\delta \bm x_{ub}, \bm x_b)$ and $T = \delta^\beta t$ need to be replaced by the desingularizations in \eqref{eq:general_desingularization}. The `weights' associated to the desingularization (i.e.~the exponents of $r$), can be read directly off of the corresponding weights for the (presumably known) static ansatz, which are determined by the spectrum of the static problem for $0 < \mu = \delta^2 \ll 1$; recall Remark \ref{rem:scaling}. Of course, $\mu(t)$ and $\eps$ are also coupled to the blow-up ansatz via their dependence on $r(\bar t)$. The relevant weights for their defining equations are determined by the requirement that the blown-up vector field is well-defined and non-trivial as $r \to 0$.
		
		\begin{remark}
			The blow-up transformation in defined by \eqref{eq:general_blow-up}-\eqref{eq:general_desingularization} reduces to the static transformation \eqref{eq:static_ansatz} after an invariant restriction to $\{ (\bar \eps,\bar \mu) = (0,1) \in S^1,  \mu = r^2 = const. > 0 \}$. This shows that the blow-up transformation can be viewed as a `dynamic generalisation' of the classical parameter dependent ansatz.
		\end{remark}
		
		\begin{remark}
			Applications with spatial anisotropy are expected to require a more complicated spatial desingularization of the form
			\[
			\partial_{x_j} = r(\bar t)^{\nu_j} \partial_{\bar x_j} , \qquad 
			j = 1, \ldots, p,
			\]
			for (possibly distinct) $\nu_j > 0$. In order to study the emergence of anisotropic rolls emerging parallel to the $x$-axis associated with a dynamic Turing bifurcation in the Newell-Whitehead equation, for example, the relevant desingularization is expected to take the form
			\[
			\partial_{x_1} = r(\bar t) \partial_{\bar x_1} , \qquad 
			\partial_{x_2} = r(\bar t)^{1/2} \partial_{\bar x_2} ,
			\]
			since the static problem features an $O(\delta)$-wide band of unstable modes $\xi_1$ and an $O(\delta^{1/2})$-wide band of unstable modes $\xi_2$ \cite{Newell1969,Segel1969}. 
			We do not consider this more complicated case in this work, since the particular form in \eqref{eq:general_desingularization} for which $\nu_j = 1$ for all $j = 1, \ldots, p$ is sufficient to analyse the model problems we consider.
		\end{remark}
		
		Applying the blow-up map to system \eqref{eq:dynamic_general_extended} leads to the following equation, posed in global coordinates in the blown-up space:
		\begin{equation}
			\label{eq:psi_eqn}
			\partial_{\bar t} \bm \psi = r^{-\beta} \left(\M + \Lop \right) \bm \psi - r^{-1} \bm \psi \partial_{\bar 	t} r + r^{-1 - \beta} \N(r \bm \psi, r^2 \bar \mu, r^{2 + \beta} \bar \eps) .
		\end{equation}
		For the time being, we continue to view $\Lop$ and $\N$ as functions of $\bm x$, as opposed to $\bar{\bm x}$. As we shall see in the following, a suitable choice for $\bm \psi$ and $\beta$ (depending on the problem) guarantees that the remaining terms are well-defined and non-trivial as $r \to 0$.
		
		\begin{figure}[t!]
			\centering
			\includegraphics[width=0.45\textwidth]{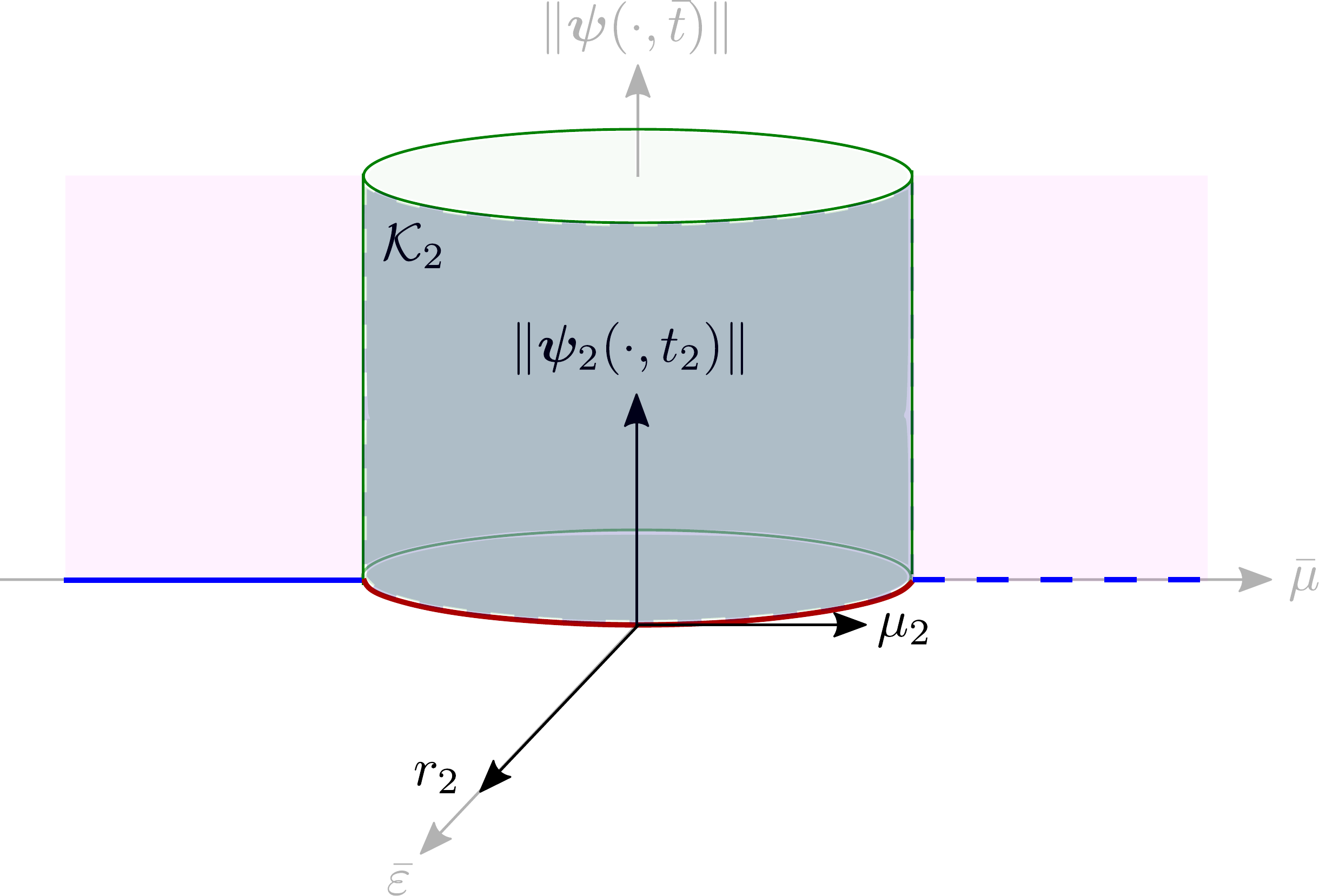}
			
			\includegraphics[width=0.45\textwidth]{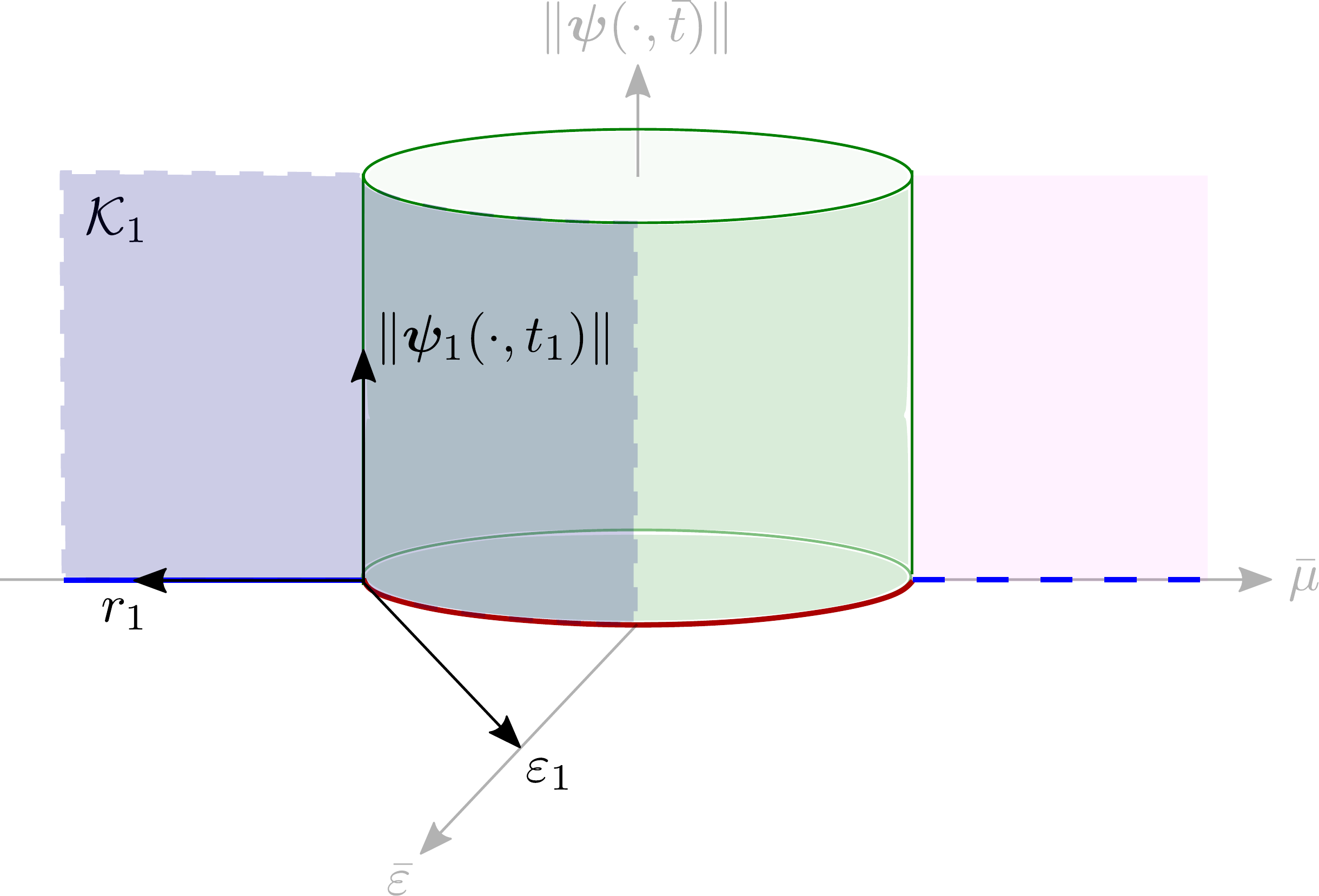} \qquad
			\includegraphics[width=0.45\textwidth]{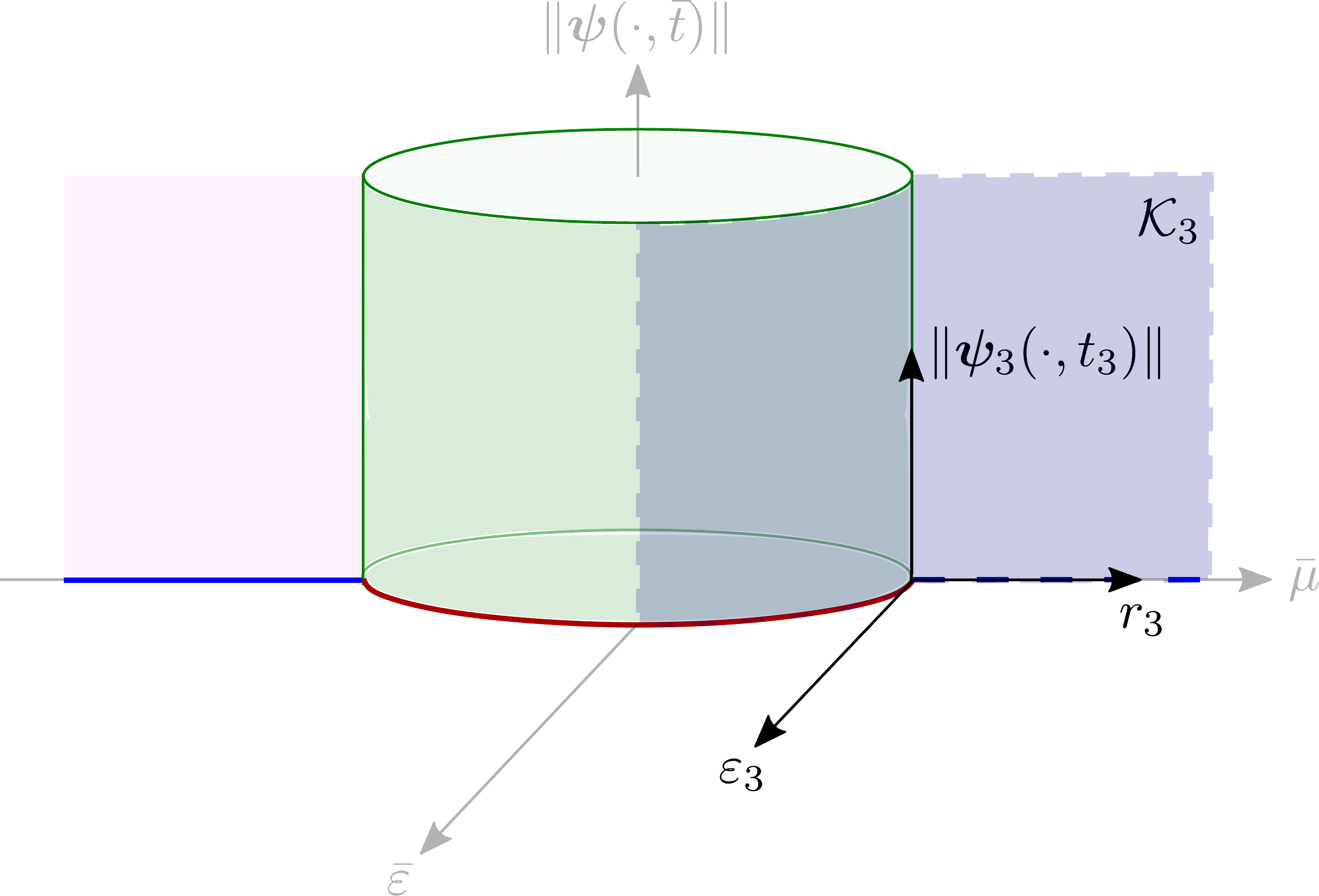}
			\caption{Local coordinate axes for the coordinate charts $\mathcal K_i$ defined by \eqref{eq:coordinates}, $i=1,2,3$. Projections of the `visible part' of blown-up space onto the invariant subspace defined by $\{r = 0 \} \cup \{ \eps = 0\}$ are shown in shaded blue. Solutions approach a neighbourhood of the blow-up cylinder in the region visible in $\mathcal K_1$ (bottom left), pass `over' the blow-up cylinder via the region visible in $\mathcal K_2$ (top), and leave a neighbourhood of the blow-up cylinder via the region visible in $\mathcal K_3$ (bottom right). The dynamics in overlapping charts are related by the smooth transformations in \eqref{eq:kappa_ij}.}
			\label{fig:charts}
		\end{figure}
		
		For specific calculations it is convenient to work in local coordinate charts which we denote by
		\[
		\mathcal K_1 : \ \bar \mu = -1, \qquad 
		\mathcal K_2 : \ \bar \eps = 1, \qquad 
		\mathcal K_3 : \ \bar \mu = 1,
		\]
		and for which we introduce the following chart-specific coordinates:
		\begin{equation}
			\label{eq:coordinates}
			\begin{aligned}
				\mathcal K_1 &: \ (\bm u, \mu, \eps) = (r_1 \bm \psi_1, -r_1^2, r_1^{2 + \beta} \eps_1) , \\
				\mathcal K_2 &: \ (\bm u, \mu, \eps) = (r_2 \bm \psi_2, r_2^2 \mu_2, r_2^{2 + \beta}) , \\
				\mathcal K_3 &: \ (\bm u, \mu, \eps) = (r_3 \bm \psi_3, r_3^2, r_3^{2 + \beta} \eps_3) .
			\end{aligned}
		\end{equation}
		The local coordinates in each chart are represented in Figure \ref{fig:charts}. We shall also denote the time and space in chart $\mathcal K_i$ (defined via \eqref{eq:general_desingularization}) by $t_i$ and $\bm x_i$, respectively. If necessary we will write $\bm x_i = (x_{1,i},\ldots,x_{n,i})$ to indicate the componentwise notation of $\bar x$ in charts, however we shall also continue to write $\bm x = (x_1, \ldots, x_n)$ (the meaning will be clear from context).
		
		In chart $\mathcal K_1$ we obtain the equations
		\begin{equation}
			\label{eq:K1_eqns}
			\begin{split}
				\partial_{t_1} \bm \psi_1 &= r_1^{-\beta} \left(\M + \Lop \right) \bm \psi_1 + \frac{1}{2} \bm \psi_1 \eps_1 + r_1^{-1-\beta} \N(r_1 \bm \psi_1, -r_1^2, r_1^{2+\beta}\eps_1 ) , \\
				\dot r_1 &= - \frac{1}{2} r_1 \eps_1 , \\
				\dot \eps_1 &= \frac{2 + \beta}{2} \eps_1^2 ,
			\end{split}
		\end{equation}
		in chart $\mathcal K_2$ we obtain the equations
		\begin{equation}
			\label{eq:K2_eqns}
			\begin{split}
				\partial_{t_2} \bm \psi_2 &= r_2^{-\beta} \left(\M + \Lop \right) \bm \psi_2 + r_2^{-1-\beta} \N(r_2 \bm \psi_2, r_2^2 \mu_2, r_2^{2+\beta} ) , \\
				\dot \mu_2 &= 1 ,
			\end{split}
		\end{equation}
		where $\dot r_2 = 0$, i.e.~$r_2 \ll 1$ is a perturbation parameter, and in chart $\mathcal K_3$ we obtain the equations
		\begin{equation}
			\label{eq:K3_eqns}
			\begin{split}
				\partial_{t_3} \bm \psi_3 &= r_3^{-\beta} \left(\M + \Lop \right) \bm \psi_3 - \frac{1}{2} \bm \psi_3 \eps_3 + r_3^{-1-\beta} \N(r_3 \bm \psi_3, r_3^2, r_3^{2+\beta}\eps_3 ) , \\
				\dot r_3 &= \frac{1}{2} r_3 \eps_3 , \\
				\dot \eps_3 &= - \frac{2 + \beta}{2} \eps_3^2 ,
			\end{split}
		\end{equation}
		where by a slight abuse of notation the overdot notation denotes differentiation with respect to the desingularized time $t_i$ in chart $\mathcal K_i$, $i=1,2,3$.
		
		\begin{remark}
			The equations for $\dot r_i$ in \eqref{eq:K1_eqns}, \eqref{eq:K2_eqns} and \eqref{eq:K3_eqns} show that $\partial_{\bar t} r = O(r)$ as $r \to 0$. It follows that the term $r^{-1} \partial_{\bar t} r$ in \eqref{eq:psi_eqn} is $O(1)$ with respect to $r \to 0$. We shall frequently use this fact in the formal calculations which follow.
		\end{remark}
		
		The change of coordinates transformations which allow one to move from chart $\mathcal K_1$ to $\mathcal K_2$ and from chart $\mathcal K_2$ to $\mathcal K_3$ are given by
		\begin{equation}
			\label{eq:kappa_ij}
			\begin{aligned}
				\kappa_{12} : \ & \bm \psi_1 = \frac{\bm \psi_2}{\sqrt{-\mu_2}} , && r_1 = r_2 \sqrt{-\mu_2} , && \eps_1 = \frac{1}{(-\mu_2)^{(2+\beta) / 2}} , && \mu_2 < 0 , \\ 
				\kappa_{23} : \ & \bm \psi_2 = \frac{\bm \psi_3}{\eps_3^{1 / (2 + \beta)}} , && \mu_2 = \frac{1}{\eps_3^{2 / (2 + \beta)}} , && 	r_2 = r_3 \eps_3^{1 / (2 + \beta)} , && \eps_3 > 0 ,
			\end{aligned}
		\end{equation}
		respectively. Finally, we note that the equations for $r_1(t_1)$, $\eps_1(t_1)$, $\mu_2(t_2)$, $r_3(t_3)$ and $\eps_3(t_3)$ can be solved by direct integration. We have
		\begin{equation}
			\label{eq:ode_solutions}
			\begin{split}
				r_1(t_1) &= \frac{r_1(0)}{2^{1/(2+\beta)}} \left( 2 - (2+\beta) \eps_1(0) t_1 	\right)^{1/(2+\beta)} , \\
				\eps_1(t_1) &= \frac{2 \eps_1(0)}{2 - (2 + \beta) \eps_1(0) t_1} , \\
				\mu_2(t_2) &= \mu_2(0) + t_2, \\
				r_3(t_3) &= \frac{r_3(0)}{2^{1/(2+\beta)}} \left( 2 + (2+\beta) \eps_3(0) t_3 \right)^{1/(2+\beta)} , \\
				\eps_3(t_3) &= \frac{2 \eps_3(0)}{2 + (2 + \beta) \eps_3(0) t_3} .
			\end{split}
		\end{equation}
		
		\
		
		It remains to simplify the equation for $\bm \psi$ or, equivalently, those for $\bm \psi_i$ in charts $\mathcal K_i$, $i=1,2,3$. In particular, in order to obtain a simpler `desingularized problem' on the blow-up surface, these equations should be formally well-defined as $r \to 0$.
		
		
		\subsection{Step (II): Modulation reduction via the method of multiple scales}
		\label{sub:method_of_multiple_scales}
		
		
		A key insight of modulation theory is that even in the absence of a spectral gap, the dynamics near the onset of an instability should be dominated by a relatively small (but still uncountably infinite) subset of wavenumbers about the unstable modes. This observation motivates the introduction of multiple scales approaches. Our aim in this section is to extend this approach to the fast-slow setting, by coupling the geometric blow-up method introduced in Section \ref{sub:geometric_blow_up} to the formal multi-scale perturbation method known as the \textit{method of multiple scales}. 
		%
		%
		In this context, modulation equations are derived as solutions to formal solvability conditions. We briefly outline the general idea here, up to a point, however the specifics of the relevant solvability condition can depend on the problem. We therefore defer the formulation and statement of the particular conditions for each of the model problems from Section \ref{sec:dynamic_Turing_instability} to Section \ref{sec:model_problems}.
		
		The first step is to consider $\bm \psi$ as a function of $\bar{\bm x}$, $\bar t$, as well as $\bm x$ and/or $t$ (depending on which expression from \eqref{eq:psi_options} we have). Importantly, $\bar{\bm x}, \bar t , \bm x$ and $t$ are treated as \textit{independent variables}. We then expand terms in powers of $r(\bar t)$. This is reminiscent of the `classical' method of multiple scales, except for the following additional complications:
		\begin{itemize}
			\item The usual small parameter $\delta$ has been replaced by a small but time-dependent variable $r(\bar t)$;
			\item As a consequence of the above, simple rescalings of time and space have been replaced by time-dependent desingularizations defined by \eqref{eq:general_desingularization};
			\item We work in a non-trivial geometry (the blown-up space). 
		\end{itemize}
		Nevertheless, we may proceed as usual and account for additional complications as they arise.
		
		
		Since $\bar{\bm x}, \bar t , \bm x$ and $t$ are treated as independent variables, we need to replace partial derivatives as follows:
		\begin{equation}
			\label{eq:partial_derivatives}
			\partial_{x_j} \mapsto  
			\begin{cases}
				\partial_{x_j} + r \partial_{\bar x_j} , & j = 1, \ldots, p, \\
				\partial_{x_j} , & j = p+1, \ldots, n , 
			\end{cases}
			\qquad 
			\partial_t \mapsto \partial_t + r^\beta \partial_{\bar t} .
		\end{equation}
		We now write $\bm \psi$ as an expansion
		\begin{equation}
			\label{eq:psi_series}
			\bm \psi(\cdots,\bar x,\bar t) = \sum_{k=0}^\infty \bm \psi^{(k)}(\cdots, \bar x, \bar t) r(\bar t)^k ,
		\end{equation}
		where the vector-valued functions $\bm \psi^{(k)} = (\psi^{(k)}_1, \ldots, \psi^{(k)}_N)^\transpose$ are formally $O(1)$ with respect to $r$ as $r \to 0$.
		
		\begin{remark}
			In the following we use superscripts to denote constant or functional coefficients in expansions, and subscripts to specify vector components and/or local coordinates in $\mathcal K_i$.
		\end{remark}
		
		We also need expansions for $\Lop$ and $\N$:
		\[
		\Lop \mapsto \mathcal L = \sum_{k=0}^{m} \mathcal L^{(k)} r^k , \qquad
		\N \mapsto \mathcal N = r \sum_{k=0}^\infty \mathcal N^{(k)} r^k ,
		\]
		where the total number of terms in the $\mathcal L$ expansion is determined by the order $m$ of the differential operator $\Lop$. The expansion for $\N$ will not be specified in greater detail, since we have not assumed a particular structure or form beyond `sufficient regularity' and the requirement \eqref{eq:N_cond_eps}. These two properties suffice to explain why the series for $\N$ begins at $O(r)$, however, since
		\[
		\N(\bm 0, \mu, 0) = 0 \ \implies \ 
		\lim_{r \to 0} \N(r \bm \psi, r^2 \bar \mu, r^{2 + \beta} \bar \eps) = \N(\bm 0, 0, 0) = 0 .
		\]
		The operators $\mathcal L^{(k)}$ defining $\mathcal L$ are described in the following.
		
		\begin{lemma}
			\label{lem:L}
			The components of $\mathcal L^{(l)} = \textup{diag} ( \mathcal L^{(l)}_1, \mathcal L^{(l)}_2, \ldots, \mathcal L_N^{(l)} )$ are given by
			\begin{equation}
				\label{eq:mathcal_Lj}
				\mathcal L_j^{(l)} = \sum_{|\alpha| \leq m} a_{j,\alpha}(\bm x) \mathcal D_x^{(\alpha,l)} ,
			\end{equation}
			where
			\[
			\mathcal D_x^{(\alpha,l)} = 
			\begin{cases}
				\left( \sum_{\substack{q_1, q_2, \ldots , q_p \geq 0 \\ q_1 + q_2 + \cdots + q_p = l}} \prod_{k=1}^p C_{\alpha_k q_k} \left( \partial_{x_k}, \partial_{\bar x_k} \right) \right) \prod_{s=p+1}^n \partial_{x_s}^{\alpha_s} , & p \in \{1, \ldots, n-1\}, \\
				\sum_{\substack{q_1, q_2, \ldots , q_p \geq 0 \\ q_1 + q_2 + \cdots + q_p = l}} \prod_{k=1}^p C_{\alpha_k q_k} \left( \partial_{x_k}, \partial_{\bar x_k} \right) , & p = n, \\
			\end{cases}
			\]
			and
			\begin{equation}
				\label{eq:Calphak}
				C_{\alpha_k q_k}\left( \partial_{x_k}, \partial_{\bar x_k} \right) =
				\begin{cases}
					{\alpha_k \choose q_k} \partial_{x_k}^{\alpha_k - q_k} \partial_{\bar x_k}^{q_k} ,
					& q_k \leq \alpha_k, \\
					0 & q_k >  \alpha_k ,
				\end{cases}
			\end{equation}
			for each $k = 1,\dots, p$. In particular $\mathcal L^{0} = \Lop$ (it only depends on $\bm x$), and 
			the components of $\mathcal L \bm \psi = ( \mathcal L_1 \psi_1, \mathcal L_2 \psi_2, \ldots, \mathcal L_N \psi_N )^\transpose$ have the form
			\begin{equation}
				\label{eq:Lj_psij}
				\mathcal L_j \psi_j = \sum_{s \geq 0} \left( \sum_{q=0}^s \mathcal L_j^{(q)} \psi^{(s-q)}_j \right) r^s .
			\end{equation}
		\end{lemma}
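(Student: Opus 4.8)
The plan is a direct computation: substitute the replacement rule \eqref{eq:partial_derivatives} into each monomial of $\Lop_j$ and keep track of the powers of $r$ that appear. First I would apply the substitution to a single term $a_{j,\alpha}(\bm x)\,\Diff_{\bm x}^\alpha = a_{j,\alpha}(\bm x)\,\big(\prod_{k=1}^p \partial_{x_k}^{\alpha_k}\big)\big(\prod_{s=p+1}^n \partial_{x_s}^{\alpha_s}\big)$, which becomes $a_{j,\alpha}(\bm x)\,\big(\prod_{k=1}^p (\partial_{x_k}+r\partial_{\bar x_k})^{\alpha_k}\big)\big(\prod_{s=p+1}^n \partial_{x_s}^{\alpha_s}\big)$; only the $p$ unbounded directions are affected, which is the reason for the two cases in the definition of $\mathcal D_x^{(\alpha,l)}$. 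The crucial structural observation, used repeatedly, is that in the multiple-scales ansatz $\bm x$, $\bar{\bm x}$, $t$, $\bar t$ are treated as independent variables, so $\partial_{x_k}$ and $\partial_{\bar x_k}$ commute, and $r=r(\bar t)$ commutes with every spatial derivative and with the $\bm x$-dependent coefficients. Hence the ordinary binomial theorem applies factor by factor: $(\partial_{x_k}+r\partial_{\bar x_k})^{\alpha_k} = \sum_{q_k=0}^{\alpha_k}\binom{\alpha_k}{q_k}\partial_{x_k}^{\alpha_k-q_k}\partial_{\bar x_k}^{q_k}\,r^{q_k}$, which is exactly $\sum_{q_k\ge 0} C_{\alpha_k q_k}(\partial_{x_k},\partial_{\bar x_k})\,r^{q_k}$ with $C_{\alpha_k q_k}$ as in \eqref{eq:Calphak}, the convention $C_{\alpha_k q_k}=0$ for $q_k>\alpha_k$ truncating the sum.

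Next I would multiply out the product over $k=1,\dots,p$ of these finite sums. Because operators acting in distinct variables commute and $r$ commutes with all of them, every $r^{q_k}$ can be pulled to the left, so the product equals $\sum_{q_1,\dots,q_p\ge 0}\big(\prod_{k=1}^p C_{\alpha_k q_k}\big)\,r^{q_1+\cdots+q_p}$; grouping the terms by the total exponent $l=q_1+\cdots+q_p$ reproduces $\mathcal D_x^{(\alpha,l)}$ precisely. Summing over $|\alpha|\le m$ with the coefficients $a_{j,\alpha}(\bm x)$ kept on the left then yields $\mathcal L_j = \sum_{l\ge 0}\mathcal L_j^{(l)}r^l$ with $\mathcal L_j^{(l)}$ as in \eqref{eq:mathcal_Lj}. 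Two consequences follow at once: since $q_1+\cdots+q_p\le \alpha_1+\cdots+\alpha_p\le|\alpha|\le m$, the only terms that can be nonzero are those with $l\in\{0,\dots,m\}$, so the $\mathcal L$-expansion is finite; and for $l=0$ the only admissible choice is $q_1=\cdots=q_p=0$, giving $C_{\alpha_k 0}=\partial_{x_k}^{\alpha_k}$, hence $\mathcal D_x^{(\alpha,0)}=\Diff_{\bm x}^\alpha$ and $\mathcal L^{(0)}=\Lop$, which depends on $\bm x$ only.

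Finally, for formula \eqref{eq:Lj_psij}, I would substitute the series \eqref{eq:psi_series} for $\psi_j$ into $\mathcal L_j\psi_j = \big(\sum_{q\ge 0}\mathcal L_j^{(q)}r^q\big)\big(\sum_{k\ge 0}\psi_j^{(k)}r^k\big)$. Since each $\mathcal L_j^{(q)}$ involves only spatial derivatives, it again commutes with the scalar $r=r(\bar t)$, so this is a formal Cauchy product whose $r^s$-coefficient is $\sum_{q=0}^{s}\mathcal L_j^{(q)}\psi_j^{(s-q)}$, as claimed. I do not expect a genuine obstacle: everything reduces to elementary binomial and multinomial bookkeeping, and since all the expansions are treated formally no convergence question arises. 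The one point that truly deserves care — and the place where the structure of the multiple-scales set-up enters — is the repeated appeal to commutativity: between $\partial_{x_k}$ and $\partial_{\bar x_k}$, which is licensed precisely because the fast and slow variables are formally independent, and between $r$ and the spatial operators, which holds because $r$ is a function of $\bar t$ alone.
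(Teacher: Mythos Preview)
Your proposal is correct and follows essentially the same approach as the paper: apply the replacement \eqref{eq:partial_derivatives} to each $\Diff_{\bm x}^\alpha$, expand each factor $(\partial_{x_k}+r\partial_{\bar x_k})^{\alpha_k}$ via the binomial theorem, multiply out and group by total power of $r$ using the Cauchy product, then sum over $|\alpha|\le m$ and apply a second Cauchy product against \eqref{eq:psi_series} to obtain \eqref{eq:Lj_psij}. If anything, your version is slightly more explicit than the paper's about why the relevant commutativity holds (independence of $\bm x$ and $\bar{\bm x}$, and $r=r(\bar t)$ commuting with spatial operators), which is the only point requiring any care.
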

		
		\begin{proof}
			This can be shown directly using the definition of $\Lop$ in \eqref{eq:L} and the expressions in \eqref{eq:partial_derivatives}. See Appendix \ref{app:proof_of_lemma_L} for details.
		\end{proof}

		
		Substituting the power series expressions for $\bm \psi$, $\Lop$, $\N$ into \eqref{eq:psi_eqn}, replacing partial derivatives as in \eqref{eq:partial_derivatives}, and matching terms with equal powers of $r$ leads to the following recursive set of equations:
		\begin{equation}
			\label{eq:matching_eqn}
			\left( \partial_t - \M - \mathcal L^{(0)} \right) \bm \psi^{(\nu)} = \bm B^{(\nu)} , \qquad 
			\nu = 0, 1, \ldots ,
		\end{equation}
		where the first $\beta$ orders are given by
		\begin{equation}
			\begin{split}
				\bm B^{(0)} &= 0, \\
				\bm B^{(\nu)} &= \sum_{q = 1}^\nu \mathcal L^{(q)} \bm \psi^{(\nu - q)} + \mathcal N^{(\nu)} , \qquad \nu = 1, \ldots, \beta - 1, \\
				\bm B^{(\beta)} &= - \left( \partial_{\bar t} + r^{-1} \partial_{\bar t} r \right) \bm \psi^{(0)} + \sum_{q = 1}^\beta \mathcal L^{(q)} \bm \psi^{(\beta - q)} + \mathcal N^{(\beta)} .
			\end{split}
		\end{equation}
		Equation \eqref{eq:matching_eqn} can be solved recursively for the $\bm \psi^{(\nu)}$, since the $\nu = 0$ equation is homogeneous and the $\bm B^{(\nu)}$ with $\nu \geq 1$ only depend on lower order functions $\bm \psi^{(k)}$ with $k < \nu$. The aim is to obtain a closed form equation for the leading order approximation $\bm \psi^{(0)}$. The general form of the solution follows after solving the homogeneous equation for $\nu = 0$, but a higher order solvability condition at $\nu = \beta$ needs to be imposed in order to pin down the dependence on the small space and time scales $\bar{\bm x}$ and $\bar t$, which do not appear in $\partial_t - \M - \mathcal L^{(0)}$. Thus, in order to obtain a modulation equation, one must
		\begin{enumerate}
			\item[(i)] Solve equations \eqref{eq:matching_eqn} recursively for $\bm \psi^{(\nu)}$ with $\nu = 0, \ldots, \beta - 1$, and
			\item[(ii)] Impose a solvability condition at $\nu = \beta$.
		\end{enumerate}
		If (i) can be achieved, then the formal limit as $r \to 0$ is well-defined in equation \eqref{eq:psi_eqn}, and therefore also in \eqref{eq:K1_eqns}, \eqref{eq:K2_eqns} and \eqref{eq:K3_eqns}. More explicitly, if solutions $\bm \psi^{(\nu)}$ to \eqref{eq:matching_eqn} exist for all $\nu = 0,1,\dots,\beta-1$, then equation \eqref{eq:psi_eqn} can be written as
		\[
		\begin{split}
			(\partial_t - \M - \mathcal L^{(0)}) \sum_{k=\beta}^\infty \bm \psi^{(k)} r^{k-\beta} &=
			- r^{-1} \left( \sum_{k=0}^\infty \bm \psi^{(k)} r^k \right) \partial_{\bar t} r \\
			&+ \sum_{s \geq \beta} \sum_{q=0}^s \mathcal L^{(q)} \bm \psi^{(s - q)} r^{s-\beta} + \sum_{k=\beta}^\infty \mathcal N^{(k)} r^{k-\beta} ,
		\end{split}
		\]
		which has the following well-defined and non-trivial limit as $r \to 0$:
		\begin{equation}
			\label{eq:psi_r0}
			\partial_{\bar t} \bm \psi^{(0)} = - r^{-1}  \bm \psi^{(0)} \partial_{\bar t} r - \left( \partial_t - \M - \mathcal L^{(0)} \right) \bm \psi^{(\beta)} + \sum_{q = 1}^\beta \mathcal L^{(q)} \bm \psi^{(\beta - q)} + \mathcal N^{(\beta)} .
		\end{equation}
		Equation \eqref{eq:psi_r0} describes the `desingularized' dynamics on the blow-up manifold, i.e.~after restriction to the invariant subspace $\{r = 0\}$.
		
		\begin{remark}
			Due to the recursive structure of the equations in \eqref{eq:matching_eqn}, it is typical for solutions to $\bm \psi^{(\nu)}$ with $\nu \geq 1$ to exist as long as solutions $\bm \psi^{(0)}$ to the homogeneous $\nu = 0$ problem exist. Note however that the recursive structure leads to a trade-off between formal accuracy and the degree of spatial regularity. Specifically, the degree of spatial regularity of $\bm \psi^{(0)}$ required in order to make the formal approximation $\bm \psi \approx \sum_{k=0}^l \bm \psi^{(k)} r^k$ will in general increase with $l$.
		\end{remark} 
		
		Notice that the right-hand side in equation \eqref{eq:psi_r0} (as well as higher order variants of this equation) may, in general, still depend on $\bm x$, $t$, $\bar{\bm x}$ and $\bar t$. However, large and small scales are naturally separated via the form of solutions to the base homogeneous equation for $\nu = 0$, which depend explicitly on $\bm x$, $t$, but only implicitly on $\bar{\bm x}$, $\bar t$ via unknown functions $A(\bar{\bm x}, \bar t) \in \mathbb C$ introduced by integration. The aim in Step (ii) is to obtain closed form (modulation) equations for these functions via the application of a suitable solvability condition. The particular form of the solvability condition depends on the problem. However, it can often be identified using established methods and techniques, e.g.~via an application of the Fredholm theorem. In the next section we formulate and impose solvability conditions in order to obtain modulation equations for the model problems (M1)-(M4).

		\section{Modulation equations for the model problems}
		\label{sec:model_problems}
		
		We now apply the method developed in Section \ref{sec:geometric_blow-up_and_the_modulation_equations} in order to derive modulation equations for the model problems (M1)-(M4) introduced in Section \ref{sec:dynamic_Turing_instability}.

		\subsection{(M1) Modulation equations}
		\label{sec:sh}
		
		The relevant blow-up transformation in this case is presented and described in detail in \cite[Sec.~3]{Jelbart2022}. Here we restate the leading order ansatz only, which is given by the blow-up transformation \eqref{eq:general_blow-up}-\eqref{eq:general_desingularization} with
		\begin{equation}
			\label{eq:blow-up_i}
			u(x,t) = r(\bar t) \psi(x, \bar x, \bar t)  
		\end{equation}
		and $\beta = 2$. Substituting \eqref{eq:blow-up_i} into the homogeneous equation for $\nu = 0$, which in this case is given by
		\begin{equation}
			\label{eq:nu0_eqn_sh}
			\mathcal L^{(0)} \psi^{(0)}(x, \bar x, \bar t) = 0 ,
		\end{equation}
		yields the neutral solution
		\[
		\psi^{(0)}(x,\bar x, \bar t) = A(\bar x, \bar t) \me^{ix} + c.c.,
		\]
		for a (presently unknown) modulation function $A(\bar x, \bar t) \in \mathbb C$. In order to derive a modulation equation, we need a suitable solvability condition for the $\nu = \beta = 2$ equation in \eqref{eq:matching_eqn}. This can be obtained by applying the \textit{Fredholm alternative}, which implies that the equation has a solution if and only if 
		\begin{equation}
			\label{eq:solvability_sh_1}
			\int_0^{2 \pi} B^{(2)}(x,\bar x, \bar t) \me^{-ix} dx = 0.
		\end{equation}
		This condition can be simplified using the recursive structure of the equations \eqref{eq:matching_eqn}, which implies that $B^{(2)}$ can be written as
		\[
		B^{(2)}(x, \bar x, \bar t) = \sum_{k \in \mathbb Z} B^{(2,k)} (\bar x, \bar t) \me^{ikx} .
		\]
		Thus, the solvability condition \eqref{eq:solvability_sh_1} amounts to the requirement that the $\me^{ix}$ coefficient vanishes, i.e.~
		\begin{equation}
			\label{eq:solvability_sh}
			B^{(2,1)}(\bar x, \bar t) \equiv 0.
		\end{equation}
		%
		%
		%
		Imposing \eqref{eq:solvability_sh} leads to the following result.
		
		\begin{thm}
			\label{thm:modulation_equations_sh}
			Consider the system obtained by applying the blow-up transformation defined by \eqref{eq:general_blow-up}-\eqref{eq:general_desingularization} and \eqref{eq:blow-up_i} with $\beta = 2$ to the Swift-Hohenberg system \eqref{eq:dynamic_sh}. The solvability condition \eqref{eq:solvability_sh} is satisfied if and only if $A(\bar x,\bar t) \in \mathbb C$ satisfies the following modulation equation of real GL type:
			\begin{equation}
				\label{eq:modulation_eqn_global_i}
				\partial_{\bar t} A = 4 \partial_{\bar x}^2 A + \left( \bar \mu(\bar t) - r(\bar t)^{-1} \partial_{\bar t} r(\bar t) \right) A - 3 A |A|^2 .
			\end{equation}
			%
			This leads to the following modulation equations in charts $\mathcal K_l$:
			\begin{equation}
				\label{eq:modulation_eqn_i_charts}
				\begin{aligned}
					\mathcal K_1 : \ \ & \partial_{t_1} A_1 = 4 \partial_{x_1}^2 A_1 + \left(-1 + \frac{\eps_1(t_1)}{2} \right) A_1 - 3 A_1 |A_1|^2 , \\
					\mathcal K_2 : \ \ & \partial_{t_2} A_2 = 4 \partial_{x_2}^2 A_2 + \mu_2(t_2) A_2 - 3 A_2 |A_2|^2 , \\
					\mathcal K_3 : \ \ & \partial_{t_3} A_3 = 4 \partial_{x_3}^2 A_3 + \left(1 - \frac{\eps_3(t_3)}{2} \right) A_3 - 3 A_3 |A_3|^2 ,
				\end{aligned}
			\end{equation}
			where $\eps_1(t_1)$, $\mu_2(t_2)$ and $\eps_3(t_3)$ are given by \eqref{eq:ode_solutions}.
		\end{thm}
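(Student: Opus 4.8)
The plan is to run the recursive scheme of Section~\ref{sub:method_of_multiple_scales} by hand for the Swift--Hohenberg system \eqref{eq:dynamic_sh}, where $\M = 0$, $\Lop = -(1+\partial_x^2)^2$ and $\N(u,\mu,\eps) = \mu u - u^3$. First I would specialise equation \eqref{eq:psi_eqn}: since $\N(r\psi, r^2\bar\mu, r^{4}\bar\eps) = r^3(\bar\mu\psi - \psi^3)$, the nonlinear contribution $r^{-1-\beta}\N$ with $\beta = 2$ is simply $\bar\mu\psi - \psi^3$, so that \eqref{eq:psi_eqn} reads $\partial_{\bar t}\psi = r^{-2}\mathcal L\psi - r^{-1}\psi\,\partial_{\bar t}r + \bar\mu\psi - \psi^3$, with $\mathcal L$ the desingularised operator obtained from $\Lop$ via $\partial_x \mapsto \partial_x + r\partial_{\bar x}$, cf.~\eqref{eq:partial_derivatives}. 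Expanding $\mathcal L = -\bigl(1 + (\partial_x + r\partial_{\bar x})^2\bigr)^2$ in powers of $r$ --- the $N=1$, $m=4$ instance of Lemma~\ref{lem:L} --- yields, up to the order $\beta = 2$ that is needed, the three operators $\mathcal L^{(0)} = -(1+\partial_x^2)^2$, $\mathcal L^{(1)} = -4(1+\partial_x^2)\partial_x\partial_{\bar x}$ and $\mathcal L^{(2)} = -2(1+\partial_x^2)\partial_{\bar x}^2 - 4\partial_x^2\partial_{\bar x}^2$. Because the ansatz \eqref{eq:blow-up_i} carries no explicit $t$-dependence, the operator on the left of the hierarchy \eqref{eq:matching_eqn} collapses to $-\mathcal L^{(0)} = (1+\partial_x^2)^2$.

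Second, I would solve \eqref{eq:matching_eqn} up to order $\nu = \beta - 1 = 1$. The $\nu=0$ equation is \eqref{eq:nu0_eqn_sh}, whose neutral $2\pi$-periodic (in $x$) solution is $\psi^{(0)}(x,\bar x,\bar t) = A(\bar x,\bar t)\me^{ix} + c.c.$ For $\nu=1$ one has $\bm B^{(1)} = \mathcal L^{(1)}\psi^{(0)} + \mathcal N^{(1)}$; the cubic term enters $\N$ only at order $r^3$, so $\mathcal N^{(0)} = \mathcal N^{(1)} = 0$ and $\mathcal N^{(2)} = \bar\mu\psi^{(0)} - (\psi^{(0)})^3$, while $\mathcal L^{(1)}\psi^{(0)} = 0$ because $(1+\partial_x^2)\me^{\pm ix} = 0$ kills $(1+\partial_x^2)\partial_x\me^{\pm ix}$. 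Hence $\bm B^{(1)} = 0$ and one may take $\psi^{(1)} = 0$, using the standard gauge that the $\me^{\pm ix}$-components are carried entirely by $A$; this is legitimate precisely because the kernel of $(1+\partial_x^2)^2$ acting on $2\pi$-periodic functions is $\mathrm{span}\{\me^{ix},\me^{-ix}\}$.

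Third --- the core step --- I would compute $\bm B^{(2)} = -\bigl(\partial_{\bar t} + r^{-1}\partial_{\bar t}r\bigr)\psi^{(0)} + \mathcal L^{(1)}\psi^{(1)} + \mathcal L^{(2)}\psi^{(0)} + \bar\mu\psi^{(0)} - (\psi^{(0)})^3$ and extract its $\me^{ix}$-Fourier coefficient. With $\psi^{(1)}=0$ the term $\mathcal L^{(1)}\psi^{(1)}$ vanishes; using $(1+\partial_x^2)\me^{ix}=0$ and $\partial_x^2\me^{ix} = -\me^{ix}$ one finds $\mathcal L^{(2)}\psi^{(0)} = 4\partial_{\bar x}^2 A\,\me^{ix} + c.c.$, and $(\psi^{(0)})^3 = A^3\me^{3ix} + 3|A|^2A\,\me^{ix} + c.c.$ (the $\me^{\pm 3ix}$ harmonics being irrelevant to solvability). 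Since $(1+\partial_x^2)^2$ is self-adjoint on $2\pi$-periodic functions with the kernel above, the Fredholm alternative gives exactly \eqref{eq:solvability_sh_1}, which --- $\bm B^{(2)}$ being real-valued --- is equivalent to \eqref{eq:solvability_sh}, i.e.\ to $B^{(2,1)} \equiv 0$. Collecting $\me^{ix}$-coefficients yields $B^{(2,1)} = -\partial_{\bar t}A - r^{-1}(\partial_{\bar t}r)A + 4\partial_{\bar x}^2A + \bar\mu A - 3|A|^2A$, and the identity $B^{(2,1)}\equiv 0 \iff \partial_{\bar t}A = 4\partial_{\bar x}^2 A + (\bar\mu - r^{-1}\partial_{\bar t}r)A - 3A|A|^2$ is precisely the asserted equivalence with \eqref{eq:modulation_eqn_global_i}.

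Finally, the chart forms \eqref{eq:modulation_eqn_i_charts} are read off from \eqref{eq:modulation_eqn_global_i} by rewriting in the coordinates \eqref{eq:coordinates}: in $\mathcal K_l$ one has $\bar x \mapsto x_l$, $\bar t \mapsto t_l$, $A \mapsto A_l$, with $\bar\mu$ equal to $-1$, $\mu_2(t_2)$ and $1$ respectively, while $r^{-1}\partial_{\bar t}r$ is evaluated from the $\dot r_l$ equations in \eqref{eq:K1_eqns}, \eqref{eq:K2_eqns}, \eqref{eq:K3_eqns} as $-\tfrac{1}{2}\eps_1$, $0$ and $\tfrac{1}{2}\eps_3$; this produces the coefficients $-1 + \eps_1/2$, $\mu_2$ and $1 - \eps_3/2$, with $\eps_1(t_1)$, $\mu_2(t_2)$, $\eps_3(t_3)$ given by \eqref{eq:ode_solutions}. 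I expect the main obstacle to be not conceptual but the careful expansion of the biharmonic operator and the verification that every contribution to $\mathcal L^{(1)}\psi^{(0)}$ and the unwanted part of $\mathcal L^{(2)}\psi^{(0)}$ is annihilated by $(1+\partial_x^2)\me^{\pm ix} = 0$, leaving exactly the diffusion term $4\partial_{\bar x}^2 A$; a secondary point is making the Fredholm set-up (kernel/cokernel of $(1+\partial_x^2)^2$, reality of $\bm B^{(2)}$) and the gauge choice $\psi^{(1)} = 0$ fully precise.
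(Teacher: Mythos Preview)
Your proposal is correct and follows essentially the same approach as the paper: expand $\mathcal L$ to order $r^2$, solve the $\nu=0,1$ levels, and impose the Fredholm solvability condition at $\nu=2$ to obtain \eqref{eq:modulation_eqn_global_i}, then specialise to charts via \eqref{eq:coordinates} and the $\dot r_l$ equations. The only cosmetic difference is that the paper keeps the general homogeneous solution $\psi^{(1)} = A^{(1)}\me^{ix} + c.c.$ at $\nu=1$ rather than your gauge choice $\psi^{(1)}=0$; since $\mathcal L^{(1)}$ contains the factor $(1+\partial_x^2)$ and hence annihilates $\me^{\pm ix}$, the term $\mathcal L^{(1)}\psi^{(1)}$ vanishes either way and the solvability condition is unaffected.
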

		
		\begin{proof}
			We need to (i) solve equations \eqref{eq:matching_eqn} recursively for $\nu = 0, 1$, and (ii) apply the solvability condition \eqref{eq:solvability_sh} at $\nu = \beta = 2$. 
			
			\
			
			The solution at $\nu = 0$ is already described above.
			At $\nu = 1$ we have
			\[
			B^{(1)} = \mathcal L^{(1)} \psi^{(0)} 
			= - 4 \partial_{\bar x} \partial_x (1 + \partial_x^2) \psi^{(0)} = 0 ,
			\]
			where we used the fact that
			\[
			\N(r \psi, r^2 \bar \mu, r^4 \bar \eps) = r^3 (\bar \mu \psi - \psi^3) = O(r^3) \ \implies \ \mathcal N^{(1)} = 0 ,
			\]
			together with the expression for $\mathcal L^{(1)}$ obtained using Lemma \ref{lem:L}, and the fact that $(1 + \partial_x^2) \psi^{(0)} = 0$ due to the $\nu = 0$ equation \eqref{eq:nu0_eqn_sh}. It follows that
			\[
			\psi^{(1)}(x, \bar x, \bar t) = A^{(1)}(\bar x, \bar t) \me^{ix} + c.c..
			\]
			
			\
			
			It remains to apply the solvability condition \eqref{eq:solvability_sh} at $\nu = 2$. We have
			\begin{equation}
				\label{eq:B2_sh}
				B^{(2)} = -(\partial_{\bar t} + r^{-1} \partial_{\bar t} r) \psi^{(0)} + \mathcal L^{(1)} \psi^{(1)} + \mathcal L^{(2)} \psi^{(0)} + \mathcal N^{(2)} ,
			\end{equation}
			where $\mathcal L^{(1)}, \psi^{(0)}, \psi^{(1)}$ are as above and
			\[
			\mathcal L^{(2)} = - 2 \partial_{\bar x}^2 (1 + 3 \partial_x^2) , \qquad 
			\mathcal N^{(2)} = \bar \mu \psi^{(0)} - {\psi^{(0)}}^3 .
			\]
			Substituting these expressions into \eqref{eq:B2_sh} and imposing the solvability condition \eqref{eq:solvability_sh} yields the dynamic GL equation \eqref{eq:modulation_eqn_global_i}. The modulation equations in charts $\mathcal K_l$ are obtained directly from \eqref{eq:modulation_eqn_global_i} using the local coordinate formulae \eqref{eq:coordinates}.
		\end{proof}
		
		Theorem \ref{thm:modulation_equations_sh} implies that solutions $u(x, t)$ to the Swift-Hohenberg problem \eqref{eq:dynamic_sh} are formally approximated by
		\[
		u(x,t) = r \left( A(\bar x,\bar t) \me^{i x} + c.c. \right) + O(r^2) ,
		\]
		where $A(\bar x, \bar t) \in \mathbb C$ satisfies the real GL equation \eqref{eq:modulation_eqn_global_i}. It is interesting to compare the modulation equation obtained in Theorem \ref{thm:modulation_equations_sh} with the well-known modulation equation \eqref{eq:modulation_eqn_static_sh} obtained in static modulation theory. Like \eqref{eq:modulation_eqn_static_sh}, equation \eqref{eq:modulation_eqn_global_i} is a real GL equation. However, equation \eqref{eq:modulation_eqn_global_i} has a time-dependent linear coefficient $\mu(\bar t) - r(\bar t)^{-1} \partial_{\bar t} r(\bar t)$, which is a consequence of the slow parameter drift induced by $\dot \mu = \eps$. Moreover, equation \eqref{eq:modulation_eqn_global_i} is posed in a non-trivial geometry (the blown-up space), and it depends on desingularized (as opposed to rescaled) space and time variables $\bar x$ and $\bar t$.
		
		The role of the time-dependent linear coefficient becomes clearer upon inspection of the equations in charts $\mathcal K_l$, see \eqref{eq:modulation_eqn_i_charts}. One can show that the functions $\eps_1(t_1)$ and $\eps_3(t_3)$ remain small and bounded for long enough time-scales to ensure that the linear coefficients in bounded subsets of $\mathcal K_1$ and $\mathcal K_3$ are negative and positive, respectively, whereas $\mu_2(t_2)$ changes sign in chart $\mathcal K_2$. Thus on the linear level, solutions which start with initial conditions in $\mu < 0$ are exponentially contracted towards $A_1 = 0$ in chart $\mathcal K_1$. Exponential contraction is lost in chart $\mathcal K_2$ as $\mu(t_2)$ becomes positive, and solutions in chart $\mathcal K_3$ are exponentially repelled from $A_3 = 0$. We refer again to \cite{Jelbart2022} for a more detailed treatment of the dynamics. 
		
		\begin{remark}
			Equations \eqref{eq:modulation_eqn_i_charts} can also be formulated as autonomous systems of equations by using the relevant expressions in \eqref{eq:K1_eqns}, \eqref{eq:K2_eqns} and \eqref{eq:K3_eqns}. For example, the equations in $\mathcal K_1$ are
			\begin{equation}
				\label{eq:K1_sh}
				\begin{split}
					\partial_{t_1} A_1 &= 4 \partial_{x_1}^2 A_1 + \left(-1 + \frac{\eps_1}{2} \right) A_1 - 3 A_1 |A_1|^2 , \\
					\dot r_1 &= - \frac{1}{2} r_1 \eps_1 , \\
					\dot \eps_1 &= \frac{2 + \beta}{2} \eps_1^2 .
			\end{split}
			\end{equation}
			This formulation is more amenable to dynamical systems approaches. Similar observations apply to all of the modulation equations derived in this work.
		\end{remark}
		
		\begin{remark}
			Higher order corrections to $\psi$ can be obtained recursively via the equations \eqref{eq:matching_eqn} and the successive application of solvability conditions. In the Turing instability case, the relevant solvability conditions at higher orders amount to the requirement that $\me^{i \xi x}$ coefficients vanish at the non-critical frequencies $\xi \in \mathbb Z \setminus \{ \pm 1\}$. Higher order corrections can also be implemented directly in the blow-up transformation as in \cite{Jelbart2022}.
		\end{remark}
		
		\begin{remark}
			\label{rem:hyperbolicity}
			From a dynamical point of view, it is significant to note that the the blown-up equations \eqref{eq:modulation_eqn_i_charts} have improved hyperbolicity properties in comparison to the original Swift-Hohenberg problem \eqref{eq:dynamic_sh}. Consider $\mathcal K_1$ equations \eqref{eq:K1_sh}. Linearising about the set of trivial steady-states $\mathcal C_1 = \{A_1 = 0, \eps_1 = 0, r_1 \geq 0 \}$ shows that the spectrum is comprised of two identically zero eigenvalues $\lambda_{r_1} = \lambda_{\eps_1} = 0$ and a continuous curve/line $\lambda_{A_1} = -1 - 4 \xi^2$, $\xi \in \R$, which is bounded in the left-half plane. Thus, there is a spectral gap, which already shows that the dynamical stability of the steady states is governed by the leading point spectrum. 
		\end{remark}
		
		\subsection{(M2) Modulation equations}
		
		
		The relevant blow-up transformation in this case is given by \eqref{eq:general_blow-up}-\eqref{eq:general_desingularization} with
		\begin{equation}
			\label{eq:blow-up_iv}
			\bm u(\bm x, t) = r(\bar t) \bm \psi(t, \bar {\bm x}, \bar t) ,
		\end{equation}
		and $\beta = 2$. Using the expressions in \eqref{eq:LM_b}, it follows that the $\nu = 0$ equation is
		\[
		\left( \partial_t - \M \right) \bm \psi^{(0)}(t, \bar {\bm x}, \bar t) = \bm 0,
		\]
		which has neutral solutions
		\[
		\bm \psi^{(0)}(t, \bar{\bm x}, \bar t) = A(\bar{\bm x},\bar t) \bm \varphi \me^{i a t} + c.c.,
		\]
		where $\bm \varphi = (1, -1 + i a^{-1})^\transpose$ is the eigenvector corresponding to the eigenvalue $\lambda_1(0,0) = i a$ at criticality. In this case, the solvability condition at $\nu = 2$ is
		\begin{equation}
			\label{eq:solvability_b_1}
			\int_0^{2 \pi / a} \bm \varphi^\ast \cdot \bm B^{(2)} (t, \bar{\bm x}, \bar t) \me^{-iat} dt = 0 ,
		\end{equation}
		where $\bm \varphi^\ast = \tfrac{1}{2} (1 - i a, -i a)^\transpose$. This can be simplified using the recursive structure of the equations \eqref{eq:matching_eqn}. Decomposing $\bm B^{(2)}$ into harmonics such that
		\[
		\bm B^{(2)}(t, \bar{\bm x}, \bar t) = \sum_{k \in \mathbb Z} \bm B^{(2,k)} (\bar{\bm x}, \bar t) \me^{i k a t} ,
		\]
		leads to the simplified solvability condition
		\begin{equation}
			\label{eq:solvability_b}
			\bm \varphi^\ast \cdot \bm B^{(2,1)}(\bar{\bm x}, \bar t) \equiv 0 .
		\end{equation}
		Imposing \eqref{eq:solvability_b} leads to the following result.
		
		\begin{thm}
			\label{thm:modulation_equations_b}
			Consider the system obtained by applying the blow-up transformation defined by \eqref{eq:general_blow-up}-\eqref{eq:general_desingularization} and \eqref{eq:blow-up_iv} with $\beta = 2$ to the Brusselator system \eqref{eq:dynamic_b_2}. The solvability condition \eqref{eq:solvability_b} is satisfied if and only if $A(\bar{\bm x}, \bar t) \in \mathbb C$ satisfies the following modulation equation of complex GL type:
			\begin{equation}
				\label{eq:modulation_eqn_global_iv}
				\partial_{\bar t} A = c_1 \Delta_{\bar {\bm x}} A + \left( c_2 \bar \mu(\bar t) - r(\bar t)^{-1} \partial_{\bar t} r(\bar t) \right) A - c_3 A |A|^2 ,
			\end{equation}
			where $\Delta_{\bar{\bm x}} := \sum_{j=1}^n \partial_{\bar x_j}^2$ and $c_1, c_2, c_3 \in \mathbb C$ are given by \eqref{eq:nu_i}. This leads to the following modulation equations in charts $\mathcal K_l$:
			\begin{equation}
				\label{eq:modulation_eqn_i_charts_iv}
				\begin{aligned}
					\mathcal K_1 : \ \ & \partial_{t_1} A_1 = c_1 \Delta_{\bm x_1} A_1 + \left(-c_2 + \frac{\eps_1(t_1)}{2} \right) A_1 - c_3 A_1 |A_1|^2 , \\
					\mathcal K_2 : \ \ & \partial_{t_2} A_2 = c_1 \Delta_{\bm x_2} A_2 + c_2 \mu_2(t_2) A_2 - c_3 A_2 |A_2|^2 , \\
					\mathcal K_3 : \ \ & \partial_{t_3} A_3 = c_1 \Delta_{\bm x_3} A_3 + \left(c_2 - \frac{\eps_3(t_3)}{2} \right) A_3 - c_3 A_3 |A_3|^2 ,
				\end{aligned}
			\end{equation}
			where $\Delta_{\bar{\bm x}_l} := \sum_{j=1}^n \partial_{\bar x_{j,l}}^2$, and $\eps_1(t_1)$, $\mu_2(t_2)$ and $\eps_3(t_3)$ are given by \eqref{eq:ode_solutions}.
		\end{thm}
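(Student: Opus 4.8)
The proof follows the template of Theorem~\ref{thm:modulation_equations_sh}: solve the recursion \eqref{eq:matching_eqn} for $\nu = 0$ and $\nu = 1$, then impose the solvability condition \eqref{eq:solvability_b} at $\nu = \beta = 2$. The crucial structural simplification is that the ansatz \eqref{eq:blow-up_iv} makes $\bm\psi$ independent of the fast space variable $\bm x$, so every term in Lemma~\ref{lem:L} containing a factor $\partial_{x_j}$ vanishes; using \eqref{eq:LM_b} this gives $\mathcal L^{(0)}\bm\psi = \mathcal L^{(1)}\bm\psi = \bm 0$ and $\mathcal L^{(2)}\bm\psi = \textup{diag}(d_1,d_2)\Delta_{\bar{\bm x}}\bm\psi$. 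Consequently the $\nu = 0$ equation reads $(\partial_t - \M)\bm\psi^{(0)} = \bm 0$, whose bounded-in-$t$ solutions are $\bm\psi^{(0)} = A(\bar{\bm x},\bar t)\bm\varphi\me^{iat} + \mathrm{c.c.}$ with $\bm\varphi = (1,-1+ia^{-1})^\transpose$, as already recorded.

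At $\nu = 1$ one has $\bm B^{(1)} = \mathcal L^{(1)}\bm\psi^{(0)} + \mathcal N^{(1)} = \mathcal N^{(1)}$, where $\mathcal N^{(1)} = \big(a^{-1}(1+a^2)(\psi^{(0)}_1)^2 + 2a\psi^{(0)}_1\psi^{(0)}_2\big)(1,-1)^\transpose$ is the $O(r^2)$ part of $\N(r\bm\psi,r^2\bar\mu,r^4\bar\eps)$, i.e.~the quadratic part of $f$ evaluated at leading order. Since this quantity contains only the temporal harmonics $\me^{0}$ and $\me^{\pm 2iat}$, the $\nu = 1$ equation carries no solvability obstruction and is solved harmonic by harmonic: the $\me^{\pm 2iat}$ components require the invertibility of $\pm 2ia\,\mathbb I_2 - \M$, which holds since $\mathrm{spec}(\M) = \{\pm ia\}$, and the mean component requires $\M^{-1}$, which exists since $\det\M = a^2 \neq 0$. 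This determines $\bm\psi^{(1)}$ up to a kernel term $A^{(1)}\bm\varphi\me^{iat} + \mathrm{c.c.}$, which only generates $\me^{iat}$-free products at the next order and hence does not enter the modulation equation.

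At $\nu = \beta = 2$, using $\mathcal L^{(1)}\bm\psi^{(1)} = \bm 0$ one obtains $\bm B^{(2)} = -(\partial_{\bar t} + r^{-1}\partial_{\bar t}r)\bm\psi^{(0)} + \textup{diag}(d_1,d_2)\Delta_{\bar{\bm x}}\bm\psi^{(0)} + \mathcal N^{(2)}$, where $\mathcal N^{(2)} = \mathcal N^{(2)}_1(1,-1)^\transpose$ is the $O(r^3)$ part of $\N$: it contains the drift term $(1+a^2)\bar\mu\psi^{(0)}_1$, the quadratic cross terms between $\bm\psi^{(0)}$ and $\bm\psi^{(1)}$, and the cubic $(\psi^{(0)}_1)^2\psi^{(0)}_2$ (the $\bar\eps$-contribution is $O(r^4)$ and does not appear). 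Decomposing $\bm B^{(2)}$ into harmonics $\me^{ikat}$ and imposing \eqref{eq:solvability_b}, i.e.~$\bm\varphi^\ast\cdot\bm B^{(2,1)} = 0$ with $\bm\varphi^\ast = \tfrac12(1-ia,-ia)^\transpose$ (normalised so that $\bm\varphi^\ast\cdot\bm\varphi = 1$), amounts to extracting the $\me^{iat}$-coefficient of $\bm B^{(2)}$ and pairing it with $\bm\varphi^\ast$. The first term contributes $-\partial_{\bar t}A - r^{-1}(\partial_{\bar t}r)A$; the diffusion term contributes $c_1\Delta_{\bar{\bm x}}A$, since $\bm\varphi^\ast\cdot\textup{diag}(d_1,d_2)\bm\varphi = \tfrac12\big(d_1 + d_2 - ia(d_1 - d_2)\big) = c_1$; and $\mathcal N^{(2)}$ contributes $c_2\bar\mu A - c_3|A|^2 A$ with $c_2 = \tfrac12(1+a^2)$ and $c_3$ as in \eqref{eq:nu_i}, after inserting the explicitly computed coefficients of $\bm\psi^{(1)}$. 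Setting the sum to zero yields \eqref{eq:modulation_eqn_global_iv}, and the chart equations \eqref{eq:modulation_eqn_i_charts_iv} then follow by substituting the local coordinate relations \eqref{eq:coordinates} ($\bar\mu = -1$ in $\mathcal K_1$, $\bar\mu = 1$ in $\mathcal K_3$, $\bar\mu = \mu_2$ in $\mathcal K_2$) together with $r_i^{-1}\partial_{t_i}r_i = \mp\tfrac12\eps_i$, read off from \eqref{eq:ode_solutions}.

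The bulk of the work, and the only real obstacle, is the final step: enumerating all quadratic-$\times$-linear products that contribute to the $\me^{iat}$ harmonic of $\mathcal N^{(2)}$ and carrying the resolvents $(\pm 2ia\,\mathbb I_2 - \M)^{-1}$ and $\M^{-1}$ through, so as to verify that the cubic coefficient collapses exactly to the $c_3$ of \eqref{eq:nu_i}. This computation is precisely the classical complex Ginzburg--Landau reduction for the Brusselator carried out in \cite[App.~B]{Kuramoto1984}, from which the values of $c_1, c_2, c_3$ may be imported directly; the genuinely new ingredients --- the $-r^{-1}\partial_{\bar t}r$ term and the appearance of the slow variable $\bar\mu(\bar t)$ in place of the frozen constant $\delta^2$ --- arise automatically from the blow-up structure, exactly as in the Swift--Hohenberg case treated in Theorem~\ref{thm:modulation_equations_sh}.
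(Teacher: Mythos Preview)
Your proposal is correct and follows essentially the same route as the paper's proof: both exploit the $\bm x$-independence of the ansatz to reduce $\mathcal L$ to a pure $\Delta_{\bar{\bm x}}$ contribution at order $r^2$, solve the $\nu=1$ equation via the harmonics $\me^{0},\me^{\pm 2iat}$ (the paper records the explicit solution vectors $\bm V$, $\bm V_0$ from \cite[App.~B]{Kuramoto1984}, which you defer to), and then impose \eqref{eq:solvability_b} at $\nu=2$ to read off the modulation equation. Your write-up is in fact slightly more explicit than the paper's at two points --- the direct verification $\bm\varphi^\ast\cdot\textup{diag}(d_1,d_2)\bm\varphi=c_1$ and the remark that the $\nu=1$ harmonics avoid the kernel of $\partial_t-\M$ --- but the overall argument is the same.
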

		
		\begin{proof}
			Similarly to the proof of Theorem \ref{thm:modulation_equations_sh}, we need to (i) solve equations \eqref{eq:matching_eqn} up to $\nu = 1$, and (ii) apply the solvability condition \eqref{eq:solvability_b} at $\nu = \beta = 2$.
			
			\
			
			Since we are interested in solutions of the form $\psi(t,\bar{\bm x},\bar t)$ and the equations \eqref{eq:dynamic_b_2} are translation invariant with respect to the original spacial variable $\bm x$, it suffices to make the substitution
			\[
			\partial_{x_j}^2 \mapsto r^2 \partial_{\bar x_j}^2 , \qquad 
			j = 1, \ldots, n,
			\]
			which simplifies the expressions for $\mathcal L^{(k)}$ in Lemma \ref{lem:L} significantly. In particular, $\mathcal L^{(1)} = \mathbb O_{2,2}$ so that the $\nu = 1$ equation is
			\begin{equation}
				\label{eq:nu1_b}
				\left( \partial_t - \M \right) \bm \psi^{(1)} = 
				\mathcal N^{(1)} ,
			\end{equation}
			where a direct calculation shows that
			\[
			\mathcal N^{(1)} = 
			\left(\frac{1 + a^2}{a^2} {\psi_1^{(0)}}^2 + 2 a \psi_1^{(0)} \psi_2^{(0)} \right)
			\begin{pmatrix}
				1 \\
				-1
			\end{pmatrix} .
			\]
			Solving \eqref{eq:nu1_b} leads to
			\[
			\bm \psi^{(1)} = A^2 \bm V \me^{2 i a t} + \overline{A}^2 \overline{\bm V} \me^{- 2 i a t} + |A|^2 \bm V_0 + C \bm \psi^{(0)} ,
			\]
			where
			\[
			\bm V = \frac{(1 + ia)^3}{3 a^3}
			\begin{pmatrix}
				- 2 i a \\
				1 + 2 i a
			\end{pmatrix} ,
			\qquad 
			\bm V_0 = \frac{2 (a^2 - 1)}{a^3} 
			\begin{pmatrix}
				0 \\
				1
			\end{pmatrix} ,
			\]
			and $C$ is an arbitrary constant (we will not need to know it explicitly); see \cite[App.~B]{Kuramoto1984} for details on the corresponding calculations in the static setting, which are analogous for $\nu = 0$ and $\nu = 1$.
			
			\
			
			It remains to apply the solvability condition \eqref{eq:solvability_b} with
			\[
			\bm B^{(2)} = - \left( \partial_{\bar t} + r^{-1} \partial_{\bar t} r \right) \bm \psi^{(0)} + \mathcal N^{(2)} ,
			\]
			where
			\[
			\begin{split}
				\mathcal N^{(2)} &=
				\begin{pmatrix}
					d_1 \Delta & 0 \\
					0 & d_2 \Delta 	
				\end{pmatrix} 
				\bm \psi^{(0)} \\ 
				&+ \left( (1 + a^2) \bar \mu \psi_1^{(0)} + 2 \frac{1 + a^2}{a} \psi_1^{(0)} \psi_1^{(1)} + 2 a (\psi_1^{(0)} \psi_2^{(1)} + \psi_1^{(1)} \psi_2^{(0)} ) \right)
				\begin{pmatrix}
					1 \\
					-1
				\end{pmatrix} .
			\end{split}
			\]
			Substituting all of the relevant expressions above into \eqref{eq:solvability_b} yields equation \eqref{eq:modulation_eqn_global_iv}. The modulation equations in charts $\mathcal K_l$ are obtained directly from the global modulation equation \eqref{eq:modulation_eqn_global_iv} using the coordinate formulae \eqref{eq:coordinates}.
		\end{proof}
		
		Theorem \ref{thm:modulation_equations_b} implies that solutions $\bm u(\bm x, t)$ to the Brusselator system \eqref{eq:dynamic_b_2} are formally approximated by
		\[
		\bm u(\bm x,t) = r \left( A(\bar{\bm x},\bar t) \bm \varphi \me^{i a t} + c.c. \right) + O(r^2) ,
		\]
		where $A(\bar{\bm x}, \bar t) \in \mathbb C$ satisfies the complex GL equation \eqref{eq:modulation_eqn_global_iv}. 
		%
		%
		%
		Comparing the static and dynamic complex GL equations \eqref{eq:modulation_eqn_static_b} and \eqref{eq:modulation_eqn_global_iv}, we find that the same observations which were made following the proof of Theorem \ref{thm:modulation_equations_sh} in Section \ref{sec:sh} apply here as well, except that the equations are of complex (as opposed to real) GL type.
		
		\begin{remark}
			Higher order corrections can be obtained recursively via the equations \eqref{eq:matching_eqn} and the successive application of solvability conditions which amount to the requirement that $\me^{i \xi a t}$ coefficients vanish at the non-critical harmonics $\xi \in \mathbb Z \setminus \{ \pm 1\}$.
		\end{remark}
		
		\begin{remark}
			\label{rem:hyperbolicity_2}
			An analogue of Remark \ref{rem:hyperbolicity} applies to the comparison between the hyperbolicity properties of the Brusselator system \eqref{eq:dynamic_b_2} and modulation equations in charts $\mathcal K_l$ defined in \eqref{eq:modulation_eqn_i_charts_iv}. 
		\end{remark}

		\subsection{(M3) Modulation equations}
		
		The relevant blow-up transformation in this case is given by \eqref{eq:general_blow-up}-\eqref{eq:general_desingularization} with
		\begin{equation}
			\label{eq:blow-up_ii}
			\bm u(x, t) = r(\bar t) \bm \psi(x, t, \bar x, \bar t) ,
		\end{equation}
		and $\beta = 2$. Since $\M = \mathbb O_{2,2}$ (recall \eqref{eq:LM_ks}), the $\nu =0$ equation is
		\[
		(\partial_t - \mathcal L^{(0)}) \bm \psi^{(0)} = \bm 0 ,
		\]
		which has neutral solutions of the form
		\[
		\bm \psi^{(0)}(x, t, \bar x, \bar t) = 
		\begin{pmatrix}
			A_1(\bar x, \bar t) \me^{i(x - t)} + c.c. \\
			A_2(\bar x, \bar t) \me^{i(x + t)} + c.c.
		\end{pmatrix} ,
		\]
		for modulation functions $A_1(\bar x, \bar t), A_2(\bar x, \bar t) \in \mathbb C$. In order to formulate solvability conditions for $A_1$ and $A_2$, we note that the component functions in $\bm B^{(2)} = (B_1^{(2)}, B_2^{(2)})^\transpose$ can be written in the form
		\[
		B_l^{(2)}(x,t,\bar x,\bar t) = \sum_{k_1, k_2 \in \mathbb Z} B_l^{(2,k_1,k_2)}(\bar x, \bar t) \me^{i (k_1 x + k_2 t)} , \qquad 
		l = 1, 2.
		\]
		We will impose the following solvability conditions for $A_1$ and $A_2$ respectively (see however Remark \ref{rem:averaging_2} below):
		\begin{equation}
			\label{eq:solvability_ks}
			B_1^{(2,1,-1)}(\bar x, \bar t) \equiv 0, \qquad 
			B_2^{(2,1,1)}(\bar x, \bar t) \equiv 0.
		\end{equation}
		%
		%
		This leads to the following result.
		
		\begin{thm}
			\label{thm:modulation_equations_ks}
			Consider the system obtained by applying the blow-up transformation defined by \eqref{eq:general_blow-up}-\eqref{eq:general_desingularization} and \eqref{eq:blow-up_ii} with $\beta = 2$ to the Kuramoto-Sivashinsky system \eqref{eq:dynamic_ks}. The solvability condition \eqref{eq:solvability_ks} is satisfied if and only if the functions $A_1(\bar x, \bar t), A_2(\bar x, \bar t) \in \mathbb C$ obey the following system of coupled generalised complex GL equations:
			\begin{equation}
				\label{eq:modulation_eqn_global_ii}
				\begin{split}
					\partial_{\bar t} A_1 &= - \partial_{\bar x} A_1 +
					4 \partial_{\bar x}^2 A_1 + (\bar \mu(\bar t) - r(\bar t)^{-1} \partial_{\bar t} r(\bar t)) A_1 - \gamma_1 |A_1|^2 A_1 - \gamma_2 A_1 |A_2|^2, \\
					\partial_{\bar t} A_2 &= \partial_{\bar x} A_2 +
					4 \partial_{\bar x}^2 A_2 + (\bar \mu(\bar t) - r(\bar t)^{-1} \partial_{\bar t} r(\bar t)) A_2 - \gamma_1 |A_2|^2 A_2 - \gamma_2 A_2 |A_1|^2, 
				\end{split}
			\end{equation}
			where the constants $\gamma_1, \gamma_2 \in \mathbb C$ can be computed using equations \eqref{eq:matching_eqn} with $\nu = 1$. System \eqref{eq:modulation_eqn_global_ii} can alternatively be written in terms of left and right travelling wave amplitudes $A_L(\bar x_L,\bar t) := A_1(\bar x, \bar t)$ and $A_R(\bar x_R,\bar t) := A_2(\bar x, \bar t)$, where $\bar x_L := \bar x - \bar t$ and $\bar x_R := \bar x + \bar t$ respectively, as follows:
			\begin{equation}
				\label{eq:modulation_eqn_ks_2}
				\begin{split}
					\partial_{\bar t} A_L &= 
					4 \partial_{\bar x_L}^2 A_L + (\bar \mu(\bar t) - r(\bar t)^{-1} \partial_{\bar t} r(\bar t)) A_L - \gamma_1 |A_L|^2 A_L - \gamma_2 A_L |A_R|^2, \\
					\partial_{\bar t} A_R &= 
					4 \partial_{\bar x_R}^2 A_R + (\bar \mu(\bar t) - r(\bar t)^{-1} \partial_{\bar t} r(\bar t)) A_R - \gamma_1 |A_R|^2 A_R - \gamma_2 A_R |A_L|^2 .
				\end{split}
			\end{equation}
			%
			This leads to the following systems in charts $\mathcal K_1, \mathcal K_2, \mathcal K_3$ respectively:
			\[
			\begin{split}
				&
				\begin{cases}
					\partial_{t_1} A_{L,1} = 
					4 \partial_{x_{L,1}}^2 A_{L,1} + \left( -1 + \frac{\eps_1(t_1)}{2} \right) A_{L,1} - \gamma_1 |A_{L,1}|^2 A_{L,1} - \gamma_2 A_{L,1} |A_{R,1}|^2, \\
					\partial_{t_1} A_{R,1} = 
					4 \partial_{x_{R,1}}^2 A_{R,1} + \left( -1 + \frac{\eps_1(t_1)}{2} \right) A_{R,1} - \gamma_1 |A_{R,1}|^2 A_{R,1} - \gamma_2 A_{R,1} |A_{L,1}|^2 ,
				\end{cases} \\
				&
				\begin{cases}
					\partial_{t_2} A_{L,2} = 
					4 \partial_{x_{L,2}}^2 A_{L,2} + \mu_2(t_2) A_{L,2} - \gamma_1 |A_{L,2}|^2 A_{L,2} - \gamma_2 A_{L,2} |A_{R,2}|^2, \\
					\partial_{t_2} A_{R,2} = 
					4 \partial_{x_{R,2}}^2 A_{R,2} + \mu_2(t_2) A_{R,2} - \gamma_1 |A_{R,2}|^2 A_{R,2} - \gamma_2 A_{R,2} |A_{L,2}|^2 ,
				\end{cases} \\
				&
				\begin{cases}
					\partial_{t_3} A_{L,3} = 
					4 \partial_{x_{L,3}}^2 A_{L,3} + \left( 1 - \frac{\eps_3(t_3)}{2} \right) A_{L,3} - \gamma_1 |A_{L,3}|^2 A_{L,3} - \gamma_2 A_{L,3} |A_{R,3}|^2, \\
					\partial_{t_3} A_{R,3} = 
					4 \partial_{x_{R,3}}^2 A_{R,3} + \left( 1 - \frac{\eps_3(t_3)}{2} \right) A_{R,3} - \gamma_1 |A_{R,3}|^2 A_{R,3} - \gamma_2 A_{R,3} |A_{L,3}|^2 ,
				\end{cases}
			\end{split}
			\]
			where $\eps_1(t_1)$, $\mu_2(t_2)$ and $\eps_3(t_3)$ are given by \eqref{eq:ode_solutions}.
		\end{thm}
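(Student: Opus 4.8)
The plan is to run the same recursive scheme as in the proofs of Theorems \ref{thm:modulation_equations_sh} and \ref{thm:modulation_equations_b}: substitute the blow-up ansatz \eqref{eq:blow-up_ii} with $\beta = 2$ into \eqref{eq:psi_eqn}, expand $\Lop = \mathcal L^{(0)} + r \mathcal L^{(1)} + r^2 \mathcal L^{(2)}$ via Lemma \ref{lem:L} (here $\mathcal L^{(0)} = \Lop$; the first-order part of $\Lop$ contributes a new $\mp \partial_{\bar x}$ piece to $\mathcal L^{(1)}$ on components $1$ and $2$ respectively, while $\mathcal L^{(2)}$ is the same $-2 \partial_{\bar x}^2 (1 + 3 \partial_x^2)$ on each component as in the Swift--Hohenberg case), and then solve \eqref{eq:matching_eqn} for $\nu = 0, 1$ before imposing the solvability conditions \eqref{eq:solvability_ks} at $\nu = \beta = 2$. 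At $\nu = 0$, using $\mathcal L^{(0)}_{1,2} = -(1+\partial_x^2)^2 \mp \partial_x$, one checks that $\me^{i(x-t)}$ and $\me^{i(x+t)}$ lie in $\ker (\partial_t - \mathcal L^{(0)}_1)$ and $\ker(\partial_t - \mathcal L^{(0)}_2)$ respectively, which recovers the stated neutral solution $\bm \psi^{(0)} = (A_1 \me^{i(x-t)} + c.c., \, A_2 \me^{i(x+t)} + c.c.)^\transpose$ with free modulation functions $A_1(\bar x, \bar t)$ and $A_2(\bar x, \bar t)$.

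At $\nu = 1$ one has $\bm B^{(1)} = \mathcal L^{(1)} \bm \psi^{(0)} + \mathcal N^{(1)}$. Because the nonlinearity is $\partial_x$ of the quadratic $u^2 + uv + v^2$, the term $\mathcal N^{(1)}$ is the $x$-derivative of $({\psi_1^{(0)}}^2 + \psi_1^{(0)} \psi_2^{(0)} + {\psi_2^{(0)}}^2)(1,1)^\transpose$, which lives at the harmonics $\me^{\pm 2 i (x-t)}$, $\me^{\pm 2 i (x+t)}$ and $\me^{\pm 2 i x}$ (the $\me^{\pm 2 i t}$ and constant parts being annihilated by $\partial_x$); solving $(\partial_t - \mathcal L^{(0)}) \bm \psi^{(1)} = \bm B^{(1)}$ at these non-resonant harmonics fixes the second-harmonic part of $\bm \psi^{(1)}$, and it is from the resonant products of $\bm \psi^{(0)}$ with this second-harmonic part --- in particular the $\me^{\pm 2 i x}$ piece generated by $\partial_x(\psi_1^{(0)} \psi_2^{(0)})$ --- that the cubic coefficients $\gamma_1$ (self-interaction) and $\gamma_2$ (cross-interaction) will be read off at the next order, exactly as $\bm V$, $\bm V_0$ feed into $c_3$ in the Brusselator computation.

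The main obstacle lies in the resonant part of $\mathcal L^{(1)} \bm \psi^{(0)}$: the $\mp \partial_x$ transport terms in $\Lop$ produce a contribution $(-\partial_{\bar x} \psi_1^{(0)}, \, \partial_{\bar x} \psi_2^{(0)})^\transpose$ at the critical harmonics already at order $r$, reflecting the fact that the two critical modes travel with nonzero and opposite group velocities. A naive solvability condition at $\nu = 1$ would force $\partial_{\bar x} A_{1,2} \equiv 0$, which is incorrect; the right treatment is to absorb these terms by letting $\bar x$ play the role of a moving slow coordinate (as the frames $\delta(x \pm t)$ do in the static ansatz \eqref{eq:approximation_ks}), so that the leading amplitude equations acquire the first-order transport terms $\mp \partial_{\bar x} A_{1,2}$ of \eqref{eq:modulation_eqn_global_ii}. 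Moreover, the mixed quadratic products such as $\psi_1^{(0)} \psi_2^{(0)}$ carry fast-time oscillation ($\me^{\pm 2 i t}$) and would, if left in place, feed back into the critical harmonics at higher order; pushing these contributions to higher order requires the normal-form and averaging arguments of \cite{Schneider1997} (cf.\ Remark \ref{rem:averaging}). The solvability conditions \eqref{eq:solvability_ks}, which retain only the $(k_1, k_2) = (1,-1)$ and $(1,1)$ harmonics of $B_1^{(2)}$ and $B_2^{(2)}$, encode the outcome of this reduction; making the equivalence fully rigorous is exactly the part deferred to Step (III) and not attempted here.

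Granting this, the remaining steps are bookkeeping. At $\nu = 2$ one has $\bm B^{(2)} = -(\partial_{\bar t} + r^{-1} \partial_{\bar t} r) \bm \psi^{(0)} + \mathcal L^{(1)} \bm \psi^{(1)} + \mathcal L^{(2)} \bm \psi^{(0)} + \mathcal N^{(2)}$, and extracting the $\me^{i(x-t)}$-coefficient of $B_1^{(2)}$ (and, symmetrically, the $\me^{i(x+t)}$-coefficient of $B_2^{(2)}$) and setting it to zero produces, term by term: $\partial_{\bar t} A_1$ and $-r(\bar t)^{-1} \partial_{\bar t} r(\bar t)\, A_1$ from the first summand; $4 \partial_{\bar x}^2 A_1$ from $\mathcal L^{(2)} \bm \psi^{(0)}$ (using $(1 + 3 \partial_x^2)\me^{i(x-t)} = -2\me^{i(x-t)}$); the transport term $-\partial_{\bar x} A_1$ from $\mathcal L^{(1)}$; the linear term $\bar \mu(\bar t) A_1$ from the $\mu \bm u$ part of $\mathcal N^{(2)}$; and the cubic terms $-\gamma_1 |A_1|^2 A_1 - \gamma_2 A_1 |A_2|^2$ from the resonant products of $\bm \psi^{(0)}$ with the second-harmonic parts of $\bm \psi^{(1)}$. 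This is \eqref{eq:modulation_eqn_global_ii}. Relabelling $A_L(\bar x_L, \bar t) := A_1$, $A_R(\bar x_R, \bar t) := A_2$ with $\bar x_L = \bar x - \bar t$, $\bar x_R = \bar x + \bar t$ and applying the chain rule removes the first-order transport terms and yields \eqref{eq:modulation_eqn_ks_2}. Finally, substituting the chart coordinates \eqref{eq:coordinates} --- so that $\bar \mu = -1$ and $r^{-1} \partial_{\bar t} r = -\tfrac{1}{2}\eps_1$ in $\mathcal K_1$, $\bar \mu = \mu_2$ and $r^{-1} \partial_{\bar t} r = 0$ in $\mathcal K_2$, $\bar \mu = 1$ and $r^{-1} \partial_{\bar t} r = \tfrac{1}{2}\eps_3$ in $\mathcal K_3$, using \eqref{eq:K1_eqns}--\eqref{eq:K3_eqns} --- gives the three chart systems, with $\eps_1(t_1)$, $\mu_2(t_2)$ and $\eps_3(t_3)$ as in \eqref{eq:ode_solutions}.
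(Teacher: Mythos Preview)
Your proposal is correct and follows essentially the same route as the paper: expand $\Lop$ via Lemma \ref{lem:L}, solve \eqref{eq:matching_eqn} at $\nu=0,1$, impose \eqref{eq:solvability_ks} at $\nu=2$, then pass to the co-moving frames and the charts via \eqref{eq:coordinates}. The main bookkeeping difference is in how the transport term $\mp\partial_{\bar x}A_{1,2}$ is accounted for: you flag it explicitly as a resonant obstruction in $\mathcal L^{(1)}\bm\psi^{(0)}$ at $\nu=1$ that must be absorbed into the leading amplitude equation (or equivalently into the moving frame), whereas the paper includes free homogeneous pieces $v_0\psi_1^{(0)}+v_1\partial_{\bar x}A_1\me^{i(x-t)}$ in the $\nu=1$ solution and then recovers the coefficient $c=v_0+v_1$ at $\nu=2$, fixing $c=1$ a posteriori via the dispersion relation. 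Both are standard multiple-scales bookkeeping devices leading to the same system \eqref{eq:modulation_eqn_global_ii}; your treatment of the averaging issue for the $\me^{\pm 2it}$ products is also in line with the paper's Remark \ref{rem:averaging_2}.
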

		
		\begin{proof}
			At $\nu = 1$ we have
			\begin{equation}
				\label{eq:nu1_ks}
				(\partial_t - \mathcal L^{(0)}) \bm \psi^{(1)} = \bm B^{(1)} = \mathcal L^{(1)} \bm \psi^{(0)} + \mathcal N^{(1)} ,
			\end{equation}
			where
			\[
			\mathcal L^{(1)} = 
			\begin{pmatrix}
				- 1 & 0 \\
				0 & 1
			\end{pmatrix}
			\partial_{\bar x} - 4 \mathbb{I}_2 \partial_x \partial_{\bar x} (1 + \partial_x^2)  , \quad 
			\mathcal N^{(1)} = \partial_x \left( {\psi_1^{(0)}}^2 + \psi_1^{(0)} \psi_2^{(0)} + {\psi_2^{(0)}}^2 \right)
			\begin{pmatrix}
				1 \\
				1
			\end{pmatrix} .
			\]
			Solutions for $\bm \psi^{(1)} = (\psi_1^{(1)}, \psi_2^{(1)})^\transpose$ have the form
			\[
			\begin{split}
				\psi_1^{(1)} = v_0 \psi_1^{(0)} + v_1 \partial_{\bar x} A_1 \me^{i(x-t)} + v_2 A_1^2 \me^{2i(x-t)} + v_3 A_1 A_2 \me^{2ix} + v_4 A_2^2 \me^{2i(x+t)} + c.c. , \\
				\psi_2^{(1)} = \eta_0 \psi_2^{(0)} + \eta_1 \partial_{\bar x} A_2 \me^{i(x+t)} + \eta_2 A_1^2 \me^{2i(x-t)} + \eta_3 A_1 A_2 \me^{2ix} + \eta_4 A_2^2 \me^{2i(x+t)} + c.c. ,
			\end{split}
			\]
			where the coefficients $v_i, \eta_i \in \mathbb C$, $i = 1, 2, 3, 4$ can be determined by substitution the above into equation \eqref{eq:nu1_ks} if necessary.
			
			\
			
			It remains to apply the solvability condition \eqref{eq:solvability_ks}. We have
			\[
			\bm B^{(2)} = -(\partial_{\bar t} + r^{-1} \partial_{\bar t} r) \bm \psi^{(0)} + \mathcal L^{(1)} \bm \psi^{(1)} + \mathcal L^{(2)} \bm \psi^{(0)} + \mathcal N^{(2)} ,
			\]
			where $\mathcal L^{(1)}$, $\bm \psi^{(0)}$ and $\bm \psi^{(1)}$ are as given above,
			\[
			\mathcal L^{(2)} = - 2 \mathbb I_2 \partial_{\bar x}^2 (1 + 3 \partial_x^2)  ,
			\]
			and
			\[
			\begin{split}
				\mathcal N^{(2)} &= \bar \mu \bm \psi^{(0)} + \bigg( \partial_{\bar x} \left( {\psi_1^{(0)}}^2 + \psi_1^{(0)} \psi_2^{(0)} + {\psi_2^{(0)}}^2 \right) \\
				&+ \partial_x \left( \psi_1^{(0)} \psi_1^{(1)} + \psi_1^{(0)} \psi_2^{(1)} + \psi_1^{(1)} \psi_2^{(0)} + \psi_2^{(0)} \psi_2^{(1)} \right) \bigg) 
				\begin{pmatrix}
					1 \\
					1
				\end{pmatrix} .
			\end{split}
			\]
			The solvability condition \eqref{eq:solvability_ks} 
			requires that the $\me^{i(x-t)}$ and $\me^{i(x+t)}$ coefficients in the $B_1^{(2)}$ and $B_2^{(2)}$ equations vanish, respectively. Collecting like terms and imposing this constraint leads to a coupled system of generalised complex GL type equations:
			\[
			\begin{split}
				\partial_{\bar t} A + c \partial_{\bar x} A &= 
				4 \partial_{\bar x}^2 A + (\bar \mu - r^{-1} \partial_{\bar t} r) A + i(v_2 + \eta_2) |A|^2 A + i (v_3 + \eta_3) A |B|^2, \\
				\partial_{\bar t} B - c \partial_{\bar x} B &= 
				4 \partial_{\bar x}^2 B + (\bar \mu - r^{-1} \partial_{\bar t} r) B + i(v_2 + \eta_2) |B|^2 B + i (v_3 + \eta_3) B |A|^2, 
			\end{split}
			\]
			where $c = v_0 + v_1$. In fact, $c$ is the group velocity (see e.g.~\cite{Frohoff2023}), which we can calculate directly using the dispersion relation $\omega(\xi) = i - \xi -2i \xi^2 + i \xi^4$ and
			\[
			c = \partial_\xi \omega(\xi) |_{\xi = \xi_c = 1} = 1 .
			\]
			Setting $\gamma_1 := - i(v_2 + \eta_2)$ and $\gamma_2 := -i (v_3 + \eta_3)$ yields system \eqref{eq:modulation_eqn_global_ii}. The corresponding system in terms of left and right travelling wave amplitude $A_L(\bar x_L,\bar t)$ and $A_R(\bar x_R,\bar t)$ follow directly from this expression, and the equations in charts can be obtained directly using \eqref{eq:coordinates}.
		\end{proof}

		Theorem \ref{thm:modulation_equations_ks} implies that solutions $\bm u(x,t)$ to the coupled Kuramoto-Sivashinsky system \eqref{eq:dynamic_ks} are formally approximated by
		\[
		\bm u(x, t) = r
		\begin{pmatrix}
			A_1(\bar x, \bar t) \me^{i(x - t)} + c.c. \\
			A_2(\bar x, \bar t) \me^{i(x + t)} + c.c.
		\end{pmatrix} 
		+ O(r^2) ,
		\]
		where $A_1(\bar x, \bar t), A_2(\bar x, \bar t) \in \mathbb C$ satisfy the system of generalised complex GL equations \eqref{eq:modulation_eqn_global_ii}. Similar observations apply to the comparison of the static and dynamic modulation equations \eqref{eq:modulation_eqn_static_ks} and \eqref{eq:modulation_eqn_ks_2}, respectively. Specifically, the dynamic modulation equations \eqref{eq:modulation_eqn_ks_2}, although in the same general form, are distinguished by the fact that they are posed in the blown-up space, they depend on desingularized space $\bar x$ and time $\bar t$, and that the linear coefficient depends on $\bar t$.
		
		\begin{remark}
			\label{rem:averaging_2}
			In the static setting, normal form transformations and averaging theory are used in order to show that highly oscillatory terms can be neglected; recall Remark \ref{rem:averaging}. These arguments can be used in order to show that the (static) solvability condition has a form analogous to \eqref{eq:solvability_ks}. The validity of the modulation equations in Theorem \ref{thm:modulation_equations_ks} is contingent upon the success of these arguments in the fast-slow setting, which we do not attempt to reproduce in the present work.
		\end{remark}
		
		\begin{remark}
			\label{rem:hyperbolicity_3}
			The modulation equations in charts have improved hyperbolicity properties. Specifically, an analogue of Remarks \ref{rem:hyperbolicity} and \ref{rem:hyperbolicity_2} applies.
		\end{remark}
		
		\subsection{(M4) Modulation equations}
		
		The relevant blow-up transformation in this case is given by \eqref{eq:general_blow-up}-\eqref{eq:general_desingularization} with
		\begin{equation}
			\label{eq:blow-up_iii}
			\bm u(\bm x, t) = 
			\begin{pmatrix}
				\bm U'(\bm x, t) \\
				p(\bm x)
			\end{pmatrix}
			=
			r(\bar t) \bm \psi(\bar{\bm x}, \bar t) ,
		\end{equation}
		$\mu = \mathcal R'$, $\beta = 4$ and $\bar{\bm x} = (\bar x, y)$. Note that the $y$-coordinate has not been desingularized, and that we still have periodic boundary conditions for $\bm \psi$ at $y \in \partial \widetilde{\Omega} = \{\pm \pi\}$. The structure of the equations obtained after expanding in powers of $r$ will vary slightly to \eqref{eq:matching_eqn} due to the additional constraints \eqref{eq:ns_constraints}. In the present case, applying the blow-up transformation defined above to equation \eqref{eq:psi_eqn} leads to
		\begin{equation}
			\label{eq:ns_blown_up}
			\partial_{\bar t} \bm \psi = r^{-4} \left(\M + \Lop \right) \bm \psi - r^{-1} \bm \psi \partial_{\bar t} r + r^{-5} \N(r \bm \psi, r^2 \bar \mu, r^6 \bar \eps) , 
		\end{equation}
		where $\bm \psi = (\bar u, \bar v, \bar p)^\transpose$ and $\M$, $\Lop$ and $\N$ given by \eqref{eq:LM_ns}-\eqref{eq:N_ns}. We also obtain the following (blown-up) incompressibility and zero mean flow constraints
		\begin{equation}
			\label{eq:ns_constraints_blown_up}
			r \partial_{\bar x} \bar u + \partial_y \bar v = 0 , \qquad
			[\bar u]_{\widetilde \Omega} = \frac{1}{| \widetilde \Omega |} \int_{\widetilde	 \Omega} \bar u(\bar x, y) dy = 0 ,
		\end{equation}
		as a consequence of the corresponding conditions in \eqref{eq:ns_constraints}.
		
		The aim is to obtain a modulation equation which governs the leading order asymptotic approximation for the velocity field $\bm{\bar U} = (\bar u, \bar v)^\transpose$, i.e.~for the first two components of $\bm \psi$. We introduce the following componentwise notation for the series expansion for $\bm \psi = (\bar{\bm U}, \bar p)$ defined in \eqref{eq:psi_series}:
		\begin{equation}
			\label{eq:ns_psi_expansion}
			\bm{\bar U}(\bar x, y, \bar t) = 
			\sum_{j=0}^\infty \bm U^{(j)}(\bar x, y, \bar t) r(\bar t)^j , \qquad
			\bar p(\bar x, y) = r(\bar t) \sum_{j=0}^\infty p^{(j)}(\bar x, y) r(\bar t)^j , 
		\end{equation}
		cf.~the static expansions \eqref{eq:approximation_ns}. We shall also write 
		$\bm U^{(j)} = (u^{(j)}, v^{(j)})^\transpose$ for each $j \in \mathbb N$. The aim, in this notation, is to find a closed form (modulation) equation for $\bm U^{(0)}(\bar x, y, \bar t)$. Calculations in the proof of Theorem \ref{thm:modulation_equations_ns} below show that
		\begin{equation}
			\label{eq:U0_ns}
			\bm U^{(0)} (\bar x, y, \bar t) = A(\bar x, \bar t) \bm \varphi ,
		\end{equation}
		where $A(\bar x, \bar t) \in \R$ and $\bm \varphi = (- \sqrt 2 \cos y, 1)^\transpose$ is the eigenvector associated to the eigenvalue $\lambda(0,0) = 0$ at criticality. The solvability condition for $A(\bar x, \bar t)$ can be formulated in terms of the residual, which we write as a power series in $r$:
		\[
		\begin{split}
			r^{-1} \textup{Res} \left(r \bm \psi \right) &= 
			- r^{-1} \partial_t (r(\bar t) \bm \psi(\bar{\bm x}, \bar t)) - \left(\M + \Lop \right) \bm \psi(\bar{\bm x}, \bar t) + r^{-1} \N \left(r \bar{\bm U}, r \bar p, r^2 \bar{\mathcal R}, r^6 \bar \eps \right) \\
			&= \sum_{k=0}^\infty \mathcal Q^{(k)}(\bar{\bm x}, \bar t) r(\bar t)^k ,
		\end{split}
		\]
		where $\mathcal Q^{(k)} = (\mathcal Q_1^{(k)}, \mathcal Q_2^{(k)}, 0)^\transpose$ for each $k \in \mathbb N_+$ and we write $\mu = \mathcal R' = r^2 \bar \mu = r^2 \bar{\mathcal R}$. A solvability condition can be imposed at $\nu = 3$ on the $v^{(0)}$ coordinate. This is sufficient to determine $u^{(0)}$ and thus also $\bm U^{(0)}$ using \eqref{eq:U0_ns}. We require that
		\begin{equation}
			\label{eq:solvability_ns}
			\mathcal Q_2^{(3)}(\bar{\bm x}, \bar t) \equiv 0 .
		\end{equation}
		Imposing \eqref{eq:solvability_ns} leads to the following result.
		
		
		\begin{thm}
			\label{thm:modulation_equations_ns}
			Consider system \eqref{eq:ns_blown_up} with the constraints \eqref{eq:ns_constraints_blown_up}. A necessary condition for the solvability condition \eqref{eq:solvability_ns} to be satisfied is that $A(\bar x, \bar t) \in \R$ satisfies the following modulation equation of Cahn-Hilliard type:
			\begin{equation}
				\label{eq:modulation_eqn_global_iii}
				\partial_{\bar t} A = - r(\bar t)^{-1} \partial_{\bar t} r(\bar t) A - 3 \partial_{\bar x}^4 A - \sqrt 2 \bar{\mathcal R}(\bar t) \partial_{\bar x}^2 A + \frac{2}{3} \partial_{\bar x}^2 \left( A^3 \right) .
			\end{equation}
			This leads to the following modulation equations in charts $\mathcal K_l$:
			\begin{equation}
				\label{eq:modulation_eqn_i_charts_iii}
				\begin{aligned}
					\mathcal K_1 : \ \ & \partial_{t_1} A_1 = \frac{\eps_1(t_1)}{2} A_1 - 3 \partial_{x_1}^4 A_1 + \sqrt 2 \partial_{x_1}^2 A_1 + \frac{2}{3} \partial_{x_1}^2 \left( A_1^3 \right) , \\
					\mathcal K_2 : \ \ & \partial_{t_2} A_2 = - 3 \partial_{x_2}^4 A_2 - \sqrt 2 \mathcal R_2(t_2) \partial_{x_2}^2 A_2 + \frac{2}{3} \partial_{x_2}^2 \left( A_2^3 \right) , \\
					\mathcal K_3 : \ \ & \partial_{t_3} A_3 = - \frac{\eps_3(t_1)}{2} A_3 - 3 \partial_{x_3}^4 A_3 - \sqrt 2 \partial_{x_3}^2 A_3 + \frac{2}{3} \partial_{x_3}^2 \left( A_3^3 \right) ,
				\end{aligned}
			\end{equation}
			where $\eps_1(t_1)$, $\mathcal R_2(t_2)$ and $\eps_3(t_3)$ are given by \eqref{eq:ode_solutions} with $\mu_2 = \mathcal R_2$.
		\end{thm}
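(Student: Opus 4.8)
The plan is to follow the same scheme as in the proofs of Theorems~\ref{thm:modulation_equations_sh}--\ref{thm:modulation_equations_ks}, modified to accommodate the constraints \eqref{eq:ns_constraints_blown_up} and the pressure. First I would substitute the ansatz \eqref{eq:blow-up_iii} together with the componentwise expansions \eqref{eq:ns_psi_expansion} into the blown-up equation \eqref{eq:ns_blown_up} and the constraints \eqref{eq:ns_constraints_blown_up}, expand the operator $\M+\Lop$ in powers of $r$ via Lemma~\ref{lem:L} (the expansion is short because only $\bar x$, not $y$, is desingularized and $\Lop$ has order $m=2$), and collect like powers of $r$ to obtain a recursive hierarchy analogous to \eqref{eq:matching_eqn}, supplemented by the order-by-order incompressibility relations $\partial_{\bar x}u^{(j-1)}+\partial_y v^{(j)}=0$ and the zero mean flow conditions $[u^{(j)}]_{\widetilde\Omega}=0$. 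At leading order the $y$-independence of $v^{(0)}$ (forced by incompressibility) together with the $y$-ODEs $\partial_y^2 v^{(0)}=0$ and $\partial_y^2 u^{(0)}=\mathcal R^\ast\cos y\,v^{(0)}$, solved under periodicity in $y$ and zero mean flow, gives $v^{(0)}=A(\bar x,\bar t)$ and $u^{(0)}=-\sqrt2\,A\cos y$, i.e.~\eqref{eq:U0_ns} with $\bm\varphi=(-\sqrt2\cos y,1)^\transpose$; this is the blown-up counterpart of the leading order computation in \cite{Nepomniashchii1976,Schneider1999b}.

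Next I would solve the hierarchy recursively for $\bm U^{(1)},\bm U^{(2)}$ and the pressures $p^{(0)},p^{(1)}$. At each order this amounts to inverting a Stokes-type operator in $y$ (the $\partial_y^2$ blocks together with the coupling through $\mathcal R^\ast\cos y$ and the pressure gradient) subject to periodicity in $y$: incompressibility determines $v^{(j)}$ up to a $y$-independent ``mean-flow'' function of $(\bar x,\bar t)$, the first momentum component then fixes $u^{(j)}$ up to a $y$-independent function pinned down by zero mean flow, and the second momentum component determines the pressure up to a solvability condition -- the vanishing of a $y$-average -- which is used to eliminate the mean-flow function introduced at the previous order, exactly as in the static analysis of \cite{Nepomniashchii1976}. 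The only point requiring vigilance here is to retain the parts of $\bm U^{(1)},\bm U^{(2)}$ proportional to $A^2$ and to $A\,\partial_{\bar x}A$, produced by the quadratic Navier--Stokes nonlinearity acting on $\bm U^{(0)}$ and its $\partial_{\bar x}A$-correction, since these are precisely what feed back into the cubic term $\tfrac23\partial_{\bar x}^2(A^3)$ of the modulation equation.

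With the lower orders in hand I would impose the solvability condition \eqref{eq:solvability_ns} on the $v$-component. Expanding $\mathcal Q_2^{(3)}$ in terms of $\bm\psi^{(0)}$ (which carries the $\partial_{\bar t}A$ and $-r^{-1}\partial_{\bar t}r\,A$ contributions, recall that $r^{-1}\partial_{\bar t}r$ is $O(1)$ by the remark following \eqref{eq:K3_eqns}), $\bm\psi^{(1)},\bm\psi^{(2)}$, the operators $\mathcal L^{(q)}$ and the nonlinear terms, using the incompressibility constraint at the relevant order to trade $\partial_y v^{(\nu)}$ for $\partial_{\bar x}u^{(\nu-1)}$, and then taking the $y$-average (the projection that eliminates the pressure contribution), all the contributions collect into the divergence-form equation \eqref{eq:modulation_eqn_global_iii} of Cahn--Hilliard type; since requiring $\mathcal Q_2^{(3)}\equiv0$ for all $y$ is strictly stronger than the vanishing of this $y$-average, the equation \eqref{eq:modulation_eqn_global_iii} is obtained as a \emph{necessary} condition. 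The chart equations \eqref{eq:modulation_eqn_i_charts_iii} then follow by inserting the coordinate relations \eqref{eq:coordinates} with $\mu=\mathcal R'=r^2\bar{\mathcal R}$ and the explicit solutions \eqref{eq:ode_solutions}; in particular $-r^{-1}\partial_{\bar t}r$ becomes $\tfrac12\eps_1$ in $\mathcal K_1$ and $-\tfrac12\eps_3$ in $\mathcal K_3$, while $\bar{\mathcal R}$ takes the values $\mp1$ in $\mathcal K_{1,3}$, exactly as for the earlier model problems.

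I expect the main obstacle to be the correct treatment of the incompressibility/zero mean flow constraints together with the pressure Lagrange multiplier: in contrast to (M1)--(M3), the leading operator here is a (singularly perturbed) Stokes-type operator whose cokernel must be identified precisely so that the solvability conditions are imposed on the right scalar quantity, and it is the underlying conservation-law structure that both shifts the order at which the amplitude equation appears and forces its right-hand side into divergence form. A secondary difficulty is the bookkeeping of the quadratic self-interactions through two orders, which is the sole origin of the cubic nonlinearity.
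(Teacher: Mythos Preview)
Your proposal is correct and follows essentially the same route as the paper: both set up the recursive hierarchy with the order-by-order incompressibility and zero mean flow constraints, solve the $\nu=0,1,2$ levels by reducing to the static computations of \cite{Nepomniashchii1976}, and then impose the solvability condition at $\nu=3$ on the $v$-component (via the $y$-average that guarantees periodicity of $p^{(3)}$) to obtain \eqref{eq:modulation_eqn_global_iii}, with the chart equations following from \eqref{eq:coordinates}. Your explicit remark that the $y$-average condition is weaker than $\mathcal Q_2^{(3)}\equiv 0$, and hence yields \eqref{eq:modulation_eqn_global_iii} only as a \emph{necessary} condition, is a nice clarification of the theorem's phrasing.
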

		
		\begin{proof}
			We first need to solve the equations
			\begin{equation}
				\label{eq:ns_matching_eqns}
				\partial_{\bar x} u^{(\nu)} + \partial_y v^{(\nu + 1)} = 0 , \qquad
				\mathcal Q_1^{(\nu)} = 0 , \qquad 
				\mathcal Q_2^{(\nu)} = 0 , 
			\end{equation}
			recursively for $\nu = 0, 1, 2$, starting with $\nu = 0$ and $\partial_y v^{(0)} = 0$ (this equation and the left-most equation above follow from the incompressibility condition in \eqref{eq:ns_constraints_blown_up}). Notice that the right-hand side in \eqref{eq:blow-up_iii} has no explicit dependence on 
			$x$, and that the right-hand side in \eqref{eq:dynamic_ns} is translation invariant in $x$. It therefore suffices to make the replacements
			\[
			\partial^l_{x} \mapsto r^l \partial^l_{\bar x}, 
			\]
			for $l \in \mathbb N_+$ accordingly. Following this, the incompressibility condition in \eqref{eq:ns_constraints_blown_up} and the equations for the individual components in \eqref{eq:ns_blown_up} can be written as
			\begin{equation}
				\label{eq:ns_matching_eqns_2}
				\begin{split}
					\partial_{y} \bar v &= - r \partial_{\bar x} \bar u , \\
					\partial_{y}^2 \bar u &= \sqrt 2 \cos y \bar v + \left( \sqrt{2} \sin y \partial_{\bar x} \bar u + \bar v \partial_y \bar u \right) r +
					\left( \bar u \partial_{\bar x} \bar u - \partial_{\bar x}^2 \bar u + \partial_{\bar x} \bar p + \bar{\mathcal R} \bar v \cos y \right) r^2 \\
					& + \bar{\mathcal R} \sin y \partial_{\bar x} \bar u r^3 +
					\left( \partial_{\bar t} \bar u + r^{-1} \bar u \partial_{\bar t} r \right) r^4 + \bar \eps \sin y r^5 , \\
					\partial_y \bar p &= - \partial^2_{\bar x y} \bar u - \sqrt 2 \sin y \partial_{\bar x} \bar v - \bar v \partial_y \bar v + 
					\left( - \bar u \partial_{\bar x} \bar v + \partial_{\bar x}^2 \bar v \right) r - \bar{\mathcal R} \sin y \partial_{\bar x} \bar v r^2 \\
					&- \left( \partial_{\bar t} \bar v + \bar v r^{-1} \partial_{\bar t} r \right) r^3 .
				\end{split}
			\end{equation}
			These equations have a very similar structure to the equations obtained in the static calculations, cf.~\cite[eqn~(2.3)]{Nepomniashchii1976}, except that we obtain additional terms with factors $r^{-1} \partial_{\bar t} r$ due to the time-dependent desingularization $\partial_t = r^4 \partial_{\bar t}$ (as opposed to the simple rescaling $T = \delta^4 t$). The idea is to obtain explicit formulae for the conditions in \eqref{eq:ns_matching_eqns} by substituting the series expansions for $\bar u$, $\bar v$ and $\bar p$ in \eqref{eq:ns_psi_expansion} into equations \eqref{eq:ns_matching_eqns_2} and matching in powers of $r$. Since the `new terms' do not appear until $O(r^3)$, however, the calculations for $\nu = 0, 1, 2$ agree with those in \cite{Nepomniashchii1976}. We therefore omit the details for $\nu = 1, 2$. The details for $\nu = 0$ will however be included, in order to justify the expression \eqref{eq:U0_ns}.
			
			
			
			At $\nu = 0$ we obtain
			\[
			\partial_y v^{(0)} = 0, \quad 
			\partial_y^2 u^{(0)} = \sqrt 2 \cos y v^{(0)} , \quad
			\partial_y p^{(0)} = - \partial^2_{\bar x y} u^{(0)} - \sqrt 2 \sin y \partial_{\bar x} v^{(0)} ,
			\]
			where we eliminated the $v^{(0)} \partial_y v^{(0)}$ term in the last equation using the first equation. Solving these equations yields
			\[
			v^{(0)} = A(\bar x, \bar t), \quad 
			u^{(0)} = - \sqrt 2 \cos y A + C_1(\bar x, \bar t), \quad 
			p^{(0)} = 2 \sqrt 2 \sin y \partial_{\bar x} A + C_2(\bar x, \bar t) ,
			\]
			for some (presently unknown) functions $A(\bar x, \bar t)$, $C_1(\bar x, \bar t)$ and $C_2(\bar x, \bar t)$. In order to obtain a leading order approximation, we need to determine an equation for $A$. This suffices to describe both $u^{(0)}$ and $v^{(0)}$, because the zero mean flow condition in \eqref{eq:ns_constraints_blown_up} gives
			\[
			\int_{-\pi}^{\pi} u^{(0)}(\bar x, y, \bar t) dy = 
			2 \pi C_1(\bar x, \bar t) = 0 \qquad 
			\implies \qquad C_1(\bar x, \bar t) = 0.
			\]
			We need to go to higher orders in order to apply the relevant solvability condition.
			
			\
			
			The modulation equation for $A$ is obtained at $\nu = 3$ via the solvability condition associated to the equation $\mathcal Q_2^{(3)} = 0$, which can be written as
			\begin{equation}
				\label{eq:p3_eqn}
				\begin{split}
					\partial_y p^{(3)} &= - \partial^2_{\bar x y} u^{(3)} - \sqrt{2} \sin y \partial_{\bar x} v^{(3)} - v^{(0)} \partial_{\bar x} v^{(2)} - 3 v^{(2)} \partial_{\bar x} v^{(1)} - v^{(3)} \partial_{\bar x} v^{(0)} \\
					&- u^{(0)} \partial_{\bar x} v^{(2)} - 2 u^{(1)} \partial_{\bar x} 	v^{(1)} - u^{(2)} \partial_{\bar x} v^{(0)} + \partial_{\bar x}^2 v^{(2)} - \bar{\mathcal R} \sin y \partial_{\bar x} v^{(1)} \\
					&- \left( \partial_{\bar t} v^{(0)} + v^{(0)} r^{-1} \partial_{\bar t} r \right) .
				\end{split}
			\end{equation}
			The functions $u^{(k)}$ and $v^{(k)}$ with $k = 1,2,3$ can be written entirely in terms of lower order terms $u^{(j)}, v^{(j)}$ and $p^{(j)}$ with $j < k$. This implies that they too have the same form as in the static derivations in \cite{Nepomniashchii1976}, since $j \leq 2$. In fact, the only difference in the form of equation \eqref{eq:p3_eqn} when compared with the corresponding static equation is that the term $\partial_{\bar t} v^{(0)} + v^{(0)} r^{-1} \partial_{\bar t} r$ appears instead of $\partial_T v^{(0)}$. We therefore obtain the same requirement on $v^{(0)}$, except that $\partial_T v^{(0)}$ is replaced by $\partial_{\bar t} v^{(0)} + v^{(0)} r^{-1} \partial_{\bar t} r$, $X$ is replaced by $\bar x$, $T$ is replaced by $\bar t$ and $\bar{\mathcal R}$ depends on $\bar t$. Namely, we obtain
			%
			\[
			\partial_{\bar t} v^{(0)} + v^{(0)} r^{-1} \partial_{\bar t} r = - 3 \partial^4_{\bar x} v^{(0)} - \sqrt 2 \bar{\mathcal R}(\bar t) \partial^2_{\bar x} v^{(0)} + \frac{2}{3} \partial^2_{\bar x} \left( {v^{(0)}}^3 \right) ,
			\]
			cf.~\cite[eqn.~(2.4)]{Nepomniashchii1976}. Substituting the expressions for $u^{(0)}$ and $v^{(0)}$ obtained above shows that the leading order approximation is
			\[
			\bar{\bm U}^{(0)}(\bar x, \bar t) = A(\bar x, \bar t) \bm \varphi ,
			\]
			where $\bm \varphi = (- \sqrt 2 \cos y, 1)^\transpose$ and $A(\bar x,\bar t) \in \R$ obeys the Cahn-Hilliard type equation \eqref{eq:modulation_eqn_global_iii}, as required. As before, the equations in charts $\mathcal K_l$ are obtained directly from equation \eqref{eq:modulation_eqn_global_iii} using the coordinate formulae \eqref{eq:coordinates}.
		\end{proof}
		
		Theorem \ref{thm:modulation_equations_ns} implies that the velocity field in system \eqref{eq:dynamic_ns} formally approximated by
		\[
		\bm U'(x,y,t) = r A(\bar x, \bar t) \bm \varphi + O(r^2) ,
		\]
		where $A(\bar x, \bar t) \in \R$ satisfies the Cahn-Hilliard equation \eqref{eq:modulation_eqn_global_iii}. In contrast to the static Cahn-Hilliard equation \eqref{eq:modulation_eqn_static_ch}, the slow parameter drift $\dot{\mathcal R} = \eps$ in \eqref{eq:dynamic_ns} leads to an additional spatially homogeneous, $A$-linear term with a time-dependent coefficient in \eqref{eq:modulation_eqn_global_iii}. This causes solutions to approach and move away from the blow-up surface in charts $\mathcal K_1$ and $\mathcal K_3$, respectively.
		
		\begin{remark}
			\label{rem:hyperbolicity_4}
			The modulation equations in charts $\mathcal K_l$ are again expected to exhibit `improved' hyperbolicity properties in comparison to the original problem \eqref{eq:dynamic_ns}. However, in contrast to the modulation equations obtained for the model problems (M1)-(M3), all of which were of GL type, the spectrum obtained after linearising around the relevant stationary states in charts $\mathcal K_1$ and $\mathcal K_3$ still have continuous curves which intersect the origin. Consider for example the spectrum obtained after linearisation along the set of steady states $\mathcal C_1 = \{A_1 = 0, \eps_1 = 0, r_1 \geq 0\}$ in chart $\mathcal K_1$. It is comprised of two zero eigenvalues and a continuous half-line due to $\lambda_{A_1} = - (\sqrt 2 + 3 \xi^2) \xi^2 \leq 0$, and is therefore only marginally stable (there is no spectral gap). This additional complication is a consequence of the incompressibility condition which, as we noted in Section \ref{sec:dynamic_Turing_instability}, can be viewed as a conservation law. A more refined account of the sense in which the blown-up equations \eqref{eq:modulation_eqn_i_charts_iii} have `improved hyperbolicity properties', and how such properties could be utilised to study the dynamics, is left for future work.
		\end{remark}

		\section{Conclusion and outlook}
		\label{sec:conclusion_and_outlook}
		
		In the absence of a rigorous theory for the existence of center or slow manifolds near dynamic bifurcations with continuous spectra, a method for obtaining formally correct modulation equations can be viewed as the first step towards a rigorous understanding of the dynamics. The primary contribution of this article was to provide a formal but systematic approach to achieving this first step, and to apply it to particular model problems featuring dynamic Turing, Hopf, Turing-Hopf and long-wave bifurcations. In order to do this, we extended and generalised the `formal part' of classical modulation theory, as summarised by Steps (I)-(II) in Section \ref{sec:introduction}, to the fast-slow setting. Fast-slow counterparts to Steps (I) and (II) were proposed in Sections \ref{sub:geometric_blow_up} and \ref{sub:method_of_multiple_scales} respectively, and formulated for the general class of PDE systems \eqref{eq:dynamic_general_form}. In order to achieve Step (I), we built upon recent findings in \cite{Jelbart2022} in order to show the classical multi-scale ansatz can be reformulated as a geometric blow-up transformation. In order to achieve Step (II), we coupled the geometric blow-up approach from Step (I) to a fast-slow extension of the classical asymptotic method of multiple scales.
		
		The methods developed in Section \ref{sec:geometric_blow-up_and_the_modulation_equations} can be used to systematically derive modulation equations which govern the dynamics of formal approximations near dynamic bifurcations. This was demonstrated concretely in Section \ref{sec:model_problems}, for the model problems (M1)-(M4) which were introduced in Section \ref{sec:dynamic_Turing_instability}. The modulation equations are given in and described by Theorems \ref{thm:modulation_equations_sh}, \ref{thm:modulation_equations_b}, \ref{thm:modulation_equations_ks} and \ref{thm:modulation_equations_ns}. In each case, we obtained equations with the same general form as their static counterpart (real GL, complex GL, coupled complex GL and Cahn-Hilliard respectively), however with some notable distinguishing features including (i) additional terms and time-dependent coefficients induced by the parameter drift $\dot \mu = \eps$; (ii) dependence on desingularized (as opposed to simply rescaled) space and time $\bar{\bm x}$ and $\bar t$, and (iii) the fact that they are posed in a non-trivial geometry (the blown-up space). Although we did not utilise it in the present work, it is also worthy to emphasise that the blow-up procedure also yielded improved hyperbolicity properties with regard to the equations in local coordinate charts $\mathcal K_l$; see again Remarks \ref{rem:hyperbolicity}, \ref{rem:hyperbolicity_2}, \ref{rem:hyperbolicity_3} and \ref{rem:hyperbolicity_4}. The presence of this important feature, which is well-known and fundamental to the utility and success of geometric blow-up analyses in finite-dimensional systems, provides significant motivation for the continued effort to develop these methods for PDE systems.
		
		\
		
		Finally, although we are confident that the contributions of this article provide a promising foundation for future work in this area, a substantial amount of work remains to be done. In particular, Step (III) must be addressed in the fast-slow setting, and, given that this is possible, detailed dynamical analyses of the modulation equations obtained in Steps (I)-(II) are necessary in order to infer something about the dynamics of the original problem. These non-trivial tasks and other related problems are left for future work.

		

		\bibliographystyle{siam}
		\bibliography{swift_hohenberg}
		
		\appendix
		
		\section{Proof of Lemma \ref{lem:L}}
		\label{app:proof_of_lemma_L}
		
		We need to apply the replacements in \eqref{eq:partial_derivatives} to the definition for $\Lop$ in \eqref{eq:L}. In particular, we need to express the right-hand side of
		\begin{equation}
			\label{eq:D_replacement}
			\Diff_x^\alpha = \frac{\partial^{|\alpha|}}{\partial x_1^{\alpha_1} \partial x_2^{\alpha_2} \cdots \partial x_n^{\alpha_n}} \mapsto 
			\mathcal D_x^\alpha = \prod_{k=1}^p (\partial_{x_k} + r \partial_{\bar x_k})^{\alpha_k} \prod_{l=p+1}^n \partial_{x_l}^{\alpha_l} 
		\end{equation}
		as a power series in $r$. We consider only the more difficult case with $p < n$. The same calculations can be applied in the case that $p = n$ after replacing the right-most product in \eqref{eq:D_replacement} by $1$.
		
		Using the binomial formula we have
		\[
		(\partial_{x_k} + r \partial_{\bar x_k})^{\alpha_k} =
		\sum_{q=0}^{\alpha_k} {\alpha_k \choose q} \partial_{x_k}^{\alpha_k - q} \partial_{\bar x_k}^q r^q =
		\sum_{q=0}^\infty C_{\alpha_k q_k}\left( \partial_{x_k}, \partial_{\bar x_k} \right) r^q ,
		\]
		where we assume that mixed partial derivatives commute (this is valid since we assume sufficient regularity), and the summands $C_{\alpha_k q_k}( \partial_{x_k}, \partial_{\bar x_k})$ are given by \eqref{eq:Calphak}. 
		Using the generalised Cauchy formula to evaluate the product leads to the following power series in $r$:
		\[
		\begin{split}
			\prod_{k=1}^p (\partial_{x_k} + r \partial_{\bar x_k})^{\alpha_k} &= \prod_{k=1}^p \sum_{q_k=0}^\infty 
			C_{\alpha_k q_k}\left( \partial_{x_k}, \partial_{\bar x_k} \right) r^{q_k} \\
			&= \sum_{l=0}^{\infty} \left( \sum_{\substack{q_1, q_2, \ldots , q_p \geq 0 \\ q_1 + q_2 + \cdots + q_p = l}} \prod_{k=1}^p 
			C_{\alpha_k q_k}\left( \partial_{x_k}, \partial_{\bar x_k} \right) \right) r^l .
		\end{split}
		\]
		Substituting this into the right-hand side of \eqref{eq:D_replacement} leads to
		\[
		\mathcal D_x^\alpha = \sum_{l=0}^{\infty} \mathcal D_x^{(\alpha,l)} r^l ,
		\]
		where
		\[
		\mathcal D_x^{(\alpha,l)} := \left( \sum_{\substack{q_1, q_2, \ldots , q_p \geq 0 \\ q_1 + q_2 + \cdots + q_p = l}} \prod_{k=1}^p 
		C_{\alpha_k q_k}\left( \partial_{x_k}, \partial_{\bar x_k} \right) \right) \prod_{s=p+1}^n \partial_{x_s}^{\alpha_s} .
		\]
		Note that this is actually a finite sum due to the defining equation for the $C_{\alpha_k q_k}$, which implies that only the first $m \in \mathbb N$ terms can be nonzero. This follows from the fact that $|\alpha| \leq m$, which implies that the right-hand side of \eqref{eq:D_replacement} terminates at $O(r^m)$. 
		
		Substituting this into
		\[
		\Lop_j = \sum_{|\alpha| \leq m} a_{j}^{(\alpha)}(\bm x) \Diff_{\bm x}^\alpha \mapsto
		\mathcal L_j = 
		\sum_{l=0}^\infty \mathcal L_j^{(l)} r^l
		\]
		shows that
		\[
		\mathcal L_j^{(l)} = \sum_{|\alpha| \leq m} a_{j,\alpha}(\bm x) \mathcal D_x^{(\alpha,l)} ,
		\]
		which is the same as equation \eqref{eq:mathcal_Lj}, as required.
		
		\
		
		The fact that $\mathcal L^{(0)} = \Lop$ (which depends on $\bm x$ but not $\bar{\bm x}$) can be shown directly using the formulas above, and the expression for $\mathcal L_j \psi_j$ in \eqref{eq:Lj_psij} is obtained directly using the Cauchy formula.
		\qed

	\end{document}